\newtheorem{thm}{Theorem}[section]
\newtheorem{theorem}{Theorem}[section]
\newtheorem{lemma}[thm]{Lemma}
\newtheorem{proposition}[thm]{Proposition}
\newtheorem{corollary}[thm]{Corollary}
\theoremstyle{definition}
\theoremstyle{remark}
\newtheorem{remark}[thm]{Remark}
\numberwithin{equation}{section}
\newcommand{\N}{\ensuremath{\mathbb{N}}}
\newcommand{\R}{\ensuremath{\mathbb{R}}}
\DeclareMathOperator{\diverg}{div}
\newcommand{\barg}[1]{\bigl(#1\bigr)}
\newcommand{\Barg}[1]{\Bigl(#1\Bigr)}
\newcommand{\Bbar}[1]{\Bigl|#1\Bigr|}
\newcommand{\bsqb}[1]{\bigl[#1\bigr]}
\newcommand{\Bsqb}[1]{\Bigl[#1\Bigr]}
\newcommand{\scp}[2]{\langle #1,#2 \rangle}
\newcommand{\Bigscp}[2]{\Big\langle #1, #2 \Big\rangle}
\newcommand{\dv}[1]{\,\mathrm{d}#1}
\def\N{{\mathbb N}}
\def\R{{\mathbb R}}
\title{Variational models for the interaction of surfactants with curvature --
existence and regularity of minimizers in the case of flexible curves}
\author{Christopher Brand
\footnote{Fakult\"at f\"ur Mathematik, Universit\"at Regensburg, Universit\"atsstrasse 31, 
93053 Regensburg, Germany}
 \and Georg Dolzmann
\footnotemark[1]
\and Alessandra Pluda
\footnote{Dipartimento di Matematica, Universit\`a di Pisa, Largo Bruno Pontecorvo 5, 56127 Pisa, Italy}
}
\begin{document}

\maketitle

\begin{abstract}
\noindent Existence and regularity of minimizers for a geometric variational problem is shown. The variational integral models an energy contribution of the interface between two immiscible fluids in the presence of surfactants and includes a Helfrich type contribution, a Frank type contribution and a coupling term between the orientation of the surfactants and the curvature of the interface. Analytical results are proven in a one--dimensional situation for curves.
\end{abstract}

\textbf{Keywords}: geometric variational problem, Helfrich energy, surfactants, curvature, existence, regularity.

\section{Introduction}
The importance of surfactants for the formation of interfaces between immiscible fluids has been recognized a long time ago. Even today, a  mathematical analysis of the evolution of such a system driven by the motion of the fluids, the elasticity of the interface, and the interplay of the curvature of the interface with the surfactants and their orientation is not feasible, see, however, \cite{Lengeler} for a model that combines classical membrane elasticity with fluid dynamics, but does not include a director field describing the surfactants. If one does not consider the motion of fluids, the situation is different, as we will explain later in the introduction, listing literature on the Helfrich functional. In this article, we investigate a very specific aspect of such a system, namely the interaction of the orientation of surfactant molecules with the curvature of the interface in a one--dimensional situation, that is, for interfaces that are given by curves in the plane,
a dynamical model for two-dimensional surfaces in three-dimensional space will be investigated in~\cite{BDMP}.
Inspired by Laradji and Mouritsen~\cite{LaradjiMouritsen2000} we study the functional 
\begin{align}\label{EnergyLM}
  E_{LM}(\gamma, \eta) &= \frac{1}{2}\int_\gamma (\kappa + \delta \diverg_\gamma \eta)^2 \dv{s} + \frac{\lambda}{2} \int_\gamma |\nabla_\gamma \eta|^2 \dv{s} + L(\gamma) = E(\gamma, \eta) + L(\gamma)\,,\\\;&\text{with}\; \lambda,\delta\in\mathbb{R}, \lambda>0\,,\nonumber
\end{align}
where the surfactants are modeled by a director field $\eta$; see Section~\ref{notation} for the precise definitions of the curvature $\kappa$ and the differential operators on curves; if $\gamma$ is a simple closed curve, then the functional is the geometric functional defined on the trace of $\gamma$. Thus the term $\diverg_\gamma\eta$ serves as a spontaneous curvature the existence of which has been postulated in models related to lipid bilayers and for the Helfrich model\cite{Helfrich}. Specifically, we focus on the static case and investigate the existence of minimizers of $E_{LM}$, where, in view of applications, one can augment the variational problem with additional constraints on the length of the curve, the enclosed volume, or the length of the surfactants. This case serves as a study for potential equilibrium states for a dynamical system driven by the $L^2$ gradient flow of the system, see Section~\ref{DynamicalProblem} for more information.

Motivated by this functional, it is the scope of this article to begin the analysis of geometric functionals which do not only involve a curve, or, more generally, a manifold $\Gamma$, and objects derived from it like its surface area or its mean curvature, but functionals which combine geometric properties of the manifold with an independent vector field defined on the manifold and which include the interaction of the manifold with this vector field. In our one--dimensional model, the first term contains the curvature $\kappa = - \diverg_\gamma \nu$ and assigns, for $\delta=1$ and in the presence of the constraint $|\eta|=1$, energy to the deviation of the orientation of the surfactants from the normal direction.  The second term is a Frank energy term that is common in models for liquid crystals and structured fluids.

It is important to note that the functional is invariant under some changes of variables and that the terms in the energy have specific scaling properties. The signed curvature $\kappa =\scp{ \partial_{ss}\gamma }{(\partial_s \gamma)^\perp}$ is invariant under orientation preserving changes of variables, and thus
\begin{align}\label{InvarianceReparametrization}
\forall \varphi \in H^2([0,2\pi];[0,2\pi])\text{ diffeo with }\varphi'>0\text{ on }[0,2\pi]\colon E_{LM}(\gamma, \eta) = E_{LM}(\gamma\circ \varphi, \eta\circ \varphi)
\end{align}
where $\varphi:[0,2\pi]\to[0,2\pi]$ is an orientation preserving reparametrization of $\gamma$, see Section~\ref{OperatorsCurves} for details. However, $\kappa$ does change its sign if the change of variables is not orientation preserving. Therefore a change of orientation of $\gamma$ has to be compensated by a change of sign in $\eta$. An alternative formulation of the energy is obtained by choosing a fixed normal field $\nu$ associated to $\Gamma$ and replacing $\kappa$ by $\vec{\kappa}=-(\diverg_\Gamma \nu)\nu$. The last object is again a geometric object that does not change upon any change of coordinates. 

Since the curvature has units of one over length and since the integral has units of length, a scaling argument shows that it is necessary to penalize the length of the curve in order to avoid a dilation to infinity. More precisely, for $R\in \R$, $R>0$,
\begin{align*}
E_{LM}(R \gamma, \eta) = \frac{1}{R}\Bsqb{  \frac{1}{2}\int_\gamma (\kappa + \delta \diverg_\gamma \eta)^2 \dv{s} + \frac{\lambda}{2} \int_\gamma |\nabla_\gamma \eta|^2 \dv{s}}  + R L(\gamma)\,.
\end{align*}
This observation motivates the third term in the energy $E_{LM}$. In the mathematical literature, the modeling and the discussion of variational models involving a coupling between the orientation of surfactants and the curvature of the interface started in~\cite{BartelsDolzmannNochettoM2NA2010,BDRN2012,BrandDolzmannPAMM2019} and was also investigated in~\cite{BrandThesis}.  Liquid crystals on deformable surfaces were also considered in~\cite{NitschkeReutherVoigtLCSurfaces2020}.

In this paper we restrict our attention to the minimization in the class of immersions. In contrast to the evolution problem for the gradient flow of the energy, where a natural initial configuration is given by an embedding and for which the flow stays embedded for a positive time, minimization in the class of embeddings may not lead to an embedded minimizer. Therefore we formulate the minimization in the class $H_{imm}^2$. It is an open problem to characterize the relaxation of $E_{LM}$, that is, to describe all pairs $(\gamma, \eta)\in H_{per}^2 \times H_{per}^1$ that are limits of sequences $(\gamma_k, \eta_k)_{k\in \mathbb{N}}\in H_{emb}^2 \times H_{per}^1$ with uniformly bounded energy.

Our model is inspired by and closely related to models for biological membranes as proposed by Canham~\cite{Canham}
and Helfrich~\cite{Helfrich}.
The Helfrich functional has the general form 
\begin{equation*}\label{Introeq:Helfrich} 
F_H(S)=\int_S \kappa_1 (H-H_0)^2 + \kappa_2 K \,\mathrm{d}\sigma,
\end{equation*}
where $S$ denotes a smooth surface in $\mathbb{R}^3$,
$H$ and  $K$ are the mean curvature and the Gau{\ss} curvature of $S$, respectively, and $\kappa_1,\kappa_2,H_0$ are constants. In particular 
 $\kappa_1$ and $\kappa_2$ are the relevant curvature--elastic moduli and
$H_0$ is the spontaneous curvature, originally introduced to allow for chemically different
sides of the bilayer. In this model,
the shape of the membrane is a minimizer of $F_H$ among a suitable class of surfaces.
In the last decades the study of the Helfrich functional has inspired a lot of work in the mathematical community.

There are several contributions on the minimization problem~\cite{Eichmann2020,Wojtowytsch},
even in the case of more than one surface~\cite{ChoksiVeneroni,ChoksiMorandottiVeneroni, BrazdaLussardiStefanelli}.
Moreover there is no lack of stability results~\cite{BernardWheelerWheeler,ElliottFritzGraham} 
and also the associated Dirichlet boundary value problem has been considered~\cite{DeckelnickDoemelandGrunau,Eichmann2019}.
The Helfrich functional can be interpreted as the singular limit of a suitable 
approximating functional defined on diffuse interfaces~\cite{BellettiniMugnai,Helmers2013,Helmers2015}.  In~\cite{LussardiPeletierRoeger,PeletierRoeger} an interfacial energy arising from the hydrophobic effect is taken into account and it is shown that lipid bilayers favour partial localization and display resistance to bending, stretching and fracture. 
Considerably less has been done concerning the associated evolution equations, but see, for example,~\cite{BarrettGarckeNuernberg,DallAcquaPozzi}.

The paper is organized as follows: In Section~\ref{notation} we introduce the notation used throughout the paper and summarize results we use in the proofs. We include a short discussion of differential operators on curves and discuss the definition of $E_{LM}$ and relations between $E_{LM}$ and geometric functionals. The proof of the existence of minimizers for the variational integral $E_{LM}$ is presented in Section~\ref{staticproblem} and the regularity in Section~\ref{Section:Regularity}, which, in fact, contains regularity for arbitrary critical points. The existence and regularity results include constraints on the length of the curve, the enclosed area, and the length of the surfactants. The concluding Section~\ref{DynamicalProblem} indicates possible extensions of our model to dynamic equations that arise as the $L^2$ gradient flow of the functional and the Appendix contains the derivation of the Euler-Lagrange equations.

\section*{Acknowledgments}
Christopher Brand acknowledges financial support through DFG GRK 1692 ``Curvature, Cycles, and Cohomology''.
The paper was carried out also
during a visit of Georg Dolzmann at University of Pisa
 supported by PRIN 2017
``Variational methods for stationary and evolution problems with singularities and interfaces''.

\section{Notation and preliminary results}\label{notation}

In this article we fix the orientation of a curve, and do not consider orientation reversing reparametrizations. Moreover, the energy $E_{LM}$ depends only on derivatives of $\gamma$ and $\eta$ and we need to introduce a normalization in order to obtain uniqueness results. Unless otherwise stated, we therefore assume the following hypotheses which we refer to as (H):
\begin{itemize}
 \item [(H1)] the constants $\delta$, $\lambda\in\mathbb{R}$ satisfy $\lambda>0$; in general, no assumption on the sign of $\delta$ is made; dependence of constants on $\delta$ and $\lambda$ is not indicated;
 \item [(H2)] the functions $(\gamma,\eta)$ are elements of $H^2_{imm}([0,2\pi];\R^2)\times H_{per}^1([0,2\pi];\R^2)$ defined below, $L(\gamma)$ denotes the length of $\gamma$, and
 \begin{equation}
 \vert \partial_x\gamma(x)\vert=\frac{L(\gamma)}{2\pi}\quad \text{for all}\,x\in [0,2\pi]\,;
 \end{equation}
 \item [(H3)] 
after a suitable translation it is assumed
 \begin{equation}\label{eq:MeanValGamma}
\int_{0}^{2\pi} \gamma\dv{x}=0\,;
\end{equation}
 \item [(H4)]
since only $\nabla_\gamma \eta$ is included in $E_{LM}$, it is supposed that
 \begin{equation}\label{eq:MeanValN}
 \int_{0}^{2\pi} \eta\dv{x}=0\,.
\end{equation}
\end{itemize}
Therefore an $L^2$--bound on the derivative of $\gamma$ and $\eta$ implies by Poincar\'e's inequality a corresponding $L^2$--bound on the functions themselves. 
The assumption (H4) on $\eta$ is not imposed in the presence of the constraint $\vert\eta\vert^2=1$ on $\eta$; if this constraint holds, then the $L^2$--norm of $\eta$ is bounded by the length of the curve.
We stress the fact that, because of the invariance property~\eqref{InvarianceReparametrization}
and the geometric nature of the problem, there can only be 
uniqueness up to reparametrization and up to rigid motions.
The  hypotheses (H2), (H3) and (H4) fix parametrisation and translations, but rotations are still allowed.

\subsection{Curves}
A regular curve is a differentiable curve $\gamma:[0,2\pi]\to\mathbb{R}^2$ with $\partial_x\gamma\neq 0$ and a plane curve is an element in the set $H_{imm}^2$ defined by 
\begin{align*}
 H_{imm}^2 = H_{imm}^2([0,2\pi];\R^2) = \{ \gamma\in H^2([0,2\pi];\R^2) \colon \gamma\text{ regular, }\gamma(0)=\gamma(2\pi),\,\partial_x\gamma(0) = \partial_x\gamma(2\pi) \}\,.
\end{align*}
All functions are extended by periodicity to $\R$ if needed and the set $H^2_{imm}$ is seen as 
an open  subset of the Banach space $H_{per}^2$ where 
\begin{align*}
H_{per}^k = H_{per}^k([0,2\pi];\R^2) = \{ \gamma\in H^k([0,2\pi];\R^2) \colon \partial_x^{\ell}\gamma(0)=\partial_x^{(\ell)}\gamma(2\pi)\text{ for }\ell=0,\ldots,k-1 \}\,,\quad k\in \mathbb{N}\,,
\end{align*}
if differentiability of operators is considered. The arc length derivative of a regular curve is given by $\partial_s \gamma = |\partial_x\gamma|^{-1}\partial_x\gamma$. The change of variables that leads to an arc length parametrization leaves the class of $H^2$ immersions invariant.
More generally, if a regular curve admits a parametrization of class $C^k$, then its reparametrization
by arc length is still of class $C^k$ (see~\cite[Theorem 1.2.11]{AbateTovena}).
The proof can be adapted to the Sobolev space $H^2$, for a sketch see the  Appendix.

The unit tangent vector is denoted by $\tau= \partial_x\gamma/|\partial_x\gamma|=\partial_s\gamma$ and the unit normal vector by $\nu=J\tau$ where $J$ is the counterclockwise rotation in the plane by $\pi/2$. The oriented curvature of a plane curve is the scalar function $\kappa:[0,2\pi]\to\mathbb{R}$ defined by $\partial_s\tau=\kappa\nu$ and the curvature vector $\vec{\kappa}$ is given by $\vec{\kappa}=\partial_s\tau=\partial_{ss}\gamma$. The length and the enclosed (signed) area of a differentiable curve are interpreted as functionals $L:H^2_{imm}\to [0,\infty)$ and $A:H^2_{imm}\to \R$ defined by
\begin{equation*}
L\colon \gamma\mapsto L(\gamma)=\int_0^{2\pi} \vert\partial_x{\gamma}(x)\vert\, \dv{x}=\int_\gamma 1\,\dv{s}\,,\; A:\gamma\mapsto A(\gamma) = -\frac{1}{2}\int_0^{2\pi}\scp{\gamma}{J\partial_x\gamma}\dv{x} = -\frac{1}{2}\int_\gamma\scp{\gamma}{\nu}\dv{s}\,.
\end{equation*}
Since $\eta$ represents the local average of surfactants, it is reasonable to introduce a constraint on the length of $\eta$ as well. For simplicity, we choose the pointwise constraint $|\eta|^2=1$ which we formulate with the mapping 
\begin{align*}
 S\colon H_{per}^1([0,2\pi];\R^2) \to H_{per}^1([0,2\pi])\,,\quad \eta \mapsto |\eta|^2 -1\,.
\end{align*}
Since $H_{per}^1$ is a Banach algebra, $S$ is defined on $H_{per}^1$. Our analysis uses the following theorem on the existence of Lagrange multiplicators~\cite[Theorem 26.1]{Deimling1985}. We denote the Fr\'echet derivative of a differentiable function $F:X\to Y$ by $F'$, the topological dual of a Banach space $X$ by $X^\ast$, and the adjoint operator for a linear and bounded map $T\in \mathcal{L}(X,Y)$ by $T^\ast \in \mathcal{L}( Y^\ast, X^\ast)$.

\begin{theorem}\label{Deimlingc9s26t1}
 Let $X$, $Y$ be real Banach spaces, $B(x_0,r)\subset X$, $\Phi:B(x_0,r)\to \R$ and $F:B(x_0,r)\to Y$ continuously differentiable, $F(x_0)=0$ and $R(F'(x_0))$ closed. Suppose also that 
 \begin{align*}
  \Phi(x_0) = \min\{ \Phi(x) \colon x\in B(x_0,r) \text{ and } F(x)=0\}\,.
 \end{align*}
Then there exist ``Lagrange multipliers'' $\lambda\in \R$ and $y^\ast \in Y^\ast$, not all zero, such that 
\begin{align*}
 \lambda \Phi'(x_0) + (F'(x_0))^\ast y^\ast = 0\;\text{in}\;X^\ast\,.
\end{align*}
If $R(F'(x_0))=Y$, then $\lambda \neq 0$. 
\end{theorem}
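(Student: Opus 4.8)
The plan is to set $T := F'(x_0) \in \mathcal{L}(X,Y)$ and split the argument according to whether $T$ is surjective, reducing everything to two classical facts about bounded linear operators on Banach spaces: the closed range theorem, and the Lyusternik--Graves surjection theorem (metric regularity of a $C^1$ map whose linearization is surjective).

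First I would dispose of the degenerate case $R(T) = R(F'(x_0)) \subsetneq Y$. Here one takes $\lambda = 0$ and produces a nonzero functional annihilating the range: since $R(T)$ is a proper \emph{closed} subspace of $Y$, the Hahn--Banach theorem yields $y^\ast \in Y^\ast$ with $y^\ast \neq 0$ and $y^\ast|_{R(T)} = 0$, so that $(T^\ast y^\ast)(x) = y^\ast(Tx) = 0$ for every $x \in X$, i.e. $T^\ast y^\ast = 0$. Then $(\lambda, y^\ast) = (0, y^\ast)$ is a nonzero pair satisfying the conclusion. Note that the minimality hypothesis is not even needed in this case, and that this case cannot occur when $R(T) = Y$, consistent with the final claim that $\lambda \neq 0$ then.

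The substantial case is $R(T) = Y$, and the key step is to show that $\Phi'(x_0)$ annihilates $\ker T$. Fix $h \in \ker T$. Because $F$ is $C^1$ with $F'(x_0)$ surjective, the Lyusternik--Graves theorem gives a constant $c>0$ and a neighbourhood of $x_0$ on which $F$ is metrically regular, so that for small $t$ one can solve $F(x_0 + th + w(t)) = 0$ with $\|w(t)\| \le c\,\|F(x_0 + th)\|$. Since $F(x_0) = 0$ and $F'(x_0)h = 0$, a first-order expansion gives $F(x_0 + th) = o(t)$, whence $\|w(t)\| = o(t)$, and the admissible curve $x(t) := x_0 + th + w(t)$ satisfies $x(t) \in \{F = 0\}$, $x(0) = x_0$, and $(x(t)-x_0)/t \to h$. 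The minimality hypothesis gives $\Phi(x(t)) \ge \Phi(x_0)$ for all small $t$ of either sign; combined with the expansion $\Phi(x(t)) = \Phi(x_0) + t\,\Phi'(x_0)h + o(t)$, valid because $\Phi \in C^1$ and $w(t) = o(t)$, this forces $\Phi'(x_0)h = 0$. As $h \in \ker T$ was arbitrary, $\Phi'(x_0) \in (\ker T)^\perp$.

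To conclude, I would invoke the closed range theorem: since $R(T)$ is closed, $(\ker T)^\perp = R(T^\ast)$, so $\Phi'(x_0) = T^\ast z^\ast$ for some $z^\ast \in Y^\ast$; taking $\lambda = 1$ and $y^\ast = -z^\ast$ yields $\lambda\,\Phi'(x_0) + T^\ast y^\ast = 0$ with $\lambda \neq 0$, which establishes both the general conclusion and the refinement. I expect the main obstacle to be precisely the construction of the admissible curve in the surjective case: one cannot in general write $X = \ker T \oplus X_1$ with a closed complement, so a naive application of the implicit function theorem is unavailable, and it is the Lyusternik--Graves metric-regularity estimate---proved by a contraction/modified-Newton iteration that uses only the openness of $T$ furnished by the open mapping theorem---that makes the argument go through.
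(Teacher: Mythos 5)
Your proof is correct. Note, however, that the paper does not prove this statement at all: it is quoted verbatim from Deimling's book (the citation \cite[Theorem 26.1]{Deimling1985} accompanies it), so there is no in-paper argument to compare against. Your two-case structure is sound: when $R(F'(x_0))\subsetneq Y$ the closedness hypothesis plus Hahn--Banach gives the degenerate multiplier $(0,y^\ast)$ with $y^\ast\neq 0$, and you rightly observe that minimality is not used there. In the surjective case, the combination of Lyusternik's tangent-space theorem (to build the admissible curve $x(t)=x_0+th+w(t)$ with $w(t)=o(t)$ for $h\in\ker F'(x_0)$, whence $\Phi'(x_0)h=0$ by two-sided minimality) with the closed range theorem identity $(\ker T)^\perp=R(T^\ast)$ is one of the two standard proofs and yields $\lambda=1$. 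For the record, Deimling's own argument takes the other standard route: one considers the combined map $x\mapsto(\Phi(x)-\Phi(x_0),F(x))$ into $\R\times Y$, shows via the Graves open-mapping argument that its derivative at $x_0$ cannot be surjective (otherwise local openness would produce admissible points with strictly smaller $\Phi$-value), verifies that the range of that derivative is closed using the closedness of $R(F'(x_0))$, and separates it from a point outside by Hahn--Banach to obtain $(\lambda,y^\ast)\neq(0,0)$ in one stroke. Both routes ultimately rest on the same hard ingredient, the Graves/Lyusternik surjection theorem; your version isolates the kernel annihilation more explicitly, while the combined-map version handles the normal and degenerate cases uniformly. One minor point worth making explicit in a write-up: the metric-regularity estimate gives a point of $F^{-1}(0)$ within $c\|F(x_0+th)\|$ of $x_0+th$ only up to an arbitrary factor greater than one, since the distance is an infimum, and one must check $x(t)\in B(x_0,r)$ for small $|t|$ before invoking minimality; both are trivially repaired and you implicitly do so.
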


We say that a constraint is admissible if its Fr\'echet derivative is onto. In view of the embedding $H_{imm}^2\hookrightarrow C^{1,\alpha}$ for all $\alpha\in [0,1/2]$, there exists for all $\gamma_0\in H_{imm}^2$ an $r>0$ such that the ball with radius $r$ in $H_{per}^2$ is contained in $H_{imm}^2$. Therefore the Fr\'echet derivatives $L'(\gamma_0)$ and $A'(\gamma_0)$ are defined and one can verify the assumptions in Theorem~\ref{Deimlingc9s26t1}. 

\begin{lemma}\label{lemma:AdmissibleConstraint}
Suppose that $\gamma_0\in H^2_{imm}([0,2\pi];\R^2)$, $|\partial_x\gamma_0|=L_0/2\pi$, and that $\eta_0\in H_{per}^1([0,2\pi];\R^2)$ with $|\eta_0|=1$ on $[0,2\pi]$. Then there exists an $r>0$ such that the functionals $L$, $A$, and $S$ define admissible constraints in the sense of Theorem~\ref{Deimlingc9s26t1} on $B(\gamma_0,r)\subset H_{per}^2$ which we refer to as (L), (A), and (S). Additionally, the functional $G:H_{imm}^2 \to \R^2$, $\gamma\mapsto (L(\gamma),A(\gamma))$ defines an admissible constraint unless $\gamma_0$ has constant curvature. 
\end{lemma}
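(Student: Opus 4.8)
The plan is to use the definition stated just above the lemma, namely that a constraint is admissible precisely when its Fr\'echet derivative is onto, so that the task reduces to verifying surjectivity of $L'(\gamma_0)$, $A'(\gamma_0)$, $S'(\eta_0)$ and $G'(\gamma_0)$. First I would fix, via the embedding $H^2_{imm}\hookrightarrow C^{1,\alpha}$ recalled before the lemma, a radius $r>0$ so that $B(\gamma_0,r)\subset H^2_{per}$ is contained in $H^2_{imm}$; on this ball $L$ and $A$ are continuously differentiable (their integrands are smooth functions of $\partial_x\gamma$ on the region $\partial_x\gamma\neq 0$) and $S$ is smooth because $H^1_{per}$ is a Banach algebra, while after the obvious normalization each constraint function vanishes at the base point. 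Since the ranges of $L'(\gamma_0)$, $A'(\gamma_0)$ and $G'(\gamma_0)$ are finite--dimensional they are automatically closed, so in every case the only point to check is surjectivity.

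For the surfactant constraint I would argue directly. The derivative is $S'(\eta_0)[\zeta]=2\scp{\eta_0}{\zeta}$, a map $H^1_{per}([0,2\pi];\R^2)\to H^1_{per}([0,2\pi])$. Given any $f\in H^1_{per}([0,2\pi])$ I set $\zeta=\tfrac12 f\,\eta_0$; since $H^1_{per}$ is a Banach algebra this product lies in $H^1_{per}([0,2\pi];\R^2)$ with $\nnorm{\zeta}_{H^1}\le C\nnorm{f}_{H^1}$, and using $|\eta_0|^2=1$ one obtains $S'(\eta_0)[\zeta]=f\,|\eta_0|^2=f$. Thus $S'(\eta_0)$ has an explicit bounded right inverse and is onto, giving admissibility of (S).

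Next I would record the first variations of length and area, where the boundary terms vanish by periodicity: $L'(\gamma_0)[\psi]=\int_0^{2\pi}\scp{\tau}{\partial_x\psi}\dv{x}=-\int_{\gamma_0}\kappa\scp{\nu}{\psi}\dv{s}$ and $A'(\gamma_0)[\psi]=-\int_0^{2\pi}\scp{J\partial_x\gamma_0}{\psi}\dv{x}=-\int_{\gamma_0}\scp{\nu}{\psi}\dv{s}$. To see that each maps onto $\R$ it suffices to exhibit one test function with nonzero image: testing $L'(\gamma_0)$ with $\psi=\gamma_0\in H^2_{per}$ yields $\int_0^{2\pi}|\partial_x\gamma_0|\dv{x}=L_0>0$ (the dilation computation), while for $A'(\gamma_0)$, since $\nu\neq 0$ in $L^2(\gamma_0;\R^2)$ and $H^2_{per}$ is dense in $L^2$, there is some $\psi\in H^2_{per}$ with $\int_{\gamma_0}\scp{\nu}{\psi}\dv{s}\neq 0$. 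Hence (L) and (A) are admissible as well.

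The substantive step is the joint constraint $G=(L,A)$, with $G'(\gamma_0)[\psi]=\barg{L'(\gamma_0)[\psi],\,A'(\gamma_0)[\psi]}$. Its image is a subspace of $\R^2$, so failure of surjectivity is equivalent to the existence of $(a,b)\neq(0,0)$ with $a\,L'(\gamma_0)[\psi]+b\,A'(\gamma_0)[\psi]=0$ for every $\psi\in H^2_{per}$, that is $\int_0^{2\pi}|\partial_x\gamma_0|\,(a\kappa+b)\scp{\nu}{\psi}\dv{x}=0$ for all such $\psi$. Here I expect the only real work: by density of $H^2_{per}$ in $L^2$ and $|\nu|=1$ this forces $(a\kappa+b)\nu=0$, hence $a\kappa+b=0$ almost everywhere; if $a=0$ then $b=0$, a contradiction, so $a\neq 0$ and $\kappa\equiv -b/a$ is constant. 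Conversely, if $\kappa$ is constant then $L'(\gamma_0)=\kappa\,A'(\gamma_0)$ and the image is at most one--dimensional. Therefore $G'(\gamma_0)$ is onto $\R^2$ exactly when $\gamma_0$ does not have constant curvature, which is the assertion. The main obstacle is precisely this characterization: one must pass from vanishing tested only against the restricted class $H^2_{per}$ (not all of $L^2$) to a pointwise identity, keeping the merely $H^1$--regular normal field $\nu$ inside the integral and using nothing about it beyond $|\nu|=1$.
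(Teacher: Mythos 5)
Your proposal is correct, and for the constraints (S), (L), (A) it coincides with the paper's argument (the right inverse $\zeta=\tfrac12 f\eta_0$ for $S'$ is exactly the paper's choice). The interesting difference is in the joint constraint $G=(L,A)$. The paper proceeds in two stages: it first shows that a one--dimensional range forces $L'=\lambda A'$ and hence, testing with $\varphi=g\nu_0$, that $\kappa_0=\lambda$ a.e.; then, for non-constant $\kappa_0$, it \emph{constructs} two linearly independent vectors in the image, namely $G'(\gamma_0)[\nu_0]=(-\int_\gamma\kappa_0\,\dv{s},-L_0)$ and $G'(\gamma_0)[f\nu_0]=(-\int_\gamma\kappa_0 f\,\dv{s},0)$, where the existence of $f\in C_c^\infty((0,2\pi))$ with $\overline f=0$ and $\int\kappa_0 f\neq 0$ requires an appeal to the du Bois-Raymond lemma. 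You instead observe that for a finite-dimensional target, failure of surjectivity is equivalent to a nonzero covector $(a,b)$ annihilating the image, which gives $\int_0^{2\pi}|\partial_x\gamma_0|(a\kappa+b)\scp{\nu}{\psi}\dv{x}=0$ for all $\psi$, and then you pass to the pointwise identity $a\kappa+b=0$ by noting that $(a\kappa+b)\nu\in L^2$ and that a bounded functional on $L^2$ vanishing on the dense subspace $H^2_{per}$ vanishes identically. This contrapositive packaging makes the explicit construction, and with it the du Bois-Raymond step, unnecessary; it also handles the zero-dimensional degeneracy of the range uniformly, whereas the paper's phrasing $L'=\lambda A'$ implicitly excludes it. What the paper's constructive route buys is an explicit pair of directions $\varphi$ realizing any prescribed value of $(L',A')$, which is occasionally useful downstream but is not needed for admissibility. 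Your direct verifications that $L'$ and $A'$ are individually onto (via $\psi=\gamma_0$ and density, respectively) are also a welcome addition, since the paper's reduction ``it suffices to treat $G$ and $S$'' is slightly loose in the constant-curvature case where $G$ itself is not admissible.
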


\begin{proof} 
For simplicity we write $\mathrm{d}s$ for the arc length with respect to $\gamma_0$, we add the subscript $0$ to the geometric quantities related to $\gamma_0$, and we write $L_0 = L(\gamma_0)$.
An integration by parts shows that for all functions  $\varphi\in H^1_{per}([0,2\pi];\R^2)$ and $\eta\in H_{per}^1([0,2\pi];\R^2)$
\begin{align*}
L'(\gamma_0)[\varphi] = \int_0^{2\pi}\left\langle\tau_0,\partial_x\varphi\right\rangle \dv{x} = -\int_0^{2\pi}\left\langle \partial_x\tau_0,\varphi\right\rangle \dv{x}=-\int_{\gamma}\langle\kappa_0 \nu_0,\varphi\rangle \dv{s}\,,\quad S'(\eta_0)[\eta] = 2\langle \eta_0, \eta \rangle_{\R^2}
\end{align*}
while by the identity $J^T=-J$ and $\nu_0 = J \partial_x \gamma_0/|\partial_x \gamma_0|$
\begin{align*}
A'(\gamma_0)[\varphi] &= -\frac{1}{2} \int_0^{2\pi} \langle \varphi, J\partial_x \gamma_0 \rangle + \langle \gamma_0, J\partial_x \varphi\rangle\dv{x}\\ &= -\frac{1}{2} \int_0^{2\pi} \langle \varphi, J\partial_x \gamma_0 \rangle - \langle J^T\partial_x\gamma_0, \varphi\rangle\dv{x}
= -\int_\gamma \langle \nu_0, \varphi \rangle\dv{s}\,,
\end{align*}
see also~\cite[Lemma~2.2]{GageContemMath1986}. 

We first prove that the range of $G'(\gamma_0)$ on $H_{per}^1$ is one--dimensional if and only if $\kappa_0$ is constant. In view of the formulas for the Fr\'echet derivative, the assumption that $\kappa_0$ is constant is sufficient for the range of $G$ being one--dimensional. Conversely, suppose that the range is one--dimensional, that is, that there exists a $\lambda\in \R$ with 
\begin{align*}
 L'(\gamma_0)[\varphi] = \lambda A'(\gamma_0)[\varphi]\; \Leftrightarrow \; 
 \int_{\gamma}(\kappa_0 - \lambda) \scp{\nu_0}{\varphi}\dv{s} =  \int_{0}^{2\pi}(\kappa_0 - \lambda) \scp{J\partial_x \gamma_0}{\varphi}\dv{x} = 0 \quad \text{ for all }\varphi\in H_{per}^1\,. 
\end{align*}
Since $\gamma_0\in H_{imm}^2$ and $\partial_x\gamma_0\in H_{per}^1$, we may use for any $g\in C_c^\infty([0,2\pi])$ the test function
$\varphi = g\nu_0  =2\pi g L_0^{-1}J\partial_x \gamma_0$. By the fundamental lemma of the calculus of variations, one gets $\kappa_0 = \lambda$ a.e.

Suppose now that $\kappa_0$ is not constant. Denote for an integrable function $f$ its average by $\overline{f}$. We first show that there exists a function $f\in C_c^\infty((0,2\pi))$ with $\overline{f}=0$ and $\int_{0}^{2\pi}\kappa_0 f \dv{x}\neq 0$. To see this, suppose the assertion were not true. By definition (as $f(0)=f(2\pi)=0$), for all $f\in C_c^\infty((0,2\pi))$ the function $f'\in C_c^\infty((0,2\pi))$ has average zero and therefore by a variant of the fundamental lemma in the calculus of variations, often referred to as du Bois-Raymond Lemma, see~\cite[Lemma 2 on Page 10]{GelfandFomin1963},
\begin{align*}
 \int_{0}^{2\pi} \kappa_0 f' \dv{x} = 0 \quad \Rightarrow \quad \kappa_0=const\,,
\end{align*}
a contradiction. 

To prove the Lemma, it suffices to show that $G$ and $S$ define admissible constraints. To simplify constants we choose $\varphi=\nu_0=2\pi L_0^{-1}J\partial_x\gamma_0\in H_{per}^1$ and the last formula for $L'$ shows that $G'(\gamma_0)[\varphi] = G'(\gamma_0)[\nu_0]=\barg{ -\int_\gamma \kappa_0 \dv{s}, -L_0 }$.

Since $\kappa_0\in L^2$, this defines a vector in a double cone about the $e_2$ axis with opening angle less than $\pi$.

To prove that $G'$ is onto, it thus suffices to find a different choice for $\varphi$ which leads to a vector which is linearly independent, for example in a double cone about the $e_1$ axis with arbitrarily small opening angle. Fix $f\in C_c^\infty((0,2\pi))$ with $\overline{f}=0$ and $\int_{0}^{2\pi}\kappa_0 f \dv{x}\neq 0$. By definition of $G'$, $G'$ is defined for $\varphi = f \nu_0 = f 2\pi L_0^{-1}J\partial_x \gamma_0\in H_{per}^1$  with
\begin{align*}
 G'(\gamma_0)[f \nu_0] = \Bigl( -\int_\gamma \kappa_0 f\dv{s}, -\int_\gamma f \dv{s} \Bigr) = -\Bigl(\int_\gamma \kappa_0 f\dv{s}\Bigl) e_1\neq 0\,.
\end{align*}
Thus $ G'(\gamma_0)[f \nu_0] $ is parallel to the $e_1$ axis, does not vanish, and is not contained in the double cone about the $e_2$ axis that was determined in the first step. Since $H_{per}^2$ lies dense in $H_{per}^1$ and all expressions are linear in $\varphi$, the map $G'$ is also onto if restricted to its domain $H_{per}^2$.

Finally, if $|\eta_0|=1$, then the map $S'(\eta_0)[\cdot]$ is onto. Indeed, if $\psi\in H_{per}^1([0,2\pi])$ is given, then the function $\eta = (1/2) \psi \eta_0$ satisfies $S'(\eta_0)[\eta] = \psi$.
\end{proof}

\subsection{Differential operators on curves and the definition of {$\boldsymbol{E_{LM}}$}}\label{OperatorsCurves}
In this section, we present differential operators on manifolds, their special form on curves in the plane, and we define the notation used in the definition of $E_{LM}$. For a curve $\gamma\in H_{per}^2([0,2\pi]; \R^2)$ its trace $\Gamma\subset \R^2$ is defined by $\Gamma = \gamma([0,2\pi])$. If $\gamma\in H_{\mathrm{emb}}^2$ with 
\begin{align*}
H_{\mathrm{emb}}^2 = H_{\mathrm{emb}}^2([0,2\pi];\R^2) =\{\gamma\in  H_{imm}^2([0,2\pi];\R^2)\colon \gamma\text{ simple and closed}\}
\end{align*}
is an embedding, then $\Gamma$ is an embedded manifold for which $\gamma^{-1}$ is a chart and for which the usual geometric derivatives of scalar functions $f$ and arbitrary vector fields $\eta$ along $\Gamma$ are defined at $p=\gamma(x)$ by
\begin{align*}
\nabla_\Gamma f(p) = \frac{\partial_x(f\circ\gamma)(x)}{|\partial_x \gamma(x)|} \, \frac{\partial_x\gamma(x)}{|\partial_x \gamma(x)|}\,,\quad  \diverg_\Gamma \eta(p) =\Bigscp{  \frac{\partial_x(\eta\circ\gamma)(x)}{|\partial_x \gamma(x)|}}{ \frac{\partial_x\gamma(x)}{|\partial_x \gamma(x)|}}\,.
\end{align*}
Moreover, $\nabla_\Gamma \eta = e_1 \otimes \nabla_\Gamma \eta_1 + e_2\otimes \nabla_\Gamma \eta_2$ is a matrix which contains the gradients of the components as rows. Note that $\nabla_\Gamma$ is the dual operator to $\diverg_\Gamma$ in the following sense: suppose that $f\in H^1(\Gamma)$ and $\eta\in H^1(\Gamma;\R^2)$, then the formula for integration by parts holds, 
\begin{align*}
 \int_{\Gamma} f \diverg_\Gamma \eta \dv{s} = - \int_{\Gamma} \scp{\nabla_\Gamma f}{\eta}\dv{s} - \int_{\Gamma} \kappa f \scp{\eta}{\nu}\dv{s}\,.
\end{align*}
With these definitions in place, one defines the geometric functional 
\begin{align*}
   E_{LM}^g(\Gamma, \eta)
   &  = \frac{1}{2}\int_{\Gamma} (\kappa + \delta \diverg_\Gamma \eta)^2 \dv{s} + \frac{\lambda}{2} \int_{\Gamma} |\nabla_\Gamma \eta|^2 \dv{s} + L(\Gamma)\,.
\end{align*}

If $\gamma\in H_{imm}^2$ is merely an immersion, then $\Gamma$ is not necessarily a manifold, but still many geometric quantities  may be defined locally as well. In fact, since $H_{imm}^2 \hookrightarrow C^1$, for each $x\in [0,2\pi]$ there exists an $\varepsilon>0$ such that $\gamma$ restricted to $(x-\varepsilon, x+\varepsilon)$ is injective. Define $\Gamma_{x,\varepsilon} = \gamma((x-\varepsilon, x+\varepsilon))$. Then $\gamma|_{(x-\varepsilon,x+\varepsilon)}^{-1}:\Gamma_{x,\varepsilon}\to (x-\varepsilon, x+\varepsilon)$ is a chart, and if $f:\Gamma_{x,\varepsilon}\to \R$ is a function and $\eta:\Gamma_{x,\varepsilon}\to \R^2$ is a vector field, then $\nabla_\Gamma f$ and $\diverg_\Gamma \eta$ may be defined as before. 

This local representation of $\Gamma$ leads to a local definition of vector fields $\eta$ along $\Gamma=\gamma([0,2\pi])$ with $\gamma\in H_{imm}^2$.
In this case, $\eta$ is said to be a vector field along $\Gamma$ if it is defined by vector fields $\eta_{x,\varepsilon}$ on all the sets $\Gamma_{x,\varepsilon}$ and if for $(x-\varepsilon, x+\varepsilon) \cap (x'-\varepsilon', x'+\varepsilon')\neq \emptyset$ the compatibility condition $\eta_{x,\varepsilon} = \eta_{x',\varepsilon'}$ on $\gamma((x-\varepsilon, x+\varepsilon) \cap (x'-\varepsilon', x'+\varepsilon'))$ holds. Consequently, on $(x-\varepsilon, x+\varepsilon)$ the composition $\eta\circ\gamma$ is defined. For simplicity we write $\eta\circ \gamma$ without explicit reference to the local definition. We say that $\eta \in H^1(\Gamma)$ if it is a vector field along $\Gamma$ and $\eta\circ\gamma$ is of class $H^1$. With this local definition, one can extend $E_{LM}^g$ to $\Gamma = \gamma([0,2\pi])$ for curves that are not embeddings and vector fields $\eta$ along $\Gamma$ based on the local definition.

However, this local definition requires a local decomposition of $[0,2\pi]$ for a given curve $\gamma\in H_{imm}^2$ and is not well adapted to minimization problems. Therefore we use the following identification in the case of embeddings as a guideline for the definition of the functional $E_{LM}$ which consequently coincides with the corresponding functional using the usual geometric definitions in the case of embeddings. If $\gamma$ is an embedding with trace $\Gamma$, then there is a one-to-one correspondence between points $p\in \Gamma$ and $x\in [0,2\pi]$ and we can identify a function $f:\Gamma\to \R$ and a vector field $\eta:\Gamma\to \R$ with the function $\widetilde{f}:[0,2\pi]\to \R$, $x\mapsto \widetilde{f}(x) = (f\circ \gamma)(x)$ and the vector field $\widetilde{\eta}:[0,2\pi]\to \R^2$, $x\mapsto \widetilde{\eta}(x) = (\eta\circ \gamma)(x)$. We define $\int_\gamma f \dv{s} = \int_0^{2\pi}f(x)|\partial_x\gamma(x)| \dv{x}$, 
\begin{align*}
\nabla_\gamma \widetilde{f}(x) = \frac{\partial_x \widetilde{f}}{|\partial_x \gamma|}\frac{\partial_x\gamma}{|\partial_x \gamma|}\,,\quad \diverg_\gamma \widetilde{\eta}(x) = \Bigscp{\frac{\partial_x \widetilde{\eta}}{|\partial_x \gamma|}}{\frac{\partial_x\gamma}{|\partial_x \gamma|}}
\end{align*}
and we see that the formula for integration by parts now holds in the form 
\begin{align*}
 \int_{\gamma} \widetilde{f} \diverg_\gamma \widetilde{\eta} \dv{s} = - \int_{\gamma} \scp{\nabla_\gamma \widetilde{f}}{\widetilde{\eta}}\dv{s} - \int_{\gamma} \kappa \widetilde{f} \scp{\widetilde{\eta}}{\nu}\dv{s}\,.
\end{align*}
The definition of the energy $E_{LM}^g$ for a simple closed curve is now given in the chart $\gamma$ by 
\begin{align*}
E_{LM}^g(\Gamma,\eta)  
&= \frac{1}{2}\int_0^{2\pi} \Barg{ \kappa(x) + \delta \Bigscp{\frac{\partial_x \widetilde{\eta}}{|\partial_x \gamma|}}{\frac{\partial_x\gamma}{|\partial_x \gamma|}}} ^2 |\partial_x \gamma|\dv{x} + \frac{\lambda}{2} \int_0^{2\pi} \Bbar{\frac{\partial_x \widetilde{\eta}}{|\partial_x \gamma|}\otimes\frac{\partial_x\gamma}{|\partial_x \gamma|}}^2 |\partial_x \gamma|\dv{x} + L(\gamma)\\&=  E_{LM}(\gamma, \widetilde{\eta})
\end{align*}
and this expression serves as definition of $E_{LM}$ in the case that $\gamma$ is not an embedding. The minimization for $E_{LM}$ is thus carried out on function spaces defined on $[0,2\pi]$. From now on, we write $f=\widetilde{f}$ and $\eta = \widetilde{\eta}$ in the definition of $\nabla_\gamma$ and $\diverg_\gamma$. In particular, if $\varphi \in C^1([0,2\pi];[0,2\pi])$ is a strictly increasing diffeomorphism and if $\widetilde{\gamma} = \gamma\circ\varphi$ and $\widetilde{f} = f \circ\varphi$, then at $p=\widetilde{\gamma}(y)=\widetilde{\gamma}(\varphi^{-1}(x)) = \gamma(x)$,
\begin{align*}
\nabla_{\widetilde\gamma} \widetilde{f}(y) = \frac{\partial_y \widetilde{f}}{|\partial_y \widetilde{\gamma}|}\frac{\partial_y\widetilde{\gamma}}{|\partial_y \widetilde{\gamma}|}(y) = \frac{\partial_x f}{|\partial_x \gamma|}\frac{\partial_x\gamma}{|\partial_x \gamma|}(\varphi(y)) = \frac{\partial_x f}{|\partial_x \gamma|}\frac{\partial_x\gamma}{|\partial_x \gamma|}(x) = \nabla_{\gamma}f(x)\,.
\end{align*}
The calculation for $\diverg_\gamma\eta$ is analogous and establishes~\eqref{InvarianceReparametrization}. Along the same lines, the local definition of $E_{LM}^g$ is equivalent to the definition of $E_{LM}$.

\section{Existence of minimizers}\label{staticproblem}

The existence of minimizers follows with the direct method in the calculus of variations. If $(\gamma_k,\eta_k)_{k\in \N}$ is a minimizing sequence, then, by~\eqref{InvarianceReparametrization}, we may assume that $|\dot{\gamma}_k| = L(\gamma_k)/2\pi$ and that $L(\gamma_k)$ is bounded by the energy. The Gau{\ss}-Bonnet theorem provides a uniform $L^2$--bound on the curvature which, under the hypothesis (H), for a plane curve, gives a uniform bound on the $H^2$--norm of $\gamma$. 
Weak compactness in this space together with the compact embedding into $H^1$ and the lower semicontinuity of the variational integral imply the assertion.

\begin{lemma}[Bounds on geometric quantities]\label{bound-geo-quantities}
Let $(\gamma,\eta)\in H^2_{imm}([0,2\pi];\mathbb{R}^2)\times 
H_{per}^1([0,2\pi];\mathbb{R}^2)$ satisfy the a~priori bound $E_{LM}(\gamma,\eta)\leq C_0<\infty$. Then
\begin{equation*}
\int_{\gamma}\kappa^2\dv{s}\leq \frac{2(\lambda+\delta^2)}{\lambda}C_0 = \widetilde{C}\quad
\text{and}\quad 
\widetilde{c} = \frac{2\pi^2\lambda}{C_0(\lambda+\delta^2)} \leq L(\gamma)\leq C_0\,.
\end{equation*}
\end{lemma}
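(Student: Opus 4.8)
The plan is to establish the three inequalities separately, in order of increasing difficulty, using only the non-negativity of the three contributions to $E_{LM}$ together with two elementary pointwise estimates. The upper bound on the length is immediate: since each of the three terms in $E_{LM}(\gamma,\eta)$ is non-negative, discarding the first two leaves $L(\gamma)\le E_{LM}(\gamma,\eta)\le C_0$.

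For the curvature estimate, abbreviate $b=\diverg_\gamma\eta$. The starting point is the pointwise inequality $|\diverg_\gamma\eta|\le|\nabla_\gamma\eta|$. Indeed, from the definitions in Section~\ref{OperatorsCurves} one reads off $\diverg_\gamma\eta=\langle\partial_s\eta,\tau\rangle$ and $|\nabla_\gamma\eta|^2=|\partial_s\eta|^2$, so the claim is just Cauchy--Schwarz together with $|\tau|=1$; integrating gives $\int_\gamma b^2\dv{s}\le\int_\gamma|\nabla_\gamma\eta|^2\dv{s}$. I would then write $\kappa=(\kappa+\delta b)-\delta b$ and apply the weighted Young inequality $(x+y)^2\le\theta^{-1}x^2+(1-\theta)^{-1}y^2$ with the specific weight $\theta=\lambda/(\lambda+\delta^2)$. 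This choice is dictated by the desired constant: the two resulting weights become $\theta^{-1}=(\lambda+\delta^2)/\lambda$ and $\delta^2/(1-\theta)=\lambda+\delta^2$, so that $\int_\gamma\kappa^2\dv{s}\le\frac{\lambda+\delta^2}{\lambda}\int_\gamma(\kappa+\delta b)^2\dv{s}+(\lambda+\delta^2)\int_\gamma b^2\dv{s}$. The point is that this equals $\frac{\lambda+\delta^2}{\lambda}\big(\int_\gamma(\kappa+\delta b)^2\dv{s}+\lambda\int_\gamma|\nabla_\gamma\eta|^2\dv{s}\big)$, and the bracket is exactly $2\big(E_{LM}(\gamma,\eta)-L(\gamma)\big)\le 2C_0$; hence $\int_\gamma\kappa^2\dv{s}\le 2(\lambda+\delta^2)C_0/\lambda=\widetilde C$.

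For the lower bound on the length, I would combine the curvature bound just obtained with a universal lower bound on the total absolute curvature of a closed curve. Because $\gamma$ is closed with matching tangents, the identity $\int_0^{2\pi}\partial_x\gamma\dv{x}=0$ shows that the tangent indicatrix $\tau$, viewed as a closed curve on $S^1$ of length $\int_\gamma|\kappa|\dv{s}$, has vanishing arclength-weighted barycenter and therefore cannot be contained in an open half-circle; consequently its length is at least $2\pi$, that is, $\int_\gamma|\kappa|\dv{s}\ge 2\pi$ (Fenchel's theorem). Cauchy--Schwarz then yields $(2\pi)^2\le\big(\int_\gamma|\kappa|\dv{s}\big)^2\le L(\gamma)\int_\gamma\kappa^2\dv{s}\le L(\gamma)\,\widetilde C$, and rearranging gives $L(\gamma)\ge 4\pi^2/\widetilde C=\widetilde c$.

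The only genuine obstacle is the total-absolute-curvature estimate in the last step: one must be sure that $\int_\gamma|\kappa|\dv{s}\ge 2\pi$ holds for an arbitrary $H^2$ immersion, not merely for embedded or convex curves. A figure-eight shows that the \emph{signed} total curvature can vanish, so the argument cannot rest on the rotation index or a naive Gauss--Bonnet identity; it must genuinely use $|\kappa|$ via the barycenter/half-circle argument sketched above (or a direct appeal to Fenchel's theorem for immersions). Everything else reduces to the recorded pointwise bound $|\diverg_\gamma\eta|\le|\nabla_\gamma\eta|$ and routine applications of Young's and Cauchy--Schwarz inequalities.
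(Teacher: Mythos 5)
Your proof is correct and follows essentially the same route as the paper: the same weighted Young's inequality with weight $\lambda/(\lambda+\delta^2)$ (you bound $\kappa^2$ from above by decomposing $\kappa=(\kappa+\delta b)-\delta b$, while the paper equivalently bounds the cross term $\delta\kappa\,b$ from below), the same pointwise estimate $|\diverg_\gamma\eta|\le|\partial_s\eta|=|\nabla_\gamma\eta|$, and the same Fenchel-plus-Cauchy--Schwarz argument for the lower bound on the length. Your caution about the figure-eight is well placed but already resolved in the paper, whose displayed estimate invokes Fenchel's theorem for the total \emph{absolute} curvature of a closed immersed curve rather than the signed turning number.
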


\begin{proof}
For simplicity we assume $\delta\geq 0$, the proof for $\delta\leq 0$ is analogous. Moreover, if a plane curve is a simple closed curve with positive  orientation, then the total curvature is $2\pi$. More generally, if the curve is a plane curve, then the total curvature is an integer multiple of $2\pi$ \cite{SullivanOW2008} and does not vanish either. After changing the orientation of $\gamma$ and the sign of $\eta$, if needed, we may assume that the total curvature of $\gamma$ is greater than or equal to $2\pi$.

We begin by showing the bound on the $L^2$--norm of the curvature of $\gamma$. We first derive a lower bound on the quantity $(\kappa+\delta\diverg_\gamma \eta)^2$ in terms of $\kappa^2$ and $(\diverg_\gamma \eta)^2$. Using the generalized Young's inequality in the form $\vert ab\vert \leq (\varepsilon/2)a^2+1/(2\varepsilon)b^2$ with $a=\delta\kappa$, $b=\diverg_\gamma \eta$ and $\varepsilon=1/(\lambda+\delta^2)>0$ we get 
	\begin{equation}\label{step-with-young}
	\delta\kappa\diverg_\gamma \eta\geq -\frac{\delta^2}{2\left(\lambda+\delta^2\right)}\kappa^2 
	-\frac{\lambda+\delta^2}{2}\left(\diverg_\gamma \eta\right)^2\,.
	\end{equation}
Since $\Vert\diverg_{\gamma}\eta\Vert_{L^2}\leq \Vert\partial_s \eta\Vert_{L^2}$, we obtain for $E$ defined in~\eqref{EnergyLM}
	\begin{align}\label{estimate-product}
	\begin{aligned}	
	E(\gamma,\eta)
\geq &
	\int_\gamma \frac{\kappa^2}{2}+\frac{\delta^2}{2}(\diverg_{\gamma}\eta)^2+\delta\kappa\diverg_{\gamma}\eta
	+\frac{\lambda}{2}\vert\partial_s \eta\vert^2\dv{s}\\
\geq & 
	\int_{\gamma}\frac{\lambda \kappa^2}{2\left(\lambda+\delta^2\right)}
	-\frac{\lambda}{2}\left(\diverg_{\gamma}\eta\right)^2
	+\frac{\lambda}{2}\vert\partial_s \eta\vert^2\dv{s}
\geq \frac{\lambda}{2\left(\lambda+\delta^2\right)}\int_{\gamma}\kappa^2\dv{s}\,,
	\end{aligned}
	\end{align}
and hence the $L^2$--bound for $\kappa$. 
Since $E_{LM}= E + L$, the upper bound on $L$ follows from the assumption. To derive a lower bound, one uses Fenchel's Theorem, see~\cite[Section~5.7,~Theorem~3]{DoCarmoDiffGeo1976}, together with H\"{o}lder's inequality,\
	\begin{equation}\label{step-with-Hoelder}
	2\pi \leq \int_{\gamma} |\kappa|\dv{s}\leq \Bigl(\int_{\gamma}\kappa^2\dv{s}\Bigr)^{1/2}
	\Bigl(\int_\gamma 1\dv{s}\Bigr)^{1/2}\,,
	\end{equation}
that is,
	\begin{equation}\label{estimate-kappa-wrt-L}
	\int_{\gamma}\kappa^2\dv{s}\geq \frac{4\pi^2}{L(\gamma)}\,,
	\end{equation}
and together with the $L^2$--bound on the curvature~\eqref{estimate-product} one obtains $L(\gamma)\geq 4\pi^2/\widetilde{C}=\widetilde{c}>0$.
\end{proof}

\begin{lemma}[Lower bound on the energy]\label{lem:lower-bound-energy}
For $(\gamma,\eta)\in H^2_{imm}([0,2\pi];\mathbb{R}^2)\times 
H_{per}^1([0,2\pi];\mathbb{R}^2)$ the following inequality for $E_{LM}$ holds true, 
\begin{equation}\label{lowerboundenergy}
E_{LM}(\gamma,\eta)\geq 2\pi\sqrt{\frac{2\lambda}{\lambda+\delta^2}}\,.
\end{equation}
\end{lemma}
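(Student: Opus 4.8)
The plan is to combine the two estimates already established inside the proof of Lemma~\ref{bound-geo-quantities} and then optimize over the length, keeping the length term rather than discarding it. First I would recall the pointwise-to-integral lower bound \eqref{estimate-product}, which states that
\[
E(\gamma,\eta) \geq \frac{\lambda}{2(\lambda+\delta^2)} \int_\gamma \kappa^2 \dv{s},
\]
together with the Fenchel/H\"older estimate \eqref{estimate-kappa-wrt-L}, namely $\int_\gamma \kappa^2 \dv{s} \geq 4\pi^2/L(\gamma)$. Since $E_{LM} = E + L$, chaining these two inequalities immediately yields
\[
E_{LM}(\gamma,\eta) \geq \frac{2\pi^2 \lambda}{(\lambda+\delta^2)\,L(\gamma)} + L(\gamma).
\]

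The second step is to treat the right-hand side as a function of the single scalar $L = L(\gamma) > 0$. Writing $c = 2\pi^2\lambda/(\lambda+\delta^2)$, the arithmetic--geometric mean inequality gives $c/L + L \geq 2\sqrt{c}$ for every $L>0$, with equality at $L = \sqrt{c}$. Substituting the value of $c$ produces
\[
E_{LM}(\gamma,\eta) \geq 2\sqrt{\frac{2\pi^2\lambda}{\lambda+\delta^2}} = 2\pi\sqrt{\frac{2\lambda}{\lambda+\delta^2}},
\]
which is exactly \eqref{lowerboundenergy}.

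I do not expect any genuine obstacle here: both ingredients are available essentially verbatim from the preceding lemma, and the only conceptual point is to \emph{retain} the length contribution $L(\gamma)$ instead of bounding it away, so that the competition between the curvature penalty (which blows up as $L \to 0$) and the length penalty (which blows up as $L \to \infty$) can be balanced by AM--GM to give the sharp constant. The one detail worth verifying is that the total-curvature lower bound $\int_\gamma |\kappa|\dv{s} \geq 2\pi$ underlying \eqref{step-with-Hoelder} holds for every closed immersed plane curve of class $H^2_{imm}$ --- this is Fenchel's theorem combined with the fact that the rotation index of a closed plane curve is a nonzero integer, as already invoked in Lemma~\ref{bound-geo-quantities} --- so no embeddedness or orientation hypothesis beyond $\gamma \in H^2_{imm}$ is required.
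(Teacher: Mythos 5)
Your proof is correct and follows exactly the paper's own argument: chain the curvature lower bound \eqref{estimate-product} with the Fenchel/H\"older estimate \eqref{estimate-kappa-wrt-L}, retain the length term, and apply $b/L+L\geq 2\sqrt{b}$. No further comment is needed.
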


\begin{proof}
We conclude with the lower bounds on $E$ in~\eqref{estimate-product} and $\| \kappa \|_2^2$ in~\eqref{estimate-kappa-wrt-L} that
\begin{equation*}
	E_{LM}(\gamma,n)
	\geq \frac{\lambda}{2\left(\lambda+\delta^2\right)}\int_{\gamma}\kappa^2\dv{s}+L(\gamma)
	\geq \frac{2\lambda\pi^2}{\lambda+\delta^2}\cdot \frac{1}{L(\gamma)}+L(\gamma)
	\geq 2\pi\sqrt{\frac{2\lambda}{\lambda+\delta^2}}\,
	\end{equation*}
where we use in the last inequality the estimate $b/L + L\geq 2\sqrt{b}$ for $b\geq 0$.
\end{proof}

\begin{theorem}[Existence of minimizers for $E_{LM}$]\label{thm:existence}
Suppose that $\delta$, $\lambda\in\mathbb{R}$ and that $\lambda>0$. Then there exists a minimizer of the functional $E_{LM} \colon H^2_{imm}([0,2\pi];\mathbb{R}^2)\times H_{per}^1([0,2\pi];\mathbb{R}^2)\to \mathbb{R}$ subject to the constraints~\eqref{eq:MeanValGamma} and~\eqref{eq:MeanValN}.
\end{theorem}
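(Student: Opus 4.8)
The plan is to apply the direct method. Set $m = \inf E_{LM}$ over the admissible class; by Lemma~\ref{lem:lower-bound-energy} we have $m \geq 2\pi\sqrt{2\lambda/(\lambda+\delta^2)}>0$, so $m$ is finite and I may fix a minimizing sequence $(\gamma_k,\eta_k)$ with $E_{LM}(\gamma_k,\eta_k)\to m$, and after discarding finitely many terms $E_{LM}(\gamma_k,\eta_k)\le C_0$ for some $C_0<\infty$. First I would use the reparametrization invariance~\eqref{InvarianceReparametrization} to reparametrize each $\gamma_k$ by constant speed, $|\partial_x\gamma_k| = L(\gamma_k)/(2\pi)=:c_k$, applying the same diffeomorphism to $\eta_k$; this leaves the energy unchanged and realizes (H2), while (H3) and (H4) are arranged by translation.

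Next I extract uniform bounds from Lemma~\ref{bound-geo-quantities}, which gives $\int_{\gamma_k}\kappa_k^2\dv{s}\le\widetilde C$ and $\widetilde c\le L(\gamma_k)\le C_0$. In the constant-speed parametrization one has $\partial_{xx}\gamma_k = c_k^2\kappa_k\nu_k$ and $\int_{\gamma_k}\kappa_k^2\dv{s} = c_k\int_0^{2\pi}\kappa_k^2\dv{x}$, so $\int_0^{2\pi}|\partial_{xx}\gamma_k|^2\dv{x} = c_k^3\int_{\gamma_k}\kappa_k^2\dv{s}$ is bounded; since in addition $|\partial_x\gamma_k| = c_k$ is bounded, (H3) and Poincar\'e's inequality bound $\gamma_k$ in $H^2_{per}$. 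Likewise $|\nabla_\gamma\eta_k|^2 = |\partial_s\eta_k|^2$ and $\int_0^{2\pi}|\partial_x\eta_k|^2\dv{x} = c_k\int_{\gamma_k}|\partial_s\eta_k|^2\dv{s}\le 2c_kC_0/\lambda$, so the Frank term together with (H4) and Poincar\'e bound $\eta_k$ in $H^1_{per}$.

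By weak compactness and the compact embeddings $H^2_{per}\hookrightarrow C^1$ and $H^1_{per}\hookrightarrow C^0$, I then pass to a subsequence along which $\gamma_k\rightharpoonup\gamma$ in $H^2$ and $\gamma_k\to\gamma$ in $C^1$, $\eta_k\rightharpoonup\eta$ in $H^1$ and $\eta_k\to\eta$ in $L^2$, and $c_k\to c$. Since $c_k\ge\widetilde c/(2\pi)$, the uniform convergence of $|\partial_x\gamma_k|$ forces $|\partial_x\gamma| = c\ge\widetilde c/(2\pi)>0$, so $\gamma$ is an immersion; the periodicity conditions pass to the $C^1$ limit and the constraints~\eqref{eq:MeanValGamma} and~\eqref{eq:MeanValN} pass to the $L^2$ limit, so $(\gamma,\eta)$ is admissible.

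It remains to prove lower semicontinuity, $E_{LM}(\gamma,\eta)\le\liminf_k E_{LM}(\gamma_k,\eta_k)$. The length is continuous under $C^1$ convergence. The decisive point, and the main obstacle, is that the curvature and the coupling term enter the first integrand together inside one square, and each is built from a derivative that converges only weakly: $\kappa = c^{-3}\scp{\partial_{xx}\gamma}{J\partial_x\gamma}$ involves $\partial_{xx}\gamma$ and $\diverg_\gamma\eta = c^{-2}\scp{\partial_x\eta}{\partial_x\gamma}$ involves $\partial_x\eta$, coupled through the tangent $\partial_x\gamma$. The resolution I would use is that both quantities are \emph{affine} in the pair $(\partial_{xx}\gamma,\partial_x\eta)$, so that $(\kappa+\delta\diverg_\gamma\eta)^2$ and $|\nabla_\gamma\eta|^2$ are jointly convex in $(\partial_{xx}\gamma,\partial_x\eta)$, with all coefficients depending only on $\partial_x\gamma$, which converges uniformly. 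Since $(\partial_{xx}\gamma_k,\partial_x\eta_k)\rightharpoonup(\partial_{xx}\gamma,\partial_x\eta)$ weakly in $L^2$ while the parameter $\partial_x\gamma_k$ converges strongly, a Tonelli--Ioffe type lower semicontinuity theorem for integrands convex in the weakly converging variable applies and yields the claim, whence $E_{LM}(\gamma,\eta)\le m$ and $(\gamma,\eta)$ is a minimizer. Care is needed to ensure the speed does not degenerate in the limit, as this is precisely what keeps the denominators $c^{-1},c^{-2},c^{-3}$ bounded and guarantees the limit is still an immersion.
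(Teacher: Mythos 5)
Your proposal is correct and follows the same skeleton as the paper's proof: constant--speed reparametrization via~\eqref{InvarianceReparametrization}, the a~priori bounds of Lemma~\ref{bound-geo-quantities}, the $H^2\times H^1$ bounds through Poincar\'e's inequality under (H3)--(H4), weak compactness plus the compact embedding into $C^{1,\alpha}\times C^0$, and the observation that the uniform lower bound on $L(\gamma_k)$ keeps the limit an immersion. The one place where you genuinely diverge is the lower semicontinuity step. The paper argues by hand: it first shows that $\kappa_n\rightharpoonup\kappa_\infty$ and $\nabla_{\gamma_n}\eta_n\rightharpoonup\nabla_{\gamma_\infty}\eta_\infty$ weakly in $L^2$ (products of weakly convergent second derivatives with uniformly convergent tangents), then splits the measure $|\partial_x\gamma_n|\dv{x}$ into $|\partial_x\gamma_\infty|\dv{x}$ plus an error controlled by the $C^1$ convergence and the uniform $L^2$ bound, and finally invokes weak lower semicontinuity of the (weighted) $L^2$ norm. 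You instead observe that $\kappa$ and $\diverg_\gamma\eta$ are linear in the weakly converging pair $(\partial_{xx}\gamma,\partial_x\eta)$ with coefficients depending only on $\partial_x\gamma$, which converges uniformly and stays bounded away from zero, so that the full integrand is a normal integrand convex in the weak variable, and you apply a Tonelli--Ioffe theorem. Both are valid; the paper's route is self-contained and elementary, while yours outsources the work to a standard semicontinuity theorem and makes the structural reason for lower semicontinuity (joint convexity in the derivatives of highest order) more transparent and more robust, e.g.\ under perturbations of the integrand. Your computations ($\partial_{xx}\gamma_k=c_k^2\kappa_k\nu_k$, the relation between $\int_0^{2\pi}|\partial_{xx}\gamma_k|^2\dv{x}$ and $\int_{\gamma_k}\kappa_k^2\dv{s}$, and the corresponding identities for $\eta_k$) all check out, and you correctly identify the non-degeneracy of the speed as the point that keeps the coefficients continuous.
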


\begin{proof}
The energy is nonnegative and finite for all $(\gamma,\eta)\in H^2_{imm}([0,2\pi];\mathbb{R}^2)\times H_{per}^1([0,2\pi];\mathbb{R}^2)$. Hence 
\begin{align*}
 \inf_{(\gamma,\eta)\in H_{imm}^2 \times H_{per}^1} E_{LM}(\gamma, \eta) = m\geq 0\,.
\end{align*}
Choose a minimizing sequence $(\gamma_k,\eta_k)_{k\in \N}$ in $H^2_{imm}([0,2\pi];\mathbb{R}^2)\times H_{per}^1([0,2\pi];\mathbb{R}^2)$ and assume that $E_{LM}(\gamma_k,\eta_k)\leq C_0$, $C_0 = 2m+1$, and that, by~\eqref{InvarianceReparametrization}, $|\partial_x\gamma_k| = L(\gamma_k)/2\pi$. Recall that we always assume~\eqref{eq:MeanValGamma} and~\eqref{eq:MeanValN}, i.e., the averages of $\gamma_k$ and $\eta_k$ vanish. By Lemma~\ref{bound-geo-quantities}, $L(\gamma_k)$ is uniformly bounded from above and below and we may assume that $L(\gamma_k)\to L_\infty\in(0,\infty)$ for $k\to \infty$. Since $|\partial_x \gamma_k|$ is constant in $x$,
\begin{align*}
|\kappa_k| = |\partial_{ss} \gamma_k| =\Bbar{ \frac{1}{|\partial_x\gamma_k|}\partial_x \Barg{ \frac{1}{|\partial_x\gamma_k|}\partial_x \gamma_k }} =  \frac{1}{|\partial_x\gamma_k|^2} \cdot | \partial_{xx}\gamma_k| \geq \frac{(2\pi)^2}{C_0^2} |\partial_{xx}\gamma_k|\,.
\end{align*}
By the bounds on $L(\gamma_k)$ and on the $L^2$--norm of $\kappa_k$ in Lemma~\ref{bound-geo-quantities} 
\begin{equation*}
\int_0^{2\pi} |\partial_x\gamma|^2 \dv{x} = 2\pi \cdot \frac{L(\gamma_k)^2}{(2\pi)^2} \leq \frac{C_0^2}{2\pi}
\end{equation*}
as well as 
$$
\int_{\gamma_k} \kappa_k^2\dv{s}_k=\int_0^{2\pi}\left(\frac{|\partial_{xx}\gamma_k|}{\vert\partial_x\gamma_k\vert^2}\right)^2\vert\partial_x\gamma_k\vert\dv{x}=
\int_0^{2\pi}\frac{\vert\partial_{xx}\gamma_k\vert^2}{\vert\partial_x\gamma_k\vert^3}\dv{x}
$$
and thus
$$
\int_0^{2\pi} |\partial_{xx}\gamma_k|^2 \dv{x}=\left(\frac{L(\gamma_k)}{2\pi}\right)^3\int_{\gamma_k}\vert\kappa_k|^2\dv{s}_k\leq \frac{\widetilde C C_0^3}{8\pi^3}\,.
$$
By Poincar\'e's inequality, which is applicable in view of~\eqref{eq:MeanValGamma}, we deduce the uniform bound 
$$\| \gamma_k\|_{H^2([0,2\pi];\mathbb{R}^2)}\leq C
$$ 
with a suitable constant $C<\infty$. Moreover,  \eqref{eq:MeanValN} holds,
\begin{align*}
 \int_{\gamma_k} |\nabla_{\gamma_k} \eta_k|^2 \dv{s}_k =  \int_{\gamma_k} |\nabla_{s_k} \eta_k \otimes \tau_k|^2 \dv{s}_k = \frac{1}{|\partial_x\gamma_k|}\int_0^{2\pi} |\partial_x \eta_k|^2 \dv{x}\geq \frac{2\pi}{C_0}\int_0^{2\pi} |\partial_x \eta_k|^2 \dv{x}
\end{align*}
and, again by Poincar\'e's inequality, $\| \eta_k \|_{H^1([0,2\pi];\mathbb{R}^2)}$ is uniformly bounded as well. Consequently there exists a subsequence $(\gamma_n,\eta_n)_{n\in \N} = (\gamma_{k_n},\eta_{k_n})_{n\in \N}$ such that $(\gamma_n, \eta_n)_{n\in\mathbb{N}}$ converges weakly in $H_{per}^2([0,2\pi];\mathbb{R}^2)\times H_{per}^1([0,2\pi];\mathbb{R}^2)$ to $(\gamma_{\infty},\eta_{\infty})$. By the compact embedding of Sobolev spaces into H\"older spaces one gets for every $\alpha\in (0,\frac{1}{2})$ the strong convergence in $C^{1,\alpha}([0,2\pi];\mathbb{R}^2)\times C^{\alpha}([0,2\pi];\mathbb{R}^2)$. In particular, the length functional is continuous with respect to convergence in $C^{1,\alpha}$ and we infer $L(\gamma_\infty) = L_\infty>0$  and $|\partial_x\gamma_\infty|=L_\infty/(2\pi)>0$. Thus $\gamma_\infty\in H^2_{imm}$. Note that 
\begin{align*}
 \kappa_n = \frac{1}{|\partial_x\gamma_n|^2} \cdot  \partial_{xx}\gamma_n\,,\quad \nabla_{\gamma_n} \eta_n = \frac{1}{|\partial_x\gamma_n|}\partial_x \eta_n \otimes \tau_n\,.
\end{align*}
Since $\gamma_n \to \gamma_\infty$ in $C^{1,\alpha}$, $\tau_n\to \tau_\infty$ in $C^{\alpha}$ and we find $\kappa_n \rightharpoonup \kappa_\infty$ in $L^2$ and $\nabla_{\gamma_n}\eta_n \rightharpoonup \nabla_{\gamma_\infty} \eta_\infty$ in $L^2$ for $n\to \infty$. Moreover, the sequences $| \partial_x \gamma_\infty|^{1/2}(\kappa_n + \delta\diverg_{\gamma_n}\eta_n):[0,2\pi]\to \R$ and $| \partial_x \gamma_\infty|^{1/2}\nabla_{\gamma_n} \eta_n:[0,2\pi]\to \R$ are weakly convergent in $L^2$ and uniformly bounded in $L^2$ by a constant $C_1$. Hence 
\begin{align*}
 \int_{\gamma_n} (\kappa_n +\delta \diverg_{\gamma_n}\eta_n)^2 &\dv{s}_n  = \int_0^{2\pi}(\kappa_n +\delta \diverg_{\gamma_n}\eta_n)^2 | \partial_x \gamma_n|\dv{x}\\ & \geq \int_0^{2\pi}(\kappa_n +\delta \diverg_{\gamma_n}\eta_n)^2 | \partial_x \gamma_\infty|\dv{x} - \int_0^{2\pi}(\kappa_n +\delta \diverg_{\gamma_n}\eta_n)^2 | \partial_x \gamma_n -  \partial_x \gamma_\infty|\dv{x}
\end{align*}
and in view of the convergence of $(\gamma_n)$ in $C^{1,\alpha}$ and lower semicontinuity of the norm with respect to weak convergence we find  for all $\varepsilon>0$ 
\begin{align*}
 \liminf_{n\to \infty} \int_{\gamma_n} (\kappa_n +\delta \diverg_{\gamma_n}\eta_n)^2 \dv{s}_n\geq \int_{\gamma_\infty}(\kappa_\infty +\delta \diverg_{\gamma_\infty}\eta)^2 \dv{s}_\infty - C_1 \varepsilon\,. 
\end{align*}
Therefore the first term in $E_{LM}$ is sequentially lower semicontinuous with respect to the given convergence, the argument for the second term is analogous, and the third term is in fact continuous. 
\end{proof}

We now consider the variational integral $E_{LM}$ subject to the constraints (A) and (S), that is, we seek for given $A_0\in\mathbb{R}$ minimizing pairs $(\gamma, \eta)\in H_{imm}^2([0,2\pi];\R^2) \times H_{per}^1([0,2\pi];\R^2)$ with $A(\gamma) = A_0$ and $S(\eta)=0$. Since $E_{LM}$ penalizes the length of the curve, we do not include the constraint on the length of the curve which, due to the isoperimetric
inequality, requires the condition $\vert A_0\vert\leq L(\gamma)^2/4\pi$. Recall that we do not impose the condition \eqref{eq:MeanValN}, that the average of $\eta$ vanishes, if the length of $\eta$ is prescribed.

\begin{corollary}[$E_{LM}$ with constraints]
Fix $\delta$, $\lambda$, $A_0\in \R$ with $\lambda>0$. Then the variational problem 
\begin{align*}
 \text{ minimize } E_{LM}\text{ in } \mathcal{A} = \{ (\gamma, \eta)\in H_{imm}^2([0,2\pi];\R^2) \times H_{per}^1([0,2\pi];\R^2)\colon A(\gamma) = A_0, \, S(\eta) = 0 \}
\end{align*}
has a solution.
\end{corollary}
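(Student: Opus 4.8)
The plan is to run the direct method again, now in the presence of the two constraints, mirroring the proof of Theorem~\ref{thm:existence} but tracking how the constraints survive the limit. First I would check that $\mathcal{A}\neq\emptyset$ (a circle of suitable radius together with a constant unit field $\eta$ provides an admissible competitor) so that $m_c := \inf_{\mathcal{A}} E_{LM}$ is finite and nonnegative. Then I would pick a minimizing sequence $(\gamma_k,\eta_k)\in\mathcal{A}$ with $E_{LM}(\gamma_k,\eta_k)\le C_0$, and use the reparametrization invariance~\eqref{InvarianceReparametrization} to normalize $|\partial_x\gamma_k|=L(\gamma_k)/2\pi$. Crucially, the constraints $A(\gamma_k)=A_0$ and $|\eta_k|\equiv 1$ are both invariant under this reparametrization, so the normalized sequence still lies in $\mathcal{A}$.

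Next I would harvest the a~priori bounds exactly as before. Lemma~\ref{bound-geo-quantities} gives a uniform $L^2$--bound on $\kappa_k$ and two--sided bounds on $L(\gamma_k)$, which together with the normalization yield a uniform $H^2$--bound on $\gamma_k$ via Poincar\'e's inequality (applicable because we may still translate to arrange~\eqref{eq:MeanValGamma}). For $\eta_k$ the situation differs from Theorem~\ref{thm:existence}: the mean--value condition~\eqref{eq:MeanValN} is not imposed, so Poincar\'e is not directly available. However, the pointwise constraint $|\eta_k|=1$ gives $\|\eta_k\|_{L^2}^2=2\pi$ for free, and the Frank term bounds $\|\partial_x\eta_k\|_{L^2}$, so $\|\eta_k\|_{H^1}$ is again uniformly bounded. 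Passing to a subsequence, I obtain weak convergence $(\gamma_n,\eta_n)\rightharpoonup(\gamma_\infty,\eta_\infty)$ in $H^2_{per}\times H^1_{per}$ and, by compact embedding, strong convergence in $C^{1,\alpha}\times C^\alpha$. As in Theorem~\ref{thm:existence}, the limit satisfies $|\partial_x\gamma_\infty|=L_\infty/2\pi>0$, hence $\gamma_\infty\in H^2_{imm}$.

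The step I expect to require the most care is the passage to the limit in the two constraints. The area functional $A$ is continuous with respect to $C^{1,\alpha}$ convergence (it involves only $\gamma$ and $\partial_x\gamma$ paired bilinearly), so $A(\gamma_\infty)=\lim_n A(\gamma_n)=A_0$ and the area constraint is preserved. For the pointwise constraint I would use that $\eta_n\to\eta_\infty$ strongly in $C^\alpha\subset C^0$, so $|\eta_n(x)|\to|\eta_\infty(x)|$ uniformly and hence $|\eta_\infty|\equiv 1$; thus $S(\eta_\infty)=0$ and $(\gamma_\infty,\eta_\infty)\in\mathcal{A}$. This is the only genuinely new ingredient compared with the unconstrained case, and it is clean precisely because the uniform convergence is strong enough to preserve an equality constraint.

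Finally I would invoke the sequential lower semicontinuity of $E_{LM}$ established in the proof of Theorem~\ref{thm:existence}: that argument used only the weak $L^2$ convergence of $\kappa_n$ and $\nabla_{\gamma_n}\eta_n$ together with the $C^{1,\alpha}$ convergence of $\gamma_n$, none of which relied on either constraint, so it applies verbatim here. Consequently
\begin{align*}
E_{LM}(\gamma_\infty,\eta_\infty)\le\liminf_{n\to\infty}E_{LM}(\gamma_n,\eta_n)=m_c\,,
\end{align*}
and since $(\gamma_\infty,\eta_\infty)\in\mathcal{A}$ we must have equality, so $(\gamma_\infty,\eta_\infty)$ is the desired minimizer. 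The whole proof is therefore a corollary of Theorem~\ref{thm:existence} in which one only additionally verifies that the admissible set $\mathcal{A}$ is closed under the compact convergence produced by the a~priori bounds.
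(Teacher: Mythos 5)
Your argument follows the paper's proof almost verbatim: nonemptiness of $\mathcal{A}$, the direct method with the a~priori bounds of Lemma~\ref{bound-geo-quantities}, replacement of the Poincar\'e bound on $\eta_k$ by the $L^2$--bound coming from $|\eta_k|=1$, and continuity of the constraints (A) and (S) under the convergence established in Theorem~\ref{thm:existence}. Your observation that the uniform ($C^\alpha$) convergence of $\eta_n$ preserves the pointwise equality $|\eta_\infty|=1$ is in fact slightly cleaner than the paper's appeal to pointwise a.e.\ convergence along a further subsequence. The one step that fails as written is the nonemptiness argument in the case $A_0=0$: the corollary allows any $A_0\in\R$, and no circle has signed area zero, so your competitor does not exist there. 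The paper covers this case by taking $\gamma$ to be a figure eight (an immersion with $A(\gamma)=0$) together with $\eta=\nu$; with that substitution your proof is complete.
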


\begin{proof}
If that $A_0\neq 0$, then let $\gamma$ be a circle with area $A_0$  (parametrized clockwise or counterclockwise depending on the sign of $A_0$) and let $\eta = \nu$ be a unit normal vector field. Then $(\gamma, \eta)\in \mathcal{A}$. 
If instead $A_0=0$, let $\gamma$ parametrize a figure eight and again let $\eta = \nu$ be a unit normal vector field. Also in this case $(\gamma, \eta)\in \mathcal{A}$. 
The assertion follows from the direct method in the calculus of variations applied to minimizing sequences $(\gamma_k, \eta_k)_{k\in \mathbb{N}}$ with $\gamma_k$ of vanishing mean value since the constraints (A) and (S) are continuous with respect to the convergence established in the proof of Theorem~\ref{thm:existence}, that is, strong convergence in $H_{per}^1\times L^2$ which implies, up to a further subsequence, convergence pointwise a.e.\ for $\eta_k$. Since $|\eta|=1$ almost everywhere, the $L^2$--bound for $\eta$ is immediate.
\end{proof}

Recall that $E$ defined in~\eqref{EnergyLM} does not include the penalization of the length. 

\begin{corollary}[$E$ with constraints]
Fix $\delta$, $\lambda$, $L_0$, $A_0\in \R$ with $\lambda>0$, $L_0>0$ and $A_0\in [-L_0^2/4\pi, L_0^2/4\pi]$. Then the variational problem 
\begin{align*}
 \text{ minimize } E\text{ in } \mathcal{A} = \{ (\gamma, \eta)\in H_{imm}^2([0,2\pi];\R^2) \times H_{per}^1([0,2\pi];\R^2)\colon L(\gamma) = L_0,\, A(\gamma) = A_0, \, S(\eta) = 0 \}
\end{align*}
has a solution.
\end{corollary}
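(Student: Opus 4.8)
The plan is to run the direct method exactly as in the proof of Theorem~\ref{thm:existence}, the only genuinely new point being nonemptiness of the admissible class $\mathcal{A}$. First I would check that $\mathcal{A}\neq\emptyset$ for \emph{every} $A_0$ in the isoperimetric range $[-L_0^2/(4\pi),L_0^2/(4\pi)]$. In contrast to the preceding corollary the length is now constrained to the fixed value $L_0$, so a circle of circumference $L_0$ realizes only the extremal area $A_0=L_0^2/(4\pi)$, and non-circular competitors are needed for $|A_0|<L_0^2/(4\pi)$. I would use stadium curves consisting of two parallel segments of length $\ell$ closed off by two semicircles of radius $r$ subject to $2\ell+2\pi r=L_0$; their enclosed area equals $L_0 r-\pi r^2$, which depends continuously and strictly monotonically on $r\in[0,L_0/(2\pi)]$ and sweeps the interval $(0,L_0^2/(4\pi)]$. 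Reversing the orientation yields every value in $[-L_0^2/(4\pi),0)$, and a rescaled figure eight of length $L_0$ handles $A_0=0$. Each such curve is of class $C^{1,1}\subset H^2$ and regular, hence lies in $H^2_{imm}$, and the unit normal field $\eta=\nu$ satisfies $S(\eta)=0$, so that $(\gamma,\eta)\in\mathcal{A}$.

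Next I would take a minimizing sequence $(\gamma_k,\eta_k)$ in $\mathcal{A}$ with $E(\gamma_k,\eta_k)\le C_0$. Using the reparametrization invariance~\eqref{InvarianceReparametrization}, which also leaves $L$, $A$ and the pointwise constraint $|\eta|=1$ untouched, I would assume the constant--speed normalization $|\partial_x\gamma_k|=L_0/(2\pi)$, and after a translation (which preserves $L$ and $A$ for closed curves) assume $\int_0^{2\pi}\gamma_k\dv{x}=0$. The estimate~\eqref{estimate-product} bounds $\int_{\gamma_k}\kappa_k^2\dv{s}$ by $2(\lambda+\delta^2)C_0/\lambda$, and since the length is now fixed the argument of Theorem~\ref{thm:existence} yields a uniform $H^2$--bound on $\gamma_k$. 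For the director field the constraint $|\eta_k|=1$ gives the $L^2$--bound $\int_0^{2\pi}|\eta_k|^2\dv{x}=2\pi$ directly, while the Frank term bounds $\int_0^{2\pi}|\partial_x\eta_k|^2\dv{x}$, so that $\eta_k$ is uniformly bounded in $H^1$. Passing to a subsequence, $(\gamma_k,\eta_k)\rightharpoonup(\gamma_\infty,\eta_\infty)$ weakly in $H^2_{per}\times H^1_{per}$ and strongly in $C^{1,\alpha}\times C^{\alpha}$.

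It then remains to pass the three constraints to the limit and to invoke lower semicontinuity. Since $L$ and $A$ are continuous with respect to $C^1$--convergence, I obtain $L(\gamma_\infty)=L_0$ and $A(\gamma_\infty)=A_0$; in particular $|\partial_x\gamma_\infty|=L_0/(2\pi)>0$, so $\gamma_\infty\in H^2_{imm}$. Uniform convergence $\eta_k\to\eta_\infty$ gives $|\eta_\infty|^2=\lim_k|\eta_k|^2=1$ pointwise, hence $S(\eta_\infty)=0$ and $(\gamma_\infty,\eta_\infty)\in\mathcal{A}$. The two terms of $E$ are sequentially lower semicontinuous with respect to this convergence by the same computation as in the proof of Theorem~\ref{thm:existence} (the length term being irrelevant here), so $E(\gamma_\infty,\eta_\infty)\le\liminf_k E(\gamma_k,\eta_k)=\inf_{\mathcal{A}}E$, and $(\gamma_\infty,\eta_\infty)$ is the desired minimizer.

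The main obstacle is the nonemptiness step: unlike all previous existence statements, the fixed length $L_0$ forces $\mathcal{A}$ to contain genuinely non-circular curves, and one must produce an $H^2$ immersion realizing each prescribed pair $(L_0,A_0)$ in the isoperimetric range while keeping $|\eta|=1$. The stadium and figure--eight construction above settles this by an explicit continuous family together with the intermediate value theorem; the remaining compactness and lower semicontinuity arguments are essentially identical to those already carried out for $E_{LM}$.
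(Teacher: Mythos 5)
Your proposal is correct and follows essentially the same route as the paper: nonemptiness of $\mathcal{A}$ by an explicit family of competitors sweeping the isoperimetric range (the paper uses ellipses of perimeter $L_0$ plus a figure eight for $A_0=0$; you use stadium curves, which has the minor advantage of a closed-form area $L_0r-\pi r^2$ making the intermediate value argument explicit), followed by the direct method with the compactness, constraint continuity, and lower semicontinuity already established in the proof of Theorem~\ref{thm:existence}. No gaps.
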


\begin{proof}
For $\vert A_0\vert = L_0^2/4\pi$ let $\gamma$ parametrize 
(clockwise or counterclockwise depending on the sign of $A_0$)
a circle with area $A_0$ and let $\eta = \nu$ be a unit normal vector field. 
If $A_0=0$ let $\gamma$ be a figure eight with length $L_0$. 
In the case $0<\vert A_0\vert\leq L_0^2/4\pi$ let $\gamma$ parametrize an ellipse with length $L_0$ and area $A_0$ (again clockwise or counterclockwise depending on the sign of $A_0$) and choose $\eta=\nu$. In all cases, $(\gamma, \eta)\in \mathcal{A}$. The assertion follows from the direct method in the calculus of variations since the constraints (L), (A), (S) are continuous with respect to the convergence established in the proof of Theorem~\ref{thm:existence}.
\end{proof}

\section{Regularity of critical points}\label{Section:Regularity}

In view of the invariance under reparametrization~\eqref{InvarianceReparametrization}, the natural question concerning regularity addresses the regularity of solutions $(\gamma, \eta)$ for which $\gamma$ has been parametrized proportional to arc length. In this section we prove regularity of critical points, that is, for solutions of the Euler-Lagrange equations for $E_{LM}$ and for the corresponding necessary conditions for minima that result from the theorem on Lagrangian multipliers in the presence of some of the constraints (L), (A), (S) for $E_{LM}$ or $E$. If one considers $E$, one has to include at least the constraint (L). For completeness, the Euler-Lagrange equations for the functionals are derived in Lemma~\ref{ELEqn}. Set
\begin{align*}
 C_{per}^\infty([0,2\pi];\mathbb{R}^2) = \{ \gamma\in C^\infty([0,2\pi];\R^2) \colon \gamma^{(\ell)}(0) =\gamma^{(\ell)}(2\pi) \text{ for all }\ell\in \N_0 \}\,.
\end{align*}

\begin{remark}\label{RegularityGamma}
In this section, we focus on the regularity for $\kappa$ which implies regularity of $\gamma$. Indeed,  by the fundamental theorem of the local theory of curves in the plane, given $\kappa$ either of class $C^{k}([0,2\pi])$ or of class $W^{\ell,p}$, there exists, up to rigid motion of the plane, a unique regular curve $\widetilde{\gamma}$ either of class $C^{k+2}([0,2\pi])$ or of class $W^{\ell+2,p}$ parametrized by arc length 
with curvature $\kappa$
(see~\cite[page 19 and Ex.~9 page 24]{DoCarmoDiffGeo1976}).
Since we only consider critical points in $H^2_{imm}$, this curve coincides, up to a rigid motion,
with the given curve $\gamma$.
 Moreover, once regularity has been established in $W^{\ell+2,p}$, the same regularity follows in $W_{per}^{\ell+2,p}$ since we may extend all functions in the existence theorem by periodicity to $I=[-2\pi,4\pi]$ and argue on $I$.
\end{remark}

\begin{remark}
Notice that our results does not say anything on  regularity of the trace $\Gamma$. For example, we are not aware of sufficient conditions that guarantee that minimizers are simple closed curves.
\end{remark}

The proof of the regularity statement proceeds by duality,
as it is illustrated in the following lemma, that can be found for example in~\cite{AltFunctionalAnalysis2016}. Since the key quantity in the regularity statements is the term $\kappa + \delta \diverg_\gamma \eta$, we refer to $W^{k,\infty}$ regularity if this term is in $W^{k,\infty}$.

\begin{lemma}[{\cite[Corollary~6.13, Exercise~6.7]{AltFunctionalAnalysis2016}}]\label{LemmaAltDuality}
 Suppose that $\Omega\subset \R^n$ is open, $f\in L_{loc}^1(\Omega)$, $p\in (1,\infty]$, $1/p + 1/p'=1$, $m\in \mathbb{N}_0$, and that there exists a constant $C$ such that for all $k\in \mathbb{N}_0$ with $k\leq m$ and all $\zeta \in C_c^\infty(\Omega)$
 \begin{align*}
  \Bbar{ \int_\Omega f \partial^k \zeta \dv{x} } \leq C_0 \| \zeta \|_{L^{p'}(\Omega)}\,.
 \end{align*}
Then $f\in W^{m,p}(\Omega)$ and there exists a constant $C=C(m,C_0)$ with $\| f \|_{m,p}\leq C$.
\end{lemma}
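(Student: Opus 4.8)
The plan is to reduce the whole statement to the duality $\left(L^{p'}(\Omega)\right)^\ast = L^p(\Omega)$, applied once for each derivative of order at most $m$. First I would fix a multi-index $\alpha$ of order $|\alpha| = k \le m$ (in the one--dimensional setting relevant here there is just a single derivative of each order, so $\partial^k$ is unambiguous) and consider the linear functional
\[
T_\alpha \colon C_c^\infty(\Omega) \to \R, \qquad T_\alpha(\zeta) = \int_\Omega f\, \partial^\alpha \zeta \dv{x},
\]
which is well defined since $f\in L^1_{loc}(\Omega)$ and $\zeta$ has compact support. The hypothesis is exactly the estimate $\bigl| T_\alpha(\zeta)\bigr| \le C_0 \|\zeta\|_{L^{p'}(\Omega)}$ for every test function, so $T_\alpha$ is bounded for the $L^{p'}$--norm on the subspace $C_c^\infty(\Omega)$. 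Because $p \in (1,\infty]$ forces $p' < \infty$, this subspace is dense in $L^{p'}(\Omega)$, and I would extend $T_\alpha$ by continuity to a bounded linear functional on all of $L^{p'}(\Omega)$ with operator norm at most $C_0$.

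Next I would invoke the Riesz representation theorem for $L^p$--spaces. Since $\Omega$ is an open subset of $\R^n$, hence $\sigma$--finite, and $1 \le p' < \infty$, one has $\left(L^{p'}(\Omega)\right)^\ast = L^p(\Omega)$; this identification covers both the reflexive range $1 < p' < \infty$ and the endpoint $p' = 1$ (that is, $p = \infty$), where $\sigma$--finiteness is precisely what is needed. Thus there is a unique $g_\alpha \in L^p(\Omega)$ with $\|g_\alpha\|_{L^p(\Omega)} \le C_0$ and $T_\alpha(\zeta) = \int_\Omega g_\alpha \zeta \dv{x}$ for all $\zeta \in L^{p'}(\Omega)$, in particular for all $\zeta \in C_c^\infty(\Omega)$.

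Finally I would read off the conclusion: the identity $\int_\Omega f\, \partial^\alpha \zeta \dv{x} = \int_\Omega g_\alpha \zeta \dv{x}$ valid for every $\zeta \in C_c^\infty(\Omega)$ says, by definition, that $f$ has the weak derivative $\partial^\alpha f = (-1)^{|\alpha|} g_\alpha \in L^p(\Omega)$. Taking $\alpha = 0$ identifies $f$ with $g_0 \in L^p(\Omega)$; running the argument over all $\alpha$ with $|\alpha|\le m$ shows that every weak derivative of order at most $m$ exists and lies in $L^p(\Omega)$, whence $f\in W^{m,p}(\Omega)$, and summing the bounds $\|\partial^\alpha f\|_{L^p} \le C_0$ over the finitely many such $\alpha$ gives $\|f\|_{m,p} \le C$ with $C = C(m,n,C_0)$. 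Since the analytic content is entirely standard, the only genuine subtlety — and the step I would treat most carefully — is matching the two duality facts (density of $C_c^\infty$ in $L^{p'}$ and $\left(L^{p'}\right)^\ast = L^p$) to the admissible range of $p$: both ingredients break down exactly at $p=1$, which is why that value is excluded, while the endpoint $p=\infty$ is harmless on the $\sigma$--finite set $\Omega$.
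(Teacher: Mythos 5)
Your argument is correct: the paper does not prove this lemma at all but imports it verbatim from Alt's book, and your duality proof (boundedness of $\zeta\mapsto\int_\Omega f\,\partial^k\zeta\dv{x}$ on the dense subspace $C_c^\infty\subset L^{p'}$, extension, Riesz representation $\bigl(L^{p'}\bigr)^\ast=L^p$ for $p'<\infty$, and identification of the representing function with $(-1)^k\partial^k f$) is exactly the standard argument behind the cited result. Your remark that the excluded case $p=1$ is precisely where both the density and the duality ingredients fail is the right way to see why the hypothesis $p\in(1,\infty]$ is sharp.
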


\begin{proposition}[$L^\infty$ bounds for $E_{LM}$]\label{DualityArgumentLp}
Suppose that (H) holds. If a curve $\gamma\in H^2_{imm}([0,2\pi];\R^2)$ 
parametrized proportional to arc length together with a vector field $\eta \in H^1_{per}([0,2\pi];\R^2)$ is a critical point of $E_{LM}$, then $\gamma\in W_{per}^{2,\infty}([0,2\pi];\R^2)$ and $\eta\in W_{per}^{1,\infty}([0,2\pi];\R^2)$. Moreover, there exists a constant $C=C(\| \gamma \|_{H^2}, \| \eta \|_{H^1})$ with 
\begin{align*}
 \| \gamma \|_{W^{2,\infty}} + \| \eta \|_{W^{1,\infty}}\leq C(\| \gamma \|_{H^2}, \| \eta \|_{H^1})\,.
\end{align*}

\end{proposition}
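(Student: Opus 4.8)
The plan is to read off regularity from the two Euler--Lagrange equations derived in Lemma~\ref{ELEqn}, working in the gauge $|\partial_x\gamma|\equiv L(\gamma)/2\pi=:q$ (a constant which is bounded above and below away from $0$ by Lemma~\ref{bound-geo-quantities}) and abbreviating $w:=\kappa+\delta\diverg_\gamma\eta$, the quantity that controls every estimate. First I would use the equation obtained by varying $\eta$ with $\gamma$ frozen: since $\diverg_\gamma$ and $\nabla_\gamma$ are then fixed linear operators, the weak equation reads $\int_0^{2\pi}\scp{\delta\,w\,\tau+\lambda q^{-1}\partial_x\eta}{\partial_x\psi}\dv{x}=0$ for all $\psi\in H_{per}^1$, so that $\delta w\tau+\lambda q^{-1}\partial_x\eta$ is weakly constant, equal to some $c\in\R^2$ with $|c|\le C(\|\gamma\|_{H^2},\|\eta\|_{H^1})$. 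This gives the pointwise identities $\partial_x\eta=\tfrac{q}{\lambda}(c-\delta w\tau)$ and, in the tangential component, $\diverg_\gamma\eta=\lambda^{-1}(\scp{c}{\tau}-\delta w)$, whence $\kappa=w-\delta\diverg_\gamma\eta$. Consequently, once $w\in L^\infty$ is established, $\diverg_\gamma\eta,\kappa,\partial_x\eta\in L^\infty$, and since $\partial_{xx}\gamma=q^2\kappa\nu$ the whole statement follows. Thus everything reduces to proving $w\in L^\infty$.

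For this I would turn to the equation obtained by varying $\gamma$. Writing the first variation $D_\gamma E_{LM}(\gamma,\eta)[\phi]$, $\phi\in H_{per}^2$, and sorting the terms by the number of derivatives falling on $\phi$, the only second--order term comes from the leading part $q^{-2}\scp{\partial_{xx}\phi}{\nu}$ of $\delta\kappa$ and equals $\int_0^{2\pi}q^{-1}w\scp{\nu}{\partial_{xx}\phi}\dv{x}$; all remaining terms have the form $\int_0^{2\pi}\scp{P_1}{\partial_x\phi}\dv{x}$, where the coefficient $P_1$ is a sum of products $w\kappa$, $w^2$, $w\,\partial_x\eta$, $w\,\diverg_\gamma\eta$, $|\partial_x\eta|^2$ times bounded geometric factors built from $\tau,\nu,q$. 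Each such product is a product of two $L^2$ functions or of an $L^1$ function with a bounded one, so $P_1\in L^1$. Hence the weak $\gamma$--equation reads
\[
  \int_0^{2\pi} q^{-1}w\,\scp{\nu}{\partial_{xx}\phi}\dv{x}
  =-\int_0^{2\pi}\scp{P_1}{\partial_x\phi}\dv{x},\qquad P_1\in L^1,\ \ \forall\,\phi\in H_{per}^2.
\]

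The decisive step is the choice of test functions. A normal variation $\phi=\Phi\nu$ is \emph{not} admissible here, since $\nu\in H^1$ only and $\partial_{xx}(\Phi\nu)\notin L^2$ (this is exactly where a spurious $\kappa^2\Phi$, i.e.\ an uncontrollable $\kappa^3$, would enter through $\delta\kappa=q^{-2}\partial_{xx}\Phi+\kappa^2\Phi$). Instead I would test with $\phi=\Psi e$ for a fixed coordinate direction $e\in\{e_1,e_2\}$ and arbitrary $\Psi\in C_{per}^\infty$, which lies in $H_{per}^2$. Setting $f_e:=q^{-1}w\scp{\nu}{e}\in L^2$ and $P_1^e:=\scp{P_1}{e}\in L^1$, the equation becomes $\int_0^{2\pi}f_e\Psi''\dv{x}=-\int_0^{2\pi}P_1^e\Psi'\dv{x}$ for all $\Psi$, i.e.\ $f_e''=(P_1^e)'$ in the sense of periodic distributions; therefore $f_e'-P_1^e$ is constant and $f_e\in W_{per}^{1,1}\hookrightarrow C^0\subset L^\infty$, with $\|f_e\|_{L^\infty}\le C(\|\gamma\|_{H^2},\|\eta\|_{H^1})$ (the endpoint $p=1$ of the duality principle of Lemma~\ref{LemmaAltDuality}). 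Applying this to $e=e_1$ and $e=e_2$ gives $w\nu_1,w\nu_2\in L^\infty$, and since $|\nu|=1$ we have $w^2=q^2(f_{e_1}^2+f_{e_2}^2)$, so $w\in L^\infty$ with a quantitative bound. Inserting this into the identities of the first paragraph yields $\kappa,\partial_x\eta\in L^\infty$ and the claimed estimate.

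I expect the computation of the first variation in $\gamma$ to be the main obstacle: one must verify that the sole second--order coefficient is exactly $q^{-1}w\nu$ and that every lower--order coefficient is no worse than $L^1$. This is delicate precisely because $\kappa$ is a priori only in $L^2$, so a single extra curvature factor would destroy integrability; the fixed--direction test function $\phi=\Psi e$ is what keeps $\partial_{xx}\nu$ (hence that extra factor) out of the identity and reduces the problem to the benign structure above, after which the elementary $W^{1,1}$ regularity and the recombination $w^2=q^2\sum_i(w\nu_i)^2$ close the argument.
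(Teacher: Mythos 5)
Your argument is correct, but it reaches the conclusion by a genuinely different route than the paper, in both halves. For the $\gamma$--equation, the paper constructs for each scalar $g$ a test function $\varphi$ that is essentially a double antiderivative of $g\nu$ (with affine/quadratic corrections $A,B$ to restore periodicity), so that $\partial_{ss}\varphi=g\nu+B$ and the first variation yields $\int_\gamma w\,g\,\dv{s}\le C\|g\|_{L^1}$ directly, whence $w\in L^\infty$ by the duality Lemma~\ref{LemmaAltDuality} with $p=\infty$, $p'=1$. You instead test with $\Psi e_i$, deduce the distributional identity $\partial_x\bigl(q^{-1}w\nu_i\bigr)=P_1^{e_i}+\text{const}\in L^1$, hence $w\nu_i\in W^{1,1}\hookrightarrow C^0$, and recombine via $|\nu|=1$; this is sound (your parenthetical appeal to the ``endpoint $p=1$'' of Lemma~\ref{LemmaAltDuality} is not covered by that lemma as stated, which requires $p>1$, but your self-contained ODE argument makes it unnecessary) and in fact yields slightly more, namely $w\nu_i\in W^{1,1}$ with estimates. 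For the $\eta$--equation the paper runs a second duality argument with an antiderivative test function, whereas you observe the first integral $\delta w\tau+\lambda q^{-1}\partial_x\eta=c$, which immediately gives pointwise formulas for $\partial_x\eta$ and $\diverg_\gamma\eta$ in terms of $w$ and $\tau$ — arguably cleaner at this stage. What the paper's more systematic test-function/duality scheme buys is that it iterates essentially unchanged in the higher-order bootstrap (Propositions~\ref{DualityArgumentW1p}, Theorem~\ref{regularity-stationary-points}) and survives the addition of the constraints (A), (S), where the $\eta$--equation acquires the Lagrange-multiplier term $\psi^\ast$ and no longer admits your simple first integral; your componentwise trick and the identity $w^2=q^2\sum_i(w\nu_i)^2$ are tailored to the unconstrained $L^\infty$ step. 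Your diagnosis of why naive normal variations $\varphi=\Phi\nu$ fail (only $\nu\in H^1$ is available a priori) correctly identifies the same obstruction the paper's double-antiderivative construction is designed to circumvent.
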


\begin{proof}
By assumption $\vert\partial_x\gamma\vert=L(\gamma)/2\pi$. We first  prove 
$L^\infty$--regularity for the expression $\kappa+\delta \diverg\eta$. If $(\gamma, \eta)$ is a critical point of $E_{LM}$, then the first variation with respect to $\gamma$ vanishes and by~\eqref{variationELMgamma} for all $\varphi\in H_{per}^2$
\begin{align}
& \int_{\gamma} ( \kappa+\delta\diverg_\gamma \eta)\langle\partial_{ss}\varphi, \nu \rangle\dv{s} =\int_\gamma\Bigl(\frac{3}{2}\left(\kappa+\delta\diverg_\gamma \eta\right)^2 +\frac{\lambda}{2}\vert\partial_s\eta\vert^2-1\Bigr) \left\langle\tau,\partial_s\varphi\right\rangle\nonumber \\&-\delta\left(\kappa+\delta\diverg_\gamma \eta\right)\left\langle\partial_s\eta,\partial_s\varphi\right\rangle \dv{s}\,.\label{eq:definitionF}
\end{align}
We denote the right-hand side of~\eqref{eq:definitionF} by $F(\gamma,\eta,\varphi)$. Since $\gamma\in H_{imm}^2$, the $L^2$--norm of the curvature is bounded  and since $H^2\hookrightarrow C^{1}$ we find 
\begin{align*}
 \| \kappa+\delta\diverg_\gamma \eta \|_{L^1} \leq C(\gamma)  \| \kappa+\delta\diverg_\gamma \eta \|_{L^2}\leq C(\|\gamma\|_{H^2}, \|\eta\|_{H^1})
\end{align*}
and with H\"older's inequality in the last integral,
\begin{align*}
 |F(\gamma,\eta,\varphi)|\leq C \big( \| \kappa+\delta\diverg_\gamma \eta \|_{L^2}^2 + \| \partial_s \eta \|_{L^2}^2 + 1 \bigr) \| \partial_s \varphi\|_{L^\infty}\leq C(\|\gamma\|_{H^2}, \|\eta\|_{H^1}) \| \varphi \|_{W^{2,1}}\,.
\end{align*}
For $g\in C_{per}^\infty([0,2\pi])$ we define with $A$, $B\in \R^2$
\begin{align*}
 \varphi(x) = \frac{L(\gamma)^2}{(2\pi)^2} \int_0^x \int_0^y g(t)\nu(t) \dv{t}\dv{y} 
 + \left(\frac{L(\gamma)}{2\pi}x\right)A + \left(\frac{L(\gamma)^2}{8\pi^2} x^2\right)B
\end{align*}
where we choose $A$ and $B$ in such a way that $\varphi \in H_{per}^2$. The conditions are 
\begin{align*}
0 =  \varphi(0) & = \varphi(2\pi) = \frac{L(\gamma)^2 }{(2\pi)^2}\int_0^{2\pi} \int_0^y g(t)\nu(t) \dv{t}\dv{y} + L(\gamma)A + \left(\frac{L(\gamma)^2}{2}\right)B\,,\\
\frac{L(\gamma)}{2\pi}A =\partial_x \varphi(0) & = \partial_x \varphi(2\pi) = \frac{L(\gamma)^2 }{(2\pi)^2} \int_0^{2\pi} g(t)\nu(t) \dv{t} + \frac{L(\gamma)}{2\pi}A + \frac{L(\gamma)^2}{2\pi}B\,.
\end{align*}
The second equation determines $B$ and the first equation $A$ with 
\begin{align*}
 |B| &\leq \frac{1}{2\pi} \int_0^{2\pi} |g(t)|\dv{x}\leq C \| g \|_{L^1}\,,\\ |A|&\leq C(\|\gamma\|_{H^2}) \| g \|_{L^1} + C(\|\gamma\|_{H^2}) \int_0^{2\pi} \int_0^y |g(t)| \dv{t}\dv{y}\leq C(\|\gamma\|_{H^2})  \| g \|_{L^1}\,.
\end{align*}
Consequently $\varphi \in H_{per}^2$ with 
\begin{align}\label{defvarphi}
 \partial_{ss}\varphi = g\nu + B\,,\quad \| \varphi \|_{W^{2,1}}\leq C(\|\gamma\|_{H^2}) \| g \|_{L^1}\,.
\end{align}
We insert $\varphi$ in~\eqref{eq:definitionF} and find 
\begin{align*}
 \int_{\gamma} ( \kappa+\delta\diverg_\gamma \eta)\left( g + \langle B,\nu\rangle\right) \dv{s} = F(\gamma, \eta, \varphi)
\end{align*}
hence for all $g\in C_{per}^\infty([0,2\pi])$ with the estimate for $F(\gamma, \eta, \varphi)$
\begin{align}\label{EstimateLp}
& \int_{\gamma} ( \kappa+\delta\diverg_\gamma \eta) g\dv{s} \leq C(\|\gamma\|_{H^2}, \|\eta\|_{H^1}) \| \varphi \|_{W^{2,1}} + \| \kappa + \delta \diverg_\gamma \eta \|_{L^1} \| \scp{B}{\nu} \|_{L^\infty}\nonumber \\
 &\leq C(\|\gamma\|_{H^2}, \|\eta\|_{H^1}) \| g \|_{L^1}\,.
\end{align}
By duality, see Lemma~\ref{LemmaAltDuality},
\begin{align*}
 \kappa + \delta \diverg_\gamma\eta \in L^\infty\,,\quad \|  \kappa + \delta \diverg_\gamma\eta \|_{L^\infty}\leq C(\|\gamma\|_{H^2}, \|\eta\|_{H^1})\,.
\end{align*}
The variation~\eqref{variationELMeta} with respect to $\eta$ implies with $\lambda \neq 0$ and the $L^\infty$--bound just obtained 
\begin{equation}\label{eq:TestEq2}
\lambda \int_{\gamma} \left\langle\partial_s \eta, \partial_s\psi\right\rangle \dv{s}=
- \int_{\gamma} \delta (\kappa +\delta \diverg_\gamma\eta) \diverg\psi \dv{s} \leq C(\|\gamma\|_{H^2}, \|\eta\|_{H^1}) \| \psi \|_{W^{1,1}}\,.
\end{equation}
For $\sigma\in C_{per}^\infty([0,2\pi];\R^2)$ define 
\begin{align}\label{eq:Defpsi}
\psi(x)& = \frac{L(\gamma)}{2\pi} \int_0^x \sigma(t)\,\mathrm{d}t -\frac{L(\gamma)}{2\pi}Ax\,,\nonumber\\ 
A&=  \int_0^{2\pi}\sigma(t)\,\mathrm{d}t\,,\quad |A|\leq C\| \sigma\|_{L^1}\,,\quad  \| \psi \|_{W^{1,1}}\leq C(\|\gamma\|_{H^2}) \| \sigma \|_{L^1}\,.
\end{align}
We insert $\psi$ with $\partial_s \psi = \sigma-A$ in~\eqref{eq:TestEq2},
\begin{align}\label{EstimateEtaLp}
 \lambda \int_{\gamma} \langle\partial_s \eta,\sigma \rangle \dv{s} \leq  \lambda \int_{\gamma} \langle \partial_s \eta, A\rangle\dv{s} + C(\|\gamma\|_{H^2}, \|\eta\|_{H^1}) \| \sigma \|_{L^1} \leq C(\|\gamma\|_{H^2}, \|\eta\|_{H^1}) \| \sigma \|_{L^1}\,,
\end{align}
and conclude again by duality as in Lemma~\ref{LemmaAltDuality} that
\begin{align*}
 \partial_x \eta\in L^\infty\,,\quad \| \partial_x \eta \|_{L^\infty}\leq C(\|\gamma\|_{H^2}, \|\eta\|_{H^1})\,.
\end{align*}
Both estimates together imply $\kappa\in L^\infty([0,2\pi])$ and the assertion of the proposition in view of Remark~\ref{RegularityGamma}.
\end{proof}

\begin{proposition}[$W^{1,\infty}$ bounds for $E_{LM}$]\label{DualityArgumentW1p}
Suppose that (H) holds. If a curve $\gamma\in H^2_{imm}([0,2\pi];\R^2)$ 
parametrized proportional to arc length together with a vector field $\eta \in H_{per}^1([0,2\pi];\R^2)$ is a critical point of $E_{LM}$, then $\gamma\in W_{per}^{3,\infty}([0,2\pi])$ and $\eta\in W_{per}^{2,\infty}([0,2\pi];\R^2)$. Moreover, there exists a constant $C=C(\| \gamma \|_{H^2}, \| \eta \|_{H^1})$ with 
\begin{align*}
 \| \gamma \|_{W^{3,\infty}} + \| \eta \|_{W^{2,\infty}}\leq C(\| \gamma \|_{H^2}, \| \eta \|_{H^1}).
\end{align*}
\end{proposition}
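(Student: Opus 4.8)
The plan is to bootstrap the $L^\infty$ estimates of Proposition~\ref{DualityArgumentLp} by exactly one order, using the same duality principle (Lemma~\ref{LemmaAltDuality}) but with test functions carrying one extra derivative. Throughout write $\omega = \kappa + \delta\diverg_\gamma\eta$; by Proposition~\ref{DualityArgumentLp} we already know $\omega\in L^\infty$, $\partial_s\eta\in L^\infty$ and $\kappa\in L^\infty$, so that $\tau=\partial_s\gamma$ and $\nu$ lie in $W^{1,\infty}$. The decisive observation is that, with these $L^\infty$ bounds in hand, every coefficient appearing in the right-hand side $F(\gamma,\eta,\varphi)$ of \eqref{eq:definitionF} --- namely $\omega^2$, $|\partial_s\eta|^2$ and $\omega\,\partial_s\eta$ --- is now bounded in $L^\infty$. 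Hence $|F(\gamma,\eta,\varphi)|\le C(\|\gamma\|_{H^2},\|\eta\|_{H^1})\,\|\partial_s\varphi\|_{L^1}$, one order better than in the previous proposition, where the same quantity was controlled only by $\|\partial_s\varphi\|_{L^\infty}\le C\|\varphi\|_{W^{2,1}}$.

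First I would gain a derivative on $\omega$. Given $g\in C_c^\infty((0,2\pi))$, I construct $\varphi\in H^2_{per}$ with $\partial_s\varphi = g\nu$ up to a constant vector added to enforce $\varphi(0)=\varphi(2\pi)$; periodicity of $\partial_x\varphi$ is automatic since $g\nu$ is periodic, and the correction has modulus $\le C(\|\gamma\|_{H^2})\|g\|_{L^1}$, so that $\|\partial_s\varphi\|_{L^1}\le C\|g\|_{L^1}$. Since $\partial_s\nu = -\kappa\tau$, one computes $\partial_{ss}\varphi = (\partial_s g)\nu - g\kappa\tau$, which lies in $L^2$ precisely because $\kappa\in L^\infty$, and $\langle\partial_{ss}\varphi,\nu\rangle = \partial_s g$. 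Inserting this $\varphi$ into \eqref{eq:definitionF} gives $\int_\gamma \omega\,\partial_s g\dv{s}=F(\gamma,\eta,\varphi)$, whence $\bigl|\int_\gamma\omega\,\partial_s g\dv{s}\bigr|\le C\|g\|_{L^1}$. Combined with the $L^\infty$ bound on $\omega$ (the $k=0$ case of the hypothesis of Lemma~\ref{LemmaAltDuality}), Lemma~\ref{LemmaAltDuality} with $m=1$, $p=\infty$ yields $\omega\in W^{1,\infty}$ together with the bound $\|\omega\|_{W^{1,\infty}}\le C(\|\gamma\|_{H^2},\|\eta\|_{H^1})$.

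Next I would upgrade $\eta$, and here the order matters, since the argument consumes the regularity of $\omega$ just obtained. Starting from the $\eta$-variation \eqref{eq:TestEq2}, $\lambda\int_\gamma\langle\partial_s\eta,\partial_s\psi\rangle\dv{s} = -\delta\int_\gamma\omega\,\langle\partial_s\psi,\tau\rangle\dv{s}$, I integrate by parts on the right --- legitimate now that $\omega\tau\in W^{1,\infty}$ --- to get $\lambda\int_\gamma\langle\partial_s\eta,\partial_s\psi\rangle\dv{s} = \delta\int_\gamma\langle\psi,(\partial_s\omega)\tau+\omega\kappa\nu\rangle\dv{s}$, whose right-hand side is bounded by $C\|\psi\|_{L^1}$ because $\partial_s\omega,\omega,\kappa\in L^\infty$. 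Taking $\psi\in C_c^\infty((0,2\pi);\R^2)$ and invoking Lemma~\ref{LemmaAltDuality} once more gives $\partial_s\eta\in W^{1,\infty}$, that is $\eta\in W^{2,\infty}$. Finally $\delta\diverg_\gamma\eta=\delta\langle\partial_s\eta,\tau\rangle\in W^{1,\infty}$, so $\kappa=\omega-\delta\diverg_\gamma\eta\in W^{1,\infty}$; by Remark~\ref{RegularityGamma} this promotes $\gamma$ to $W^{3,\infty}_{per}$, and the claimed norm bound follows by tracking the constants through the two duality steps.

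The main obstacle I anticipate is the bookkeeping in the first step: producing an admissible periodic test function whose second arc-length derivative has normal component exactly $\partial_s g$ while keeping $\|\partial_s\varphi\|_{L^1}$ controlled, and checking $\varphi\in H^2_{per}$ --- which is exactly where the previously established fact $\kappa\in L^\infty$ is indispensable. Once this is set up the scheme is self-propelling, and the same mechanism could in principle be iterated to obtain higher regularity.
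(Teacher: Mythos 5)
Your proposal is correct and follows essentially the same route as the paper: the same choice of test function with $\partial_s\varphi = g\nu - A$ (constant correction for periodicity) so that $\langle\partial_{ss}\varphi,\nu\rangle=\partial_s g$, the same use of the $L^\infty$ bounds from the previous proposition to control the right-hand side by $\|g\|_{L^1}$, the same duality lemma to get $\kappa+\delta\diverg_\gamma\eta\in W^{1,\infty}$, and the same subsequent integration by parts in the $\eta$-equation to upgrade $\partial_s\eta$. The order of the two bootstrap steps and the final passage to $\kappa$ and $\gamma$ via the fundamental theorem of plane curves also match the paper's argument.
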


\begin{proof}
We argue as in the proof of Proposition~\ref{DualityArgumentLp} and define for $g\in C_{per}^\infty([0,2\pi])$ with $L(\gamma)=\| \partial_x\gamma \|$, $A\in \R^2$
\begin{align*}
 \varphi(x) =\frac{L(\gamma)}{2\pi} \int_0^x g(t)\nu(t) \dv{t} - \frac{L(\gamma)}{2\pi}Ax\,,\quad A = \int_0^{2\pi}g(t)\nu(t)\dv{t}\,,\quad |A| \leq  C \| g \|_{L^1}\,.
\end{align*}
Since $\nu\in H_{per}^1$, the function $\varphi\in H_{per}^2$, $\partial_s \varphi = g \nu - A$ is an admissible test function with 
\begin{align*}
\langle\partial_{ss}\varphi, \nu \rangle = \langle \partial_s g \nu - \kappa g \tau, \nu \rangle =\partial_s g\,,\quad \| \varphi \|_{W^{1,1}}\leq C(\gamma) \| g \|_{L^1}\,,
\end{align*}
and \eqref{eq:definitionF} can be written as 
\begin{align*}
  \int_{\gamma} ( \kappa+\delta\diverg_\gamma \eta)\partial_sg\, \mathrm{d}s =\int_\gamma\Bigl(\frac{3}{2}\left(\kappa+\delta\diverg_\gamma \eta\right)^2 +\frac{\lambda}{2}\vert\partial_s\eta\vert^2-1\Bigr) \left\langle\tau,\partial_s\varphi\right\rangle -\delta\left(\kappa+\delta\diverg_\gamma \eta\right)\left\langle\partial_s\eta,\partial_s\varphi\right\rangle \dv{s}\,.
\end{align*}
By the $L^\infty$--bounds in Proposition~\ref{DualityArgumentLp}, 
\begin{align}\label{EstimateW1p}
   \int_{\gamma} ( \kappa+\delta\diverg_\gamma \eta)\partial_sg\, \mathrm{d}s \leq C( \| \gamma \|_{H^2}, \| \eta \|_{H1}) \| g \nu - A \|_{L^1}\leq  C( \| \gamma \|_{H^2}, \| \eta \|_{H1}) \| g \|_{L^1}\,.
\end{align}
By duality, see Lemma~\ref{LemmaAltDuality}, the two bounds~\eqref{EstimateLp} and~\eqref{EstimateW1p} imply $\kappa + \delta \diverg_\gamma\eta\in W^{1,\infty}$. This additional regularity allows us a partial integration in the right-hand side of~\eqref{eq:TestEq2}, and we find for all $\psi = \sigma\in C_{per}^\infty([0,2\pi];\R^2)$
\begin{align}\label{EstimateEtaH1p}
 \begin{aligned}
&\lambda \int_{\gamma} \left\langle\partial_s \eta, \partial_s\sigma\right\rangle \dv{s}=
- \int_{\gamma} \delta (\kappa +\delta \diverg_\gamma\eta) \diverg\sigma \dv{s}\\
&=\int_{\gamma} \delta\scp{ \nabla_s (\kappa +\delta \diverg_\gamma  \eta)}{\sigma} \dv{s} +\int_{\gamma} \delta (\kappa + \delta \diverg_\gamma\eta) \kappa \scp{\sigma}{\nu}\dv{s}\\
& \leq C(\delta,\Vert \kappa +\delta \diverg_\gamma  \eta\Vert_{W^{1,\infty}}, \Vert\nu\Vert_{L^\infty},\Vert\kappa\Vert_{L^\infty})\Vert \sigma\Vert_{L^1}\,.
\end{aligned}
\end{align}
The estimates~\eqref{EstimateEtaLp} and~\eqref{EstimateEtaH1p} imply by duality $\partial_s\eta \in W^{1,\infty}$, that is $\eta \in W^{2,\infty}$, and consequently $\diverg_\gamma \eta\in W^{1,\infty}$ and $\kappa\in W^{1,\infty}$. The assertion follows by Remark~\ref{RegularityGamma}.
\end{proof}

After these preparations we are in a position to prove full regularity. 

\begin{theorem}[Regularity for critical points of $E_{LM}$]\label{regularity-stationary-points}
Suppose that (H) holds. If a curve $\gamma\in H^2_{imm}([0,2\pi];\R^2)$ parametrized proportional to arc length together with a vector field $\eta \in H_{per}^1([0,2\pi];\R^2)$ is a critical point of the functional $E_{LM}$, then $\gamma,\eta\in C_{per}^{\infty}([0,2\pi];\R^2)$. Moreover, for all $k\in \N$ there exists a constant $C_k=C_k(\| \gamma \|_{H^2}, \| \eta \|_{H^1})$ with 
\begin{align*}
 \| \gamma \|_{W^{k+2,\infty}} + \| \eta \|_{W^{k+1,\infty}}\leq C_k(\| \gamma \|_{H^2}, \| \eta \|_{H^1}).
\end{align*}
\end{theorem}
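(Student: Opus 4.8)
The plan is to prove Theorem~\ref{regularity-stationary-points} by a bootstrap argument, using Propositions~\ref{DualityArgumentLp} and~\ref{DualityArgumentW1p} as the base case and then inductively raising the regularity of the key quantity $w := \kappa + \delta \diverg_\gamma \eta$ one derivative at a time. By the previous two propositions we already know that $w \in W^{1,\infty}$, that $\eta \in W^{2,\infty}$, and that $\gamma \in W^{3,\infty}$, together with the corresponding quantitative bounds. The inductive hypothesis at level $k$ will assert that $w \in W^{k,\infty}$, $\eta \in W^{k+1,\infty}$, and $\gamma \in W^{k+2,\infty}$, with norms controlled by $\| \gamma \|_{H^2}$ and $\| \eta \|_{H^1}$; the goal is to advance to level $k+1$.

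The key steps, carried out in order, are as follows. First, I would revisit the weak Euler--Lagrange identity~\eqref{eq:definitionF} for the $\gamma$-variation and the identity~\eqref{eq:TestEq2}--\eqref{EstimateEtaH1p} for the $\eta$-variation. The crucial observation is that, once $w \in W^{k,\infty}$ and $\eta \in W^{k+1,\infty}$ are known, every factor appearing in the right-hand side $F(\gamma, \eta, \varphi)$ and in the $\eta$-equation --- namely $w$, $w^2$, $|\partial_s \eta|^2$, $\nabla_s w$, $\kappa$, $\nu$, $\tau$ --- is at least of class $W^{k,\infty}$, and products of such functions remain in $W^{k,\infty}$. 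Using the same test functions as in Proposition~\ref{DualityArgumentW1p} --- built from $g \nu$ so that $\langle \partial_{ss}\varphi, \nu\rangle = \partial_s g$ --- and integrating by parts $k$ additional times to move derivatives off $g$, I would obtain, for all $g \in C_{per}^\infty$, an estimate of the form
\begin{align*}
  \Bbar{\int_\gamma w\, \partial_s^{\,k+1} g \dv{s}} \leq C_{k+1}(\| \gamma\|_{H^2}, \| \eta\|_{H^1}) \| g \|_{L^1}\,,
\end{align*}
where each integration by parts is legitimate precisely because the boundary terms vanish by periodicity and the transferred derivatives land on factors already known to lie in $W^{k,\infty}$. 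Combined with the lower-order bounds from the inductive hypothesis, Lemma~\ref{LemmaAltDuality} then yields $w \in W^{k+1,\infty}$. Second, feeding this improved regularity of $w$ into the $\eta$-equation~\eqref{EstimateEtaH1p}, where the right-hand side now involves $\nabla_s w$ (of class $W^{k,\infty}$) and $\kappa$ (controlled via $w$ and $\diverg_\gamma \eta$), a further integration by parts and a second application of Lemma~\ref{LemmaAltDuality} upgrade $\partial_s \eta$ to $W^{k+1,\infty}$, i.e. $\eta \in W^{k+2,\infty}$. Finally, since $\kappa = w - \delta \diverg_\gamma \eta$ and both summands now lie in $W^{k+1,\infty}$, we get $\kappa \in W^{k+1,\infty}$, and Remark~\ref{RegularityGamma} promotes this to $\gamma \in W^{k+3,\infty}$, completing the induction. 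Letting $k \to \infty$ gives $\gamma, \eta \in C_{per}^\infty$.

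The main obstacle, and the point that warrants the most care, is the interdependence of $\kappa$, $\eta$, and $w$ at each stage: $\kappa$ is not an independent unknown but is tied to $\eta$ through $w$, so I must be careful to alternate the bootstrap between the $\gamma$-variation (which controls $w$) and the $\eta$-variation (which controls $\partial_s \eta$) rather than trying to gain regularity in either alone. In particular, one cannot improve $\eta$ without first improving $w$, and one cannot improve $w$ without the previous level of $\eta$; the correct ordering of the two duality arguments within each inductive step is essential. A secondary technical point is verifying that all the integrations by parts produce no boundary contributions --- this is guaranteed by working in the periodic classes $C_{per}^\infty$ and $W_{per}^{k,\infty}$ and by extending periodically as in Remark~\ref{RegularityGamma} --- and that the constants $C_k$ depend only on $\| \gamma \|_{H^2}$ and $\| \eta \|_{H^1}$, which follows because every higher norm entering the estimates has already been bounded in terms of these two quantities at the previous inductive level.
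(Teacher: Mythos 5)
Your proposal is correct and follows essentially the same route as the paper: an induction whose step tests the $\gamma$-variation with $\varphi$ built from $\partial_s^{j}g\,\nu$, integrates by parts on the right-hand side onto coefficients already known to be bounded, applies the duality lemma to upgrade $\kappa+\delta\diverg_\gamma\eta$, and only then repeats the argument for the $\eta$-variation with $\partial_s^{j}\sigma$ before recovering $\kappa$ and $\gamma$ via Remark~\ref{RegularityGamma}. The ordering you emphasize (improve $w$ first, then $\eta$) and the use of the lower-order estimates so that Lemma~\ref{LemmaAltDuality} applies for all orders up to the current one match the paper's proof.
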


\begin{proof}
The proof follows by induction based on Lemma~\ref{LemmaAltDuality}. Indeed, we prove that for all $m\in \mathbb{N}_0$, all $g\in C_{per}^\infty([0,2\pi])$, $\sigma\in C_{per}^\infty([0,2\pi];\R^2)$ and all $k\in \mathbb{N}_0$, $k\leq m$, 
\begin{align*}
  \int_{\gamma} ( \kappa+\delta\diverg_\gamma \eta)\partial_{s}^{k}g\dv{s} \leq C(\| \gamma\|_{H^2}, \| \eta \|_{H^1}) \| g \|_{L^1}\,,\quad 
  \lambda \int_{\gamma} \left\langle\partial_s \eta, \partial_{s}^{k}\sigma \right\rangle \dv{s} \leq C(\| \gamma\|_{H^2}, \| \eta \|_{H^1}) \| g \|_{L^1}\,.
\end{align*}
Then $\kappa + \delta \diverg_\gamma\eta\in W_{per}^{m,\infty}$, $\eta\in W_{per}^{m+1,\infty}$, $\kappa \in W_{per}^{m,\infty}$ and $\gamma\in W_{per}^{m+2,\infty}$ together with the corresponding estimates.

By~\eqref{EstimateLp}, \eqref{EstimateW1p}, \eqref{EstimateEtaLp}, \eqref{EstimateEtaH1p} the assertion holds for $m=1$ and Proposition~\ref{DualityArgumentW1p} states the corresponding regularity, $\gamma\in W_{per}^{3,\infty}$, $\kappa\in W_{per}^{1,\infty}$ and $\eta\in W_{per}^{2,\infty}$ together with the estimate. Suppose now that $m\geq 2$ and that the assertion holds for $m-1$. We need to establish the estimates for $k=m$ and assume that $\kappa+\diverg_\gamma\eta\in W_{per}^{k-1,\infty}$, $\eta\in W_{per}^{k,\infty}$, $\gamma\in W_{per}^{k+1,\infty}$, $\nu\in W_{per}^{k,\infty}$ together with the estimate. For $g\in C_{per}^\infty([0,2\pi])$ we use $\varphi = \partial_s^{k-2} g\nu$ as a test function in~\eqref{eq:definitionF} and calculate first \begin{align*}
 \langle \partial_{ss}\varphi, \nu \rangle & =  \langle \partial_{s}(\partial_s^{k-1} g \nu + \partial_s^{k-2}g \partial_s \nu), \nu \rangle = \partial_{s}^kg + 2  \langle \partial_{s}^{k-1}g \partial_s \nu, \nu \rangle +  \langle \partial_s^{k-2}g\partial_{ss}\nu, \nu \rangle \\ & = \partial_{s}^kg -  \langle\partial_s^{k-2} g\partial_{s}(\kappa\tau), \nu \rangle = \partial_{s}^kg - \kappa^2 \partial_s^{k-2}g\,.
\end{align*}
From~\eqref{eq:definitionF} we obtain with $\langle\partial_s \varphi, \tau \rangle = \langle \partial_s^{k-1} g \nu +\partial_s^{k-2} g \partial_s \nu, \tau\rangle = -\kappa \partial_s^{k-2}g$
\begin{align*}
& \int_{\gamma} ( \kappa+\delta\diverg_\gamma \eta)\partial_{s}^kg\dv{s}  = \int_{\gamma} \kappa^2 ( \kappa+\delta\diverg_\gamma \eta)\partial_s^{k-2}g\dv{s}\\ &\qquad  + \int_\gamma\Bigl(\frac{3}{2}\left(\kappa+\delta\diverg_\gamma \eta\right)^2 + \frac{\lambda}{2}\vert\partial_s\eta\vert^2-1\Bigr) (-\kappa) \partial_s^{k-2}g -\delta\left(\kappa+\delta\diverg_\gamma \eta\right)\left\langle\partial_s\eta,\partial_s^{k-1} g \nu + \partial_s^{k-2}g \partial_s \nu\right\rangle \dv{s}\,.
\end{align*}
In view of the regularity already established, we may integrate by parts in the terms involving derivatives of $g$ on the right-hand side and obtain together with the estimates that have been established
\begin{align}\label{EstimateHkp}
 \int_{\gamma} ( \kappa+\delta\diverg_\gamma \eta) \partial_{s}^k g \, \dv{s} \leq C(\|\gamma\|_{H^2}, \|\eta\|_{H^1}) \| g \|_{L^1}\,.
\end{align}
Since this estimate holds for all $k\leq m$ we conclude by duality $\kappa + \delta \diverg_\gamma\eta\in W^{m,\infty}$, see Lemma~\ref{LemmaAltDuality} together with the corresponding estimates.
Finally fix $\sigma\in C_{per}^\infty([0,2\pi];\R^2)$; we return to~\eqref{EstimateEtaH1p} and use the test function $\partial_s^{k-1} \sigma$ to obtain
\begin{align}\label{EstimateEtaHkp}
 \begin{aligned}
\lambda \int_{\gamma} &\left\langle\partial_s \eta, \partial_{s}^{k}\sigma\right\rangle \dv{s}=
- \int_{\gamma} \delta (\kappa +\delta \diverg_\gamma\eta) \diverg_\gamma\partial_s^{k-1}\sigma \dv{s}\\
&=\int_\gamma \delta\scp{ \nabla_s (\kappa +\delta \diverg_\gamma  \eta)}{\partial_s^{k-1}\sigma} \dv{s} +\int_\gamma \delta (\kappa + \delta \diverg_\gamma\eta) \kappa \scp{\partial_s^{k-1}\sigma}{\nu}\dv{s} \\ & \leq C(\Vert \kappa +\delta \diverg_\gamma  \eta\Vert_{W^{k,\infty}}, \Vert\nu\Vert_{W^{k-1,\infty}},\Vert\kappa\Vert_{W^{k-1,\infty}})\Vert \sigma\Vert_{L^1}\leq  C(\| \gamma \|_{H^2} + \| \eta \|_{H^1})\Vert \sigma\Vert_{L^1}\,.
\end{aligned}
\end{align}
Here all integrations by parts are justified on the right-hand side in view of the regularity already established. Since this estimate holds for all $k\leq m$, $\partial_s\eta\in W^{m,\infty}$, $\kappa\in W^{m,\infty}$ and therefore $\gamma\in W^{m+2,\infty}$, see Remark~\ref{RegularityGamma}.
\end{proof}

\begin{corollary}\label{cor:circle}
Fix $\delta$, $\lambda\in \R$ with $\lambda>0$. There exist $(\gamma, \eta)\in H^2_{imm}([0,2\pi];\mathbb{R}^2)\times H_{per}^1([0,2\pi];\mathbb{R}^2)$ with $\gamma$ parametrized by arc length which satisfy equality in~\eqref{lowerboundenergy}. In fact, any such $\gamma$ is a simple  curve and the trace of $\gamma$ is a circle of radius $\sqrt{\lambda}/\sqrt{2(\lambda+\delta^2)}$. The vector field $\eta$ is uniquely defined if one imposes the additional assumption~\eqref{eq:MeanValN}. In this case, it is a normal vector field given by $\eta = \delta/(\lambda+\delta^2)\nu$. Consequently the variational problem for $E_{LM}$ together with the constraints~\eqref{eq:MeanValGamma}
and~\eqref{eq:MeanValN} has a unique minimizer which is parametrized by arc length. 
\end{corollary}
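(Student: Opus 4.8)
The plan is to identify the equality cases in the chain of estimates that proves Lemma~\ref{lem:lower-bound-energy} and to show that together they determine $(\gamma,\eta)$ up to the symmetries left free by (H). For existence I would exhibit the candidate directly: let $\gamma$ be the circle of radius $r=\sqrt{\lambda}/\sqrt{2(\lambda+\delta^2)}$ centered at the origin, parametrized proportionally to arc length, and set $\eta=\tfrac{\delta}{\lambda+\delta^2}\nu$. From the Frenet relation $\partial_s\nu=-\kappa\tau$ one gets $\diverg_\gamma\eta=-\tfrac{\delta}{\lambda+\delta^2}\kappa$, hence $\kappa+\delta\diverg_\gamma\eta=\tfrac{\lambda}{\lambda+\delta^2}\kappa$ and $|\nabla_\gamma\eta|^2=\tfrac{\delta^2}{(\lambda+\delta^2)^2}\kappa^2$; inserting $\kappa\equiv 1/r$ and $L=2\pi r$ gives $E_{LM}(\gamma,\eta)=\tfrac{\lambda}{2(\lambda+\delta^2)}\int_\gamma\kappa^2\dv{s}+L=2\pi\sqrt{2\lambda/(\lambda+\delta^2)}$, which is equality in~\eqref{lowerboundenergy}. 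This pair also meets~\eqref{eq:MeanValGamma} and~\eqref{eq:MeanValN}, since $\int_\gamma\nu\dv{s}=0$ over a full circle.

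For the converse I would assume $(\gamma,\eta)$ realizes equality in~\eqref{lowerboundenergy} and read off each tight inequality used in Lemma~\ref{lem:lower-bound-energy}. Equality in Fenchel's theorem $\int_\gamma|\kappa|\dv{s}\ge 2\pi$ forces $\gamma$ to be a simple convex curve, equality in the H\"older step~\eqref{step-with-Hoelder} forces $|\kappa|$ to be constant, and a convex closed curve of constant curvature is a circle; equality in the arithmetic--geometric step $b/L+L\ge 2\sqrt{b}$ then fixes $L=\pi\sqrt{2\lambda/(\lambda+\delta^2)}$ and hence $r=L/2\pi=\sqrt{\lambda}/\sqrt{2(\lambda+\delta^2)}$. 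This proves the statement about the trace. The energetic equalities in~\eqref{estimate-product} must also hold a.e.: rewriting the Young step~\eqref{step-with-young} as $\tfrac12\big(\sqrt{\varepsilon}\,\delta\kappa+\diverg_\gamma\eta/\sqrt{\varepsilon}\big)^2\ge 0$ with $\varepsilon=1/(\lambda+\delta^2)$, equality is equivalent to the pointwise identity $\diverg_\gamma\eta=-\tfrac{\delta}{\lambda+\delta^2}\kappa$, while equality in $(\diverg_\gamma\eta)^2\le|\partial_s\eta|^2$ forces $\partial_s\eta$ to be tangential, so that $\partial_s\eta=(\diverg_\gamma\eta)\tau=-\tfrac{\delta}{\lambda+\delta^2}\kappa\tau$.

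It then remains to integrate. Because $\partial_s\big(\tfrac{\delta}{\lambda+\delta^2}\nu\big)=-\tfrac{\delta}{\lambda+\delta^2}\kappa\tau=\partial_s\eta$, the difference $\eta-\tfrac{\delta}{\lambda+\delta^2}\nu$ is a constant vector $C$, i.e.\ $\eta=\tfrac{\delta}{\lambda+\delta^2}\nu+C$. Imposing~\eqref{eq:MeanValN}, equivalently $\int_\gamma\eta\dv{s}=0$, together with $\int_\gamma\nu\dv{s}=0$ gives $CL=0$, whence $C=0$ and $\eta=\tfrac{\delta}{\lambda+\delta^2}\nu$; this is the claimed uniqueness. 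Finally, since Lemma~\ref{lem:lower-bound-energy} shows the right-hand side of~\eqref{lowerboundenergy} bounds $E_{LM}$ from below on the whole admissible class and the circle attains it, that value is the minimum; every minimizer realizes equality and is therefore the circle of radius $r$ carrying $\eta=\tfrac{\delta}{\lambda+\delta^2}\nu$, unique up to the rigid rotations not fixed by (H2)--(H4).

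I expect the main obstacle to be the geometric rigidity rather than the algebra: concluding that $\gamma$ is a genuinely once-traversed circle (and not a multiply covered one) requires combining the \emph{equality} case of Fenchel's theorem, which gives convexity and thus total curvature exactly $2\pi$, with the constancy of $|\kappa|$ from H\"older, and one must keep track of the fact that the reduction to total curvature $\ge 2\pi$ in Lemma~\ref{bound-geo-quantities} is orientation dependent. The remaining delicate point is to argue that the integrated inequalities become equalities \emph{pointwise a.e.}, which is what legitimizes passing from the variational bound to the pointwise identity $\diverg_\gamma\eta=-\tfrac{\delta}{\lambda+\delta^2}\kappa$ and its subsequent integration.
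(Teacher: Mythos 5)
Your proposal is correct and shares the paper's overall skeleton (trace the equality cases through the chain of inequalities in Lemmas~\ref{bound-geo-quantities} and~\ref{lem:lower-bound-energy}), but it executes the two delicate steps differently. First, to rule out multiply covered circles you invoke the \emph{equality case} of Fenchel's theorem to get convexity and hence simplicity; the paper instead keeps the possibility of an $n$--fold covered circle, computes $E_{LM}=(n^2+1)L(\gamma)$ for such a configuration, and concludes $n=1$ by comparing with the value $2L(\gamma)$ forced by equality in~\eqref{lowerboundenergy}. Your route is shorter and has the bonus that convexity upgrades ``$|\kappa|$ constant'' to ``$\kappa$ constant'' without appealing to Theorem~\ref{regularity-stationary-points}, but note that the reference \cite[Section~5.7, Theorem~3]{DoCarmoDiffGeo1976} states the rigidity only for \emph{simple} closed curves, so you would need the (standard, but not cited) extension of the equality case to immersed closed plane curves — exactly the issue the paper's self-contained energy computation is designed to avoid. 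Second, to recover $\eta$ you observe that $\partial_s\eta=-\tfrac{\delta}{\lambda+\delta^2}\kappa\tau=\partial_s\bigl(\tfrac{\delta}{\lambda+\delta^2}\nu\bigr)$ and integrate directly, whereas the paper writes $\eta=a\tau+b\nu$ and solves the resulting constant--coefficient ODE system explicitly; your shortcut is cleaner and reaches the same conclusion $\eta=\tfrac{\delta}{\lambda+\delta^2}\nu+C$ with $C=0$ from~\eqref{eq:MeanValN}. All computations (the value of the radius, the verification that the candidate attains equality, and the pointwise identities from the Young and Cauchy--Schwarz steps) check out.
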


\begin{proof}
As in Lemma~\ref{bound-geo-quantities} we assume that $\delta \geq 0$. If the functions $(\gamma,\eta)$ satisfy equality in~\eqref{lowerboundenergy}, then equality holds in all inequalities in the derivation of the lower bound for the energy. 
Equality in H\"older's inequality in~\eqref{step-with-Hoelder} implies that $|\kappa|$ is constant and since $\kappa$ is by Theorem~\ref{regularity-stationary-points} smooth, $\kappa$ is constant and, after a change of the orientation of $\gamma$ and of the sign of $\eta$, positive. Thus $\gamma$ defines a circle, possibly multiply covered, and equality in Youngs's inequality in~\eqref{step-with-young} leads in view of the lower bound to
\begin{align*}
 \frac{\delta}{\sqrt{\lambda+\delta^2}} \kappa = -\diverg_\gamma \eta \cdot \sqrt{\lambda+\delta^2}\quad \Leftrightarrow\quad \diverg_\gamma \eta = -\frac{\delta}{\lambda + \delta^2}\cdot \kappa\,.
\end{align*}
In particular, $\diverg_\gamma \eta$ is constant. In the last estimate in~\eqref{estimate-product} we find in case of equality that
\begin{align*}
|\partial_s \eta \cdot \tau| =  |\diverg_\gamma \eta| = |\nabla_\gamma \eta| = |\partial_s \eta \otimes \tau| = |\partial_s \eta|
\end{align*}
and $\partial_s \eta$ is parallel to $\tau$ with constant length. Minimization in $L(\gamma)$ in the lower bound in Lemma~\ref{lem:lower-bound-energy} implies
\begin{align*}
 \frac{1}{L(\gamma)^2} = \frac{\lambda+\delta^2}{2\lambda \pi^2}
\end{align*}
and with $n$ the number of coverings of the circle (with $n=1$ if $\gamma$ is a simple closed curve and therefore a circle)
\begin{align*}
 L(\gamma)^2 = (2n\pi R)^2 = \frac{2\lambda\pi^2}{\lambda+\delta^2}\quad \Leftrightarrow\quad R^2 = \frac{\lambda}{2n^2(\lambda + \delta^2)}\,.
\end{align*}
Since we assume $\kappa\geq 0$,
\begin{align*}
 \kappa = \frac{1}{R} = \sqrt{\frac{2n^2(\lambda+\delta^2)}{\lambda}}\,,\quad \frac{\delta \kappa}{\sqrt{\lambda + \delta^2}} =\frac{\delta \sqrt{2n^2(\lambda+\delta^2})}{\sqrt{\lambda + \delta^2}\sqrt{\lambda}} = | \diverg_{\gamma} \eta| \sqrt{\lambda+\delta^2}
\end{align*}
and hence
\begin{align*}
 |\diverg_{\gamma}\eta | =\frac{\sqrt{2n^2}\delta}{\sqrt{\lambda+\delta^2}\sqrt{\lambda}}  = \kappa \cdot \frac{\delta}{\lambda+\delta^2}\,.
\end{align*}
We compute the energy and find
\begin{align*}
E_{LM} & = \frac{1}{2}\int_\gamma (\kappa + \delta \diverg_\gamma \eta)^2 \dv{s} + \frac{\lambda}{2}\int_\gamma |\nabla_\gamma \eta|^2\dv{s} +L(\gamma)\\
& = \frac{1}{2} \int_\gamma \Bsqb{ \barg{ 1 - \frac{\delta^2}{\lambda+\delta^2}}\kappa }^2 \dv{s} + \frac{\lambda}{2} \int_\gamma \frac{2n^2\delta^2}{\lambda(\lambda+\delta^2)}\dv{s}+L(\gamma)\\
& = \frac{L(\gamma)}{2} \Barg{ \frac{\lambda}{\lambda+\delta^2} }^2 \cdot \frac{2n^2 (\lambda + \delta^2)}{\lambda} + \frac{L(\gamma)}{2} \cdot \frac{2n^2\delta^2}{\lambda+\delta^2}+L(\gamma)\\
&= \frac{L(\gamma)}{2} \Barg{\frac{\lambda}{\lambda+\delta^2} + \frac{\delta^2}{\lambda+\delta^2}} \cdot 2n^2+L(\gamma) = (n^2+1)L(\gamma) = (n^2+1)\pi\Barg{ \frac{2\lambda}{\lambda+\delta^2} }^{1/2}
\end{align*}
and hence $n=1$. Thus $\gamma$ is a circle. It remains to determine $\eta$. For simplicity we consider the arc length parametrization of $\gamma$. By assumption, $\tau$ and $\nu$ are in $H_{per}^1[0,L(\gamma)])$ and we may write $\eta = a \tau + b \nu$ with $a$, $b\in H_{per}^1([0,L(\gamma)])$. Since $\partial_s \eta = c_0\tau$, $c_0\in\mathbb{R}$,
the functions $a$ and $b$ satisfy
\begin{align*}
\partial_s \eta = (\partial_s a - \kappa b)\tau + (\kappa a + \partial_s b) \nu = c_0 \tau\quad \text{ or } \quad \left(\begin{array}{cl} 0 & 1 \\ 1 & 0  \end{array}\right)\left(\begin{array}{c} \partial_s a \\ \partial_s b   \end{array}\right) + \kappa \left(\begin{array}{c} a \\ -b  \end{array}\right) = \left(\begin{array}{c} 0 \\  c_0  \end{array}\right)\,.
\end{align*}
This is an inhomogeneous system of linear differential equations with constant coefficients and the general solution is given by 
\begin{align*}
 \left(\begin{array}{c} a(s) \\ b(s) \end{array}\right)  = c_1 \left(\begin{array}{c} \sin(\kappa s) \\ \cos(\kappa s)  \end{array}\right) + c_2 \left(\begin{array}{c} \cos(\kappa s) \\ -\sin(\kappa s)  \end{array}\right) + \left(\begin{array}{c} 0 \\ -c_0/\kappa \end{array}\right)
\end{align*}
and hence with $\gamma(s) = (1/\kappa) (\cos(\kappa s), \sin(\kappa s))$
\begin{align*}
 \eta &=-\frac{ c_0 }{\kappa} \nu + (c_1 \sin(\kappa s) + c_2 \cos(\kappa s))\left( \begin{array}{c} -\sin(\kappa s) \\ \cos(\kappa s) \end{array}\right) + (c_1 \cos(\kappa s) -c_2 \sin(\kappa s)) \left(\begin{array}{c} -\cos(\kappa s) \\ -\sin(\kappa s) \end{array}\right) \\&=-\frac{ c_0 }{\kappa} \nu- c_1 e_1 + c_2 e_2\,. 
\end{align*}
If~\eqref{eq:MeanValN} holds, then $c_1=c_2=0$ and $\eta = -(c_0/\kappa)\nu$. Since $\diverg_\gamma\eta = c_0 = -\delta\kappa/(\lambda + \delta^2)$, we conclude $c_0 =- \delta\kappa/(\lambda + \delta^2)$ and this is the assertion of the corollary.
\end{proof}

We consider now the regularity of critical points of the constrained problems.

\begin{proposition}[$L^\infty$--bounds for $E_{LM}$ with constraints]\label{DualityArgumentConstraintLp}
Suppose that (H) holds. If a curve $\gamma$ of class $H^2_{imm}([0,2\pi];\R^2)$ 
parametrized proportional to arc length together with a vector field $\eta \in H^1_{per}([0,2\pi];\R^2)$ is a critical point of $E_{LM}$ subject to the constraints (A) and (S), then $\gamma\in W_{per}^{2,\infty}([0,2\pi];\R^2)$ and $\eta\in W_{per}^{1,\infty}([0,2\pi];\R^2)$. Moreover, there exists a Lagrange multiplier $\psi^\ast\in H_{per}^{-1}([0,2\pi])$, $\psi^\ast= \psi_0^\ast + \partial_s \psi_1^\ast$ with $\psi_0^\ast$, $\psi_1^\ast\in L^2([0,2\pi])$ associated to the constraint (S) which satisfies $\psi_1^\ast\in L^\infty([0,2\pi])$ and there exists a constant $C=C(\| \gamma \|_{H^2}, \| \eta \|_{H^1}, \| \psi^\ast \|_{H^{-1}})$ with 
\begin{align*}
 \| \gamma \|_{W^{2,\infty}} + \| \eta \|_{W^{1,\infty}}  + \| \psi_1^\ast \|_{L^\infty}\leq C(\| \gamma \|_{H^2}, \| \eta \|_{H^1}, \| \psi^\ast \|_{H^{-1}})\,.
\end{align*}
\end{proposition}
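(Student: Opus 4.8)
\emph{Plan of proof.}

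The plan is to apply the Lagrange multiplier theorem, Theorem~\ref{Deimlingc9s26t1}, and then to treat the two Euler--Lagrange equations separately: the variation in $\gamma$ reduces to Proposition~\ref{DualityArgumentLp} up to a harmless lower order term, while the variation in $\eta$ carries the multiplier of the pointwise constraint (S) and is where the real difficulty lies. By Lemma~\ref{lemma:AdmissibleConstraint} the constraints (A) and (S) are admissible, and since $A$ depends only on $\gamma$ and $S$ only on $\eta$ their joint Fr\'echet derivative is onto $\R\times H^1_{per}([0,2\pi])$; hence the multiplier of $E_{LM}$ may be normalised to $1$ and Theorem~\ref{Deimlingc9s26t1} produces a scalar $\mu\in\R$ for (A) and an element $\psi^\ast\in H^{-1}_{per}([0,2\pi])$ for (S). Using $A'(\gamma)[\varphi]=-\int_\gamma\scp{\nu}{\varphi}\dv{s}$ and $S'(\eta)[\psi]=2\scp{\eta}{\psi}$, the stationarity conditions become, for all $\varphi\in H^2_{per}$ and $\psi\in H^1_{per}$,
\begin{align*}
 \int_{\gamma}(\kappa+\delta\diverg_\gamma\eta)\scp{\partial_{ss}\varphi}{\nu}\dv{s} &= F(\gamma,\eta,\varphi)-\mu\int_\gamma\scp{\nu}{\varphi}\dv{s},\\
 \lambda\int_\gamma\scp{\partial_s\eta}{\partial_s\psi}\dv{s}+\delta\int_\gamma(\kappa+\delta\diverg_\gamma\eta)\diverg_\gamma\psi\dv{s} &= -2\scp{\psi^\ast}{\scp{\eta}{\psi}},
\end{align*}
with $F$ the right hand side of~\eqref{eq:definitionF}.

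For the $\gamma$--equation I would first note that $\mu$ is controlled by the data: testing against $\varphi=\gamma\in H^2_{per}$ one has $A'(\gamma)[\gamma]=2A(\gamma)$, $\scp{\partial_{ss}\gamma}{\nu}=\kappa$ and $\partial_s\gamma=\tau$, so every remaining term is an $L^1$--pairing of quantities bounded in $L^2$ by $\|\gamma\|_{H^2}$ and $\|\eta\|_{H^1}$, whence $|\mu|\leq C(\|\gamma\|_{H^2},\|\eta\|_{H^1})$ (a minor variant of the test field covers the degenerate case $A(\gamma)=0$). With $\mu$ bounded, the area term $-\mu\int_\gamma\scp{\nu}{\varphi}\dv{s}$ is of lower order: for the test functions built in Proposition~\ref{DualityArgumentLp} one has $\|\varphi\|_{L^\infty}\leq C\|\varphi\|_{W^{2,1}}\leq C\|g\|_{L^1}$, so this contribution is $\leq C|\mu|\,\|g\|_{L^1}$ and is absorbed into the estimate~\eqref{EstimateLp}. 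The duality argument of Proposition~\ref{DualityArgumentLp} via Lemma~\ref{LemmaAltDuality} then applies verbatim and yields $W:=\kappa+\delta\diverg_\gamma\eta\in L^\infty$ with a bound in terms of $\|\gamma\|_{H^2},\|\eta\|_{H^1}$.

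The $\eta$--equation is the heart of the matter. Setting $V:=\lambda\partial_s\eta+\delta W\tau\in L^2$, the second identity above is exactly $\partial_s V=2\psi^\ast\eta$ in $\mathcal{D}'$, the multiplier entering only through the vector $\psi^\ast\eta\in H^{-1}$; the naive duality test cannot absorb this $H^{-1}$ term into an $\|\sigma\|_{L^1}$ estimate. Instead I would exploit the constraint geometry. Differentiating $|\eta|^2=1$ gives $\scp{\eta}{\partial_s\eta}=0$, so $\partial_s\eta=\omega J\eta$ with $\omega:=\scp{\partial_s\eta}{J\eta}\in L^2$ and $|\partial_s\eta|=|\omega|$. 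Pairing the identity with $J\eta$ annihilates the multiplier because $\scp{\eta}{J\eta}=0$, producing the distributional ODE
\begin{align*}
 \partial_s\scp{V}{J\eta}=\scp{V}{J\partial_s\eta}=\delta W\scp{\tau}{J\partial_s\eta}\in L^1\,,
\end{align*}
where $\scp{\partial_s\eta}{J\partial_s\eta}=0$ was used. Since $\scp{V}{J\eta}=\lambda\omega+\delta W\scp{\tau}{J\eta}$ has an $L^1$ derivative it lies in $W^{1,1}\hookrightarrow L^\infty$; subtracting the bounded term $\delta W\scp{\tau}{J\eta}$ and dividing by $\lambda>0$ gives $\omega\in L^\infty$, that is $\partial_s\eta\in L^\infty$ and $\eta\in W^{1,\infty}_{per}$. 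Consequently $\diverg_\gamma\eta\in L^\infty$, hence $\kappa=W-\delta\diverg_\gamma\eta\in L^\infty$ and $\gamma\in W^{2,\infty}_{per}$ by Remark~\ref{RegularityGamma}.

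To recover the multiplier I would pair $\partial_s V=2\psi^\ast\eta$ with $\eta$: using $|\eta|^2=1$ this reads $2\psi^\ast=\scp{\partial_s V}{\eta}=\partial_s\scp{V}{\eta}-\scp{V}{\partial_s\eta}$ in $\mathcal{D}'$, where $\scp{V}{\eta}=\delta W\scp{\tau}{\eta}\in L^\infty$ and, now that $\partial_s\eta\in L^\infty$, $\scp{V}{\partial_s\eta}=\lambda|\partial_s\eta|^2+\delta W\diverg_\gamma\eta\in L^\infty$. Thus the representation $\psi^\ast=\psi_0^\ast+\partial_s\psi_1^\ast$ with $\psi_1^\ast=\tfrac{\delta}{2}W\scp{\tau}{\eta}$ and $\psi_0^\ast=-\tfrac12\scp{V}{\partial_s\eta}$ realises $\psi^\ast$ and satisfies $\psi_1^\ast\in L^\infty$, $\psi_0^\ast\in L^\infty\subset L^2$, with all norms bounded by $C(\|\gamma\|_{H^2},\|\eta\|_{H^1})$. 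The main obstacle is precisely this $H^{-1}$ multiplier: the resolution is the structural observation that (S) confines $\partial_s\eta$ to the line $\R J\eta$, so that projecting the Euler--Lagrange identity onto $J\eta$ decouples it from $\psi^\ast$ and reduces matters to integrating an $L^1$ right hand side, while the projection onto $\eta$ then returns the multiplier together with its $L^\infty$ potential.
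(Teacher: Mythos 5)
Your proof is correct, and your treatment of the $\eta$--equation is genuinely different from the paper's. For the $\gamma$--equation you do what the paper does: the area multiplier contributes only the lower order term $-a\int_\gamma\scp{\nu}{\varphi}\dv{s}$, and the duality argument of Proposition~\ref{DualityArgumentLp} carries over to give $\kappa+\delta\diverg_\gamma\eta\in L^\infty$; your explicit bound on $a$ via $\varphi=\gamma$ is a detail the paper leaves implicit (and for $A(\gamma)=0$ one should note that the natural substitute $\varphi=\nu$ is only $H^1$, so a smoothed test field is needed -- but no separate bound on $a$ is actually required for the conclusion). For the $\eta$--equation the paper starts from an arbitrary representation $\psi^\ast=\psi_0^\ast+\partial_s\psi_1^\ast$ with $\psi_0^\ast,\psi_1^\ast\in L^2$, absorbs the derivative part of the multiplier into the flux $h=\lambda\partial_s\eta-2\psi_1^\ast\eta$, proves $h\in L^\infty$ by the same antiderivative/duality test as in the unconstrained case, and only then projects onto $\eta$ to read off $\psi_1^\ast=-\tfrac12\scp{h}{\eta}\in L^\infty$ and hence $\partial_s\eta\in L^\infty$. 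You instead test with $\psi=fJ\eta$, which annihilates the multiplier outright because $S'(\eta)[fJ\eta]=2f\scp{\eta}{J\eta}=0$, and integrate the resulting scalar identity $\partial_s\scp{V}{J\eta}=\delta(\kappa+\delta\diverg_\gamma\eta)\scp{\tau}{J\partial_s\eta}\in L^1$; combined with $\partial_s\eta=\omega J\eta$, which follows from the constraint, this gives $\partial_s\eta\in L^\infty$ without ever estimating the multiplier. Your subsequent projection onto $\eta$ then produces the explicit representation $\psi_1^\ast=\tfrac{\delta}{2}(\kappa+\delta\diverg_\gamma\eta)\scp{\tau}{\eta}$, $\psi_0^\ast=-\tfrac12\scp{V}{\partial_s\eta}$, which even yields $\psi_0^\ast\in L^\infty$ already at this stage, something the paper obtains only in Proposition~\ref{DualityArgumentConstraintW1p}. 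The two routes are equivalent in outcome, since any two $L^2\times L^2$ representations of $\psi^\ast$ differ by an $H^1\hookrightarrow L^\infty$ shift, so exhibiting one good representation proves the stated property for all of them; your version buys an explicit formula for the multiplier and a cleaner decoupling of the equation from $\psi^\ast$, at the cost of leaning entirely on the pointwise orthonormal frame $\{\eta,J\eta\}$, which is special to the constraint $|\eta|^2=1$ for planar vector fields, whereas the paper's absorption trick is more robust.
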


\begin{proof}
Since $H_{imm}^2$ is an open subset in $H_{per}^2$ there exists an $r>0$ such that $B(\gamma,r)\subset H_{imm}^2\subset H_{per}^2$. Therefore we may consider $(\gamma, \eta)$ as a minimizer of
\begin{align*}
 E_{LM}\colon B_r= B((\gamma, \eta),r) \subset X \to \R\,,\quad X= H_{per}^2([0,2\pi];\R^2) \times H_{per}^1([0,2\pi];\R^2)
\end{align*}
subject to the given constraints. According to~\cite[Proposition~1.2]{AmbrosettiMalchiodi2007}, the map $E_{LM}$ is Fr\'echet differentiable in $B_r$ if the partial Fr\'echet derivatives with respect to $\gamma$ and $\eta$ are continuous in $B_r$. By~\eqref{variationELMgamma} and~\eqref{variationELMeta}, the partial Gateaux derivatives in the directions $\varphi\in H^2_{per}$ and $\psi\in H_{per}^1$ are given for $(\widetilde{\gamma},\widetilde{\eta})\in B_r$ by 
\begin{align*}
\partial_\gamma E_{LM}((\widetilde{\gamma},\widetilde{\eta}),\varphi)& = \int_{\widetilde{\gamma}}  (\widetilde{\kappa} + \delta \diverg_{\widetilde{\gamma}}\widetilde{\eta}) \bsqb{ \scp{\partial_{\widetilde{s}\widetilde{s}}\varphi}{\widetilde{\nu}} + \delta \scp{ \partial_{\widetilde{s}}\widetilde{\eta}}{ \partial_{\widetilde{s}} \varphi}  } \dv{\widetilde{s}} \\& + \int_{\widetilde{\gamma}} \Bigl(-\frac{3}{2} (\widetilde{\kappa}  + \delta\diverg_{\widetilde{\gamma}} \widetilde{\eta})^2 -\frac{\lambda}{2}\vert\partial_{\widetilde{s}}\widetilde{\eta}\vert^2+1 \Bigr)\left\langle\widetilde{\tau},\partial_{\widetilde{s}}\varphi\right\rangle \dv{\widetilde{s}} \,, \\ 
\partial_{\eta} E_{LM}((\widetilde{\gamma},\widetilde{\eta}),\psi) &= \int_{\widetilde{\gamma}} \delta (\widetilde{\kappa} +\delta \diverg_{\widetilde{\gamma}}  \widetilde{\eta}) \diverg_{\widetilde{\gamma}}\psi \dv{\widetilde{s}}  + \lambda \int_{\widetilde{\gamma}} \left\langle\partial_{\widetilde{s}} \widetilde{\eta}, \partial_{\widetilde{s}}\psi\right\rangle \dv{\widetilde{s}}\,.
\end{align*}
The Gateaux derivatives define bounded and linear functionals. 

Therefore the partial Fr\'echet derivatives exist, and by H\"older's inequality one sees that the Fr\'echet derivatives are continuous on $B((\gamma, \eta),r)\subset X$. Finally, by Lemma~\ref{lemma:AdmissibleConstraint} the constraints (A) and (S) are admissible constraints and we may use Theorem~\ref{Deimlingc9s26t1} with $\Phi = E_{LM}$, $B_r\subset X$ as constructed and the constraint
\begin{align*}
 F\colon X \to \R \times H_{per}^1([0,2\pi]) = Y\,,\quad (\varphi, \psi)\mapsto (A(\varphi), |\psi|^2-1)\,.
\end{align*}
Thus there exists a Lagrange multiplier $y^\ast \in Y^\ast$ such that $E_{LM}'(\gamma,\eta) + (F'(\gamma, \eta))^\ast y^\ast = 0$ in $X^\ast$. Since $Y^\ast = \R \times H_{per}^{-1}$, there exist $a\in \R$ and $\psi^\ast \in H_{per}^{-1}$ such that for all $(\varphi,\psi)\in X$ the identity 
\begin{align*}
&\langle E_{LM}'(\gamma,\eta),(\varphi, \psi) \rangle +\langle y^\ast,  F'(\gamma, \eta))(\varphi, \psi)\rangle\\
&=
 \partial_\gamma E_{LM}(\gamma,\eta)[\varphi] + \partial_\eta E_{LM}(\gamma,\eta)[\psi]
 + a  A'(\gamma)[\varphi] + \scp{\psi^\ast}{S'(\eta)[\psi]}=0
\end{align*}
holds. If one chooses $\psi=0$, then one finds for all $\varphi\in H_{per}^2$ that the equation that corresponds to~\eqref{eq:definitionF} has an additional term on the right-hand side, 
\begin{align*}
 \int_{\gamma} ( \kappa+\delta\diverg_\gamma \eta)\langle\partial_{ss}\varphi, \nu \rangle\dv{s} =\int_\gamma\Bigl(\frac{3}{2}\left(\kappa+\delta\diverg_\gamma \eta\right)^2 +\frac{\lambda}{2}\vert\partial_s\eta\vert^2-1\Bigr) \left\langle\tau,\partial_s\varphi\right\rangle\\ -\delta\left(\kappa+\delta\diverg_\gamma \eta\right)\left\langle\partial_s\eta,\partial_s\varphi\right\rangle -  a \scp{\nu}{\varphi}\dv{s}\,.
\end{align*}
The additional term is of lower order compared to the other terms on the right-hand side since $\nu=J\tau$ has the same regularity as $\tau$ and the arguments in the proof of Proposition~\ref{DualityArgumentLp} imply that $\kappa + \delta \diverg_\gamma\eta\in L^\infty$. The choice of $\varphi=0$ leads with~\eqref{eq:TestEq2} and $\psi^\ast = \psi_0^\ast + \partial_s\psi_1^\ast$, $\psi_0^\ast$, $\psi_1^\ast\in L^2([0,2\pi])$, to 
\begin{align}\label{BasicLagrangeMultiplierEta}
\begin{aligned}
\lambda \int_{\gamma} \left\langle\partial_s \eta, \partial_s\psi\right\rangle \dv{s} & =
- \int_{\gamma} \delta (\kappa +\delta \diverg_\gamma\eta) \diverg_\gamma\psi \dv{s} -  \scp{\psi^\ast}{S'(\eta)[\psi]}\\ & =
- \int_{\gamma} \delta (\kappa +\delta \diverg_\gamma\eta) \diverg_\gamma\psi \dv{s} -2 \int_{\gamma} \psi_0^\ast \scp{\eta}{\psi} - \psi_1^\ast \partial_s \scp{\eta}{\psi} \mathrm{d}s \,.
\end{aligned}
\end{align}
We expand the derivative and rearrange terms 
\begin{align}\label{PsiOneAstLInfty}
\int_{\gamma} \left\langle\lambda \partial_s \eta -2 \psi_1^\ast \eta, \partial_s\psi\right\rangle \dv{s} 
& =
- \int_{\gamma} \delta (\kappa +\delta \diverg_\gamma\eta) \diverg_\gamma\psi \dv{s} -2 \int_{\gamma} \psi_0^\ast \scp{\eta}{\psi} - \psi_1^\ast \scp{\partial_s\eta}{\psi} \mathrm{d}s\,,
\end{align}
and use $\psi$ as an anti-derivative of $\sigma$ as in~\eqref{eq:Defpsi} as a test function. Since $\| \psi\|_{L^\infty}\leq C \| \psi \|_{W^{1,1}}\leq C(\| \gamma\|_{H^2}) \| \sigma \|_{L^1}$, we obtain for all $\sigma\in C_{per}^\infty([0,2\pi];\R^2)$
\begin{align*}
 \int_{\gamma} \left\langle\lambda \partial_s \eta -2 \psi_1^\ast \eta, \sigma\right\rangle \dv{s} \leq  \int_{\gamma} \left\langle\lambda \partial_s \eta -2 \psi_1^\ast \eta, A\right\rangle \dv{s} +   C(\gamma, \eta, \psi^\ast) \| \sigma \|_{L^1}\\ \leq   C(\| \gamma \|_{H^2}, \| \eta \|_{H^1}, \| \psi^\ast \|_{H^{-1}}) \| \sigma \|_{L^1}
\end{align*}
and by duality $h=\lambda \partial_s \eta -2 \psi_1^\ast \eta$ defines an element in $L^\infty$ and the norm is bounded by the constant on the right-hand side. By assumption, $\eta$ satisfies the constraint $|\eta|^2=1$ and $\langle \partial_s \eta, \eta\rangle = 0$. Consequently $\partial_s \eta\in L^\infty$ and, as in Proposition~\ref{DualityArgumentLp}, $\kappa\in L^\infty$ and $\gamma\in W_{per}^{2,\infty}$. This argument also proves that $\psi_1^\ast = -(1/2)\langle h,\eta\rangle\in L^\infty$ together with an estimate. 
\end{proof}

\begin{proposition}[$W^{1,\infty}$--bounds for $E_{LM}$ with constraints]\label{DualityArgumentConstraintW1p}
Suppose that (H) holds. If a curve $\gamma\in H^2_{imm}([0,2\pi];\R^2)$ 
parametrized proportional to arc length together with a vector field $\eta \in H_{per}^1([0,2\pi];\R^2)$ is a critical point of $E_{LM}$ subject to the constraints (A) and (S), then $\gamma\in W_{per}^{3,\infty}([0,2\pi])$, $\eta\in W_{per}^{2,\infty}([0,2\pi])$, $\psi_0^\ast \in L^\infty$, and $\psi_1^\ast\in W^{1,\infty}$. Moreover, there exists a constant $C=C(\| \gamma \|_{H^2}, \| \eta \|_{H^1})$ with 
\begin{align*}
 \| \gamma \|_{W^{3,\infty}} + \| \eta \|_{W^{2,\infty}} + \| \psi_0 \|_{L^\infty} + \| \psi_1^\ast \|_{W^{1,\infty}}\leq C(\| \gamma \|_{H^2}, \| \eta \|_{H^1}, \| \psi^\ast \|_{H^{-1}}).
\end{align*}
\end{proposition}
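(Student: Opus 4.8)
The plan is to follow the duality scheme of Proposition~\ref{DualityArgumentW1p}, keeping track of the two new contributions produced by the Lagrange multipliers: the constant $a$ coming from the area constraint (A) in the $\gamma$--equation, and the multiplier $\psi^\ast$ coming from (S) in the $\eta$--equation. The bounds already secured in Proposition~\ref{DualityArgumentConstraintLp}, namely $\gamma\in W_{per}^{2,\infty}$, $\eta\in W_{per}^{1,\infty}$, $\psi_1^\ast\in L^\infty$ and $w:=\kappa+\delta\diverg_\gamma\eta\in L^\infty$, serve as input. I would first upgrade $w$ to $W^{1,\infty}$, then prove that $\psi^\ast$ is in fact an $L^\infty$ function, and only afterwards bootstrap $\eta$ to $W^{2,\infty}$ and close the loop for $\kappa$ and $\gamma$.

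For the curve I would insert into the $\gamma$--variation exactly the test function used in Proposition~\ref{DualityArgumentW1p}, i.e.\ $\varphi\in H_{per}^2$ with $\partial_s\varphi = g\nu - A$ and $\scp{\partial_{ss}\varphi}{\nu} = \partial_s g$, which satisfies $\|\varphi\|_{W^{1,1}}\leq C(\|\gamma\|_{H^2})\|g\|_{L^1}$. The only new contribution is the term $-a\int_\gamma\scp{\nu}{\varphi}\dv{s}$, bounded by $|a|\,\|\nu\|_{L^\infty}\|\varphi\|_{L^1}\leq C\|g\|_{L^1}$ because $\nu=J\tau$ shares the regularity of $\tau$; it is therefore of lower order. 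Together with the $L^\infty$--bounds of Proposition~\ref{DualityArgumentConstraintLp} this gives $\int_\gamma w\,\partial_s g\dv{s}\leq C\|g\|_{L^1}$, and combining it with the zeroth--order estimate~\eqref{EstimateLp}, Lemma~\ref{LemmaAltDuality} yields $w\in W_{per}^{1,\infty}$.

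The main obstacle is that the component $\psi_0^\ast$ is a priori only in $L^2$, so the term $-2\int_\gamma\psi_0^\ast\scp{\eta}{\psi}\dv{s}$ appearing in~\eqref{BasicLagrangeMultiplierEta} cannot be controlled by $\|\sigma\|_{L^1}$ and blocks a direct duality bootstrap for $\eta$. I would remove this obstruction by exploiting the constraint through the admissible test field $\psi=\phi\,\eta$ with $\phi\in C_{per}^\infty([0,2\pi])$. Since $|\eta|^2=1$ forces $\scp{\partial_s\eta}{\eta}=0$, one has $\scp{\eta}{\psi}=\phi$, $\scp{\partial_s\eta}{\partial_s\psi}=\phi|\partial_s\eta|^2$ and $\diverg_\gamma\psi=(\partial_s\phi)\scp{\eta}{\tau}+\phi\diverg_\gamma\eta$, so the $\eta$--equation collapses, after a single integration by parts, to
\begin{align*}
 2\scp{\psi^\ast}{\phi} = \int_\gamma \Bigl(-\lambda|\partial_s\eta|^2 + \delta\,\partial_s\barg{w\scp{\eta}{\tau}} - \delta\, w\diverg_\gamma\eta\Bigr)\phi\dv{s}\,.
\end{align*}
The integration by parts is legitimate because $w\scp{\eta}{\tau}\in W^{1,\infty}$, using $w\in W^{1,\infty}$ from the previous step together with $\eta,\tau\in W^{1,\infty}$ (the latter from $\gamma\in W_{per}^{2,\infty}$). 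The bracket is thus an $L^\infty$ function, so $\psi^\ast\in L^\infty$; since $\psi^\ast\in L^\infty\subset L^2$ we may realise the decomposition $\psi^\ast=\psi_0^\ast+\partial_s\psi_1^\ast$ with $\psi_0^\ast=\psi^\ast\in L^\infty$ and $\psi_1^\ast=0\in W^{1,\infty}$, and the displayed identity bounds $\|\psi^\ast\|_{L^\infty}$ by $C(\|\gamma\|_{H^2},\|\eta\|_{H^1},\|\psi^\ast\|_{H^{-1}})$.

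With $\psi^\ast\in L^\infty$ secured, the bootstrap for $\eta$ runs exactly as in Proposition~\ref{DualityArgumentW1p}. Testing the $\eta$--equation with $\psi=\sigma\in C_{per}^\infty([0,2\pi];\R^2)$ and integrating by parts in $\int_\gamma w\diverg_\gamma\sigma\dv{s}$ gives the curvature--corrected identity
\begin{align*}
 \lambda\int_\gamma\scp{\partial_s\eta}{\partial_s\sigma}\dv{s} = \int_\gamma\delta\scp{(\partial_s w)\tau + w\kappa\nu}{\sigma}\dv{s} - 2\int_\gamma\psi^\ast\scp{\eta}{\sigma}\dv{s}\leq C\|\sigma\|_{L^1}\,,
\end{align*}
in which every factor paired with $\sigma$ lies in $L^\infty$. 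Combined with the zeroth--order bound~\eqref{EstimateEtaLp}, whose multiplier contribution is now likewise controlled by $\psi^\ast\in L^\infty$, Lemma~\ref{LemmaAltDuality} (with $m=1$) yields $\partial_s\eta\in W^{1,\infty}$, that is $\eta\in W_{per}^{2,\infty}$. Then $\diverg_\gamma\eta=\scp{\partial_s\eta}{\tau}\in W^{1,\infty}$, hence $\kappa=w-\delta\diverg_\gamma\eta\in W^{1,\infty}$ and $\gamma\in W_{per}^{3,\infty}$ by Remark~\ref{RegularityGamma}. The quantitative estimate is assembled from the constants of the individual duality steps, producing the asserted dependence on $\|\gamma\|_{H^2}$, $\|\eta\|_{H^1}$ and $\|\psi^\ast\|_{H^{-1}}$.
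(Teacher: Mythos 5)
Your argument is correct and recovers the full regularity of $\gamma$ and $\eta$, but it handles the Lagrange multiplier by a genuinely different route than the paper. The first step ($\kappa+\delta\diverg_\gamma\eta\in W^{1,\infty}$ via the test function with $\partial_s\varphi=g\nu-A$, the area term $-a\int_\gamma\scp{\nu}{\varphi}\dv{s}$ being lower order) coincides with the paper's. For the multiplier, the paper works with the auxiliary function $h=\lambda\partial_s\eta-2\psi_1^\ast\eta$: it first upgrades $h$ to $W^{1,\infty}$ by testing~\eqref{PsiOneAstLInfty} with $\sigma$, extracts $\psi_1^\ast=-\tfrac12\scp{h}{\eta}\in W^{1,\infty}$ from the constraint $|\eta|^2=1$, deduces $\lambda\partial_s\eta=h+2\psi_1^\ast\eta\in W^{1,\infty}$, and only at the very end tests with $\psi=g\eta$ to identify $\psi_0^\ast\in L^\infty$. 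You instead deploy the test field $\phi\eta$ immediately and show that the \emph{whole} distribution $\psi^\ast$ acts boundedly on $L^1$, hence is an $L^\infty$ function; this decouples the multiplier from the $\eta$--bootstrap, which then runs exactly as in the unconstrained Proposition~\ref{DualityArgumentW1p}. Your version is arguably cleaner and yields the stronger intermediate statement $\psi^\ast\in L^\infty$. The one caveat is bookkeeping: the proposition's $\psi_0^\ast$, $\psi_1^\ast$ refer to the representation fixed in Proposition~\ref{DualityArgumentConstraintLp}, and since the decomposition $\psi^\ast=\psi_0^\ast+\partial_s\psi_1^\ast$ is not unique, your choice $\psi_0^\ast=\psi^\ast$, $\psi_1^\ast=0$ proves the existence of a representation with the asserted regularity rather than the regularity of the previously fixed components (for those, your bound only gives $\partial_s\psi_1^\ast=\psi^\ast-\psi_0^\ast\in L^2$ unless you also run the paper's $h$--argument). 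As the paper itself only ever asserts existence of a suitable representation, this is a presentational point, not a gap, but it should be stated explicitly if you keep your route.
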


\begin{proof}
The regularity $\partial_s\eta\in L^\infty$ shown in Proposition~\ref{DualityArgumentConstraintLp} implies as in the proof of Proposition~\ref{DualityArgumentW1p} that $\kappa + \delta\diverg_\gamma \eta\in W^{1,\infty}$. With this information, one infers from~\eqref{PsiOneAstLInfty} with $\psi = \sigma\in C_{per}^\infty([0,2\pi];\R^2)$ after an integration by parts in the first term on the right-hand side that $\partial_s(\lambda \partial_s \eta -2 \psi_1^\ast \eta)\in L^\infty$, that is, $h\in W_{per}^{1,\infty}$. Thus $\psi_1^\ast = -(1/2) \langle h,\eta\rangle\in W_{per}^{1,\infty}$ and $\partial_s \psi_1^\ast\in L^\infty$. Consequently, $\lambda \partial_s \eta = h + 2 \psi_1^\ast \in W_{per}^{1,\infty}$ and we may rewrite~\eqref{BasicLagrangeMultiplierEta} as 
\begin{align*}
2\int_{\gamma} \psi_0^\ast \scp{\eta}{\psi}\dv{s} = \int_{\gamma}\lambda  \left\langle\partial_{ss} \eta, \psi\right\rangle- \delta\langle\nabla_\gamma (\kappa +\delta \diverg_\gamma\eta), \psi\rangle 
+\delta\left(\kappa +\delta \diverg_\gamma\eta\right)\kappa\left\langle\psi,\nu\right\rangle
 -2  \partial_s \psi_1^\ast \scp{\eta}{\psi} \mathrm{d}s \,.
\end{align*}
The special choice $\psi = g \eta$ with $g\in C_{per}^\infty([0,2\pi])$ implies that
\begin{align*}
 \int_{\gamma} \psi_0^\ast g \dv{s}\leq C(\| \gamma \|_{H^2}, \| \eta \|_{H^1}, \| \psi^\ast \|_{H^{-1}}) \| g \|_{L^1}
\end{align*}
and this estimate implies $\psi_0^\ast\in L^\infty$ together with the estimate.
\end{proof}

\begin{theorem}[Regularity for critical points of $E_{LM}$ and $E$ with constraints]\label{RegularityELM}
 Fix $\delta$, $\lambda$, $L_0$, $A_0\in \R$ with $\lambda>0$, $L_0>0$  and assume that $\gamma\in H^2_{imm}([0,2\pi];\R^2)$ is parametrized proportional to arc length and that $\eta \in H_{per}^1([0,2\pi];\R^2)$. If $(\gamma,\eta)$ is 
 \begin{itemize}
 \item[(i)] a critical point of $E_{LM}$ subject to the constraints (A) and (S) or
 
 \item[(ii)] a critical point of $E$ subject to the constraints (L), (A) and (S) with $\kappa$ not constant, $A_0\in [-L_0^2/4\pi,L_0^2/4\pi]$,
 \end{itemize}
then $\gamma$, $\eta\in C_{per}^{\infty}([0,2\pi];\R^2)$, $\psi_0^\ast$, $\psi_1^\ast\in C_{per}^{\infty}([0,2\pi])$. There exists a constant $$C_k=C_k(\| \gamma \|_{H^2}, \| \eta \|_{H^1}, \| \psi \|_{H^{-1}})\quad k\in \N\,,$$  with 
\begin{align*}
 \| \gamma \|_{W^{k+2,\infty}} + \| \eta \|_{W^{k+1,\infty}}  + \| \psi_0^\ast \|_{W^{k,\infty}}+ \| \psi_1^\ast \|_{W^{k,\infty}}\leq C_k(\| \gamma \|_{H^2}, \| \eta \|_{H^1}, \| \psi \|_{H^{-1}})\,.
\end{align*}
\end{theorem}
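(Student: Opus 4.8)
The plan is to run an induction on $k\in\mathbb{N}$ that parallels the proof of Theorem~\ref{regularity-stationary-points}, but now carries along the Lagrange multiplier $\psi^\ast=\psi_0^\ast+\partial_s\psi_1^\ast$ associated to the constraint (S) together with the scalar multipliers produced by (A) and, in case (ii), by (L). First I would record that these multipliers exist: in case (i) the constraints (A) and (S) are admissible by Lemma~\ref{lemma:AdmissibleConstraint}, while in case (ii) the hypothesis that $\kappa$ is not constant guarantees that $G=(L,A)$ together with (S) is admissible, again by Lemma~\ref{lemma:AdmissibleConstraint}; Theorem~\ref{Deimlingc9s26t1} then furnishes the multipliers and the corresponding stationarity identities. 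The base case $k=1$ is precisely the content of Propositions~\ref{DualityArgumentConstraintLp} and~\ref{DualityArgumentConstraintW1p}, which give $\gamma\in W_{per}^{3,\infty}$, $\eta\in W_{per}^{2,\infty}$, $\psi_0^\ast\in L^\infty$, $\psi_1^\ast\in W^{1,\infty}$ with the stated estimates.

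For the inductive step I would assume the asserted regularity at level $k-1$ and first improve $\kappa+\delta\diverg_\gamma\eta$. The Euler--Lagrange identity for the constrained problem differs from~\eqref{eq:definitionF} only by the additional term $-a\scp{\nu}{\varphi}$ coming from (A), together with an analogous term proportional to $\kappa\scp{\nu}{\varphi}$ from (L) in case (ii). Choosing the test function $\varphi=\partial_s^{k-2}g\,\nu$ with $g\in C_{per}^\infty([0,2\pi])$, exactly as in Theorem~\ref{regularity-stationary-points}, and integrating by parts using the level-$(k-1)$ regularity of $\kappa$, $\nu$, $\eta$ and $\kappa+\delta\diverg_\gamma\eta$, every term is bounded by $C\|g\|_{L^1}$; the constraint contributions are of strictly lower order because $\nu=J\tau$ inherits the regularity of $\partial_s\gamma$. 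Lemma~\ref{LemmaAltDuality} then upgrades $\kappa+\delta\diverg_\gamma\eta$ to $W^{k,\infty}$ with the corresponding estimate.

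The essential step is the simultaneous bootstrap of $\eta$ and the multiplier $\psi_1^\ast$, for which the decisive object is the combination $h=\lambda\partial_s\eta-2\psi_1^\ast\eta$ appearing in~\eqref{PsiOneAstLInfty}. Inserting the test function $\partial_s^{k-1}\sigma$ into the rearranged identity~\eqref{PsiOneAstLInfty}, written as in~\eqref{BasicLagrangeMultiplierEta} with $\psi^\ast=\psi_0^\ast+\partial_s\psi_1^\ast$, and integrating by parts once on the right-hand side (now permissible thanks to $\kappa+\delta\diverg_\gamma\eta\in W^{k,\infty}$), one obtains $\int_\gamma\scp{h}{\partial_s^{k}\sigma}\dv{s}\le C\|\sigma\|_{L^1}$, so by duality $h\in W^{k,\infty}$. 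The constraint (S) in the differentiated form $\scp{\partial_s\eta}{\eta}=0$ then decouples the system: testing against $\eta$ gives $\psi_1^\ast=-\tfrac12\scp{h}{\eta}$, whence $\psi_1^\ast\in W^{k,\infty}$, and consequently $\lambda\partial_s\eta=h+2\psi_1^\ast\eta\in W^{k,\infty}$, that is $\eta\in W^{k+1,\infty}$. The component $\psi_0^\ast$ is recovered exactly as in Proposition~\ref{DualityArgumentConstraintW1p}: with $\eta\in W^{k+1,\infty}$ at hand, the special choice $\psi=g\,\eta$ in~\eqref{BasicLagrangeMultiplierEta} yields $\int_\gamma\psi_0^\ast g\dv{s}\le C\|g\|_{L^1}$ and hence $\psi_0^\ast\in W^{k,\infty}$.

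I expect the main obstacle to be this last step: the relation $\psi_1^\ast=-\tfrac12\scp{h}{\eta}$ and the identity $\lambda\partial_s\eta=h+2\psi_1^\ast\eta$ couple $\eta$ and $\psi_1^\ast$ at the same differential order, so one must verify that a single gain of regularity in $h$ raises the regularity of both simultaneously, and that each integration by parts on the right-hand side of~\eqref{PsiOneAstLInfty} is legitimated by the regularity already secured at the current level. Once $\kappa\in W^{k,\infty}$ is available for every $k$, Remark~\ref{RegularityGamma} promotes $\gamma$ to $C_{per}^\infty$, and the estimates just derived give $\eta,\psi_0^\ast,\psi_1^\ast\in C_{per}^\infty$ as well. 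Case (ii) requires no new idea: the length multiplier contributes one further lower-order term of the same structure as the area term, and the admissibility furnished by the non-constancy of $\kappa$ is exactly what is needed to run the identical argument.
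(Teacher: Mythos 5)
Your proposal is correct and takes essentially the same route as the paper's own proof: induction anchored in Propositions~\ref{DualityArgumentConstraintLp} and~\ref{DualityArgumentConstraintW1p}, the test functions $\partial_s^{k-2}g\,\nu$ and $\partial_s^{k-1}\sigma$ combined with duality via Lemma~\ref{LemmaAltDuality}, the auxiliary quantity $h=\lambda\partial_s\eta-2\psi_1^\ast\eta$ with $\psi_1^\ast=-\tfrac{1}{2}\langle h,\eta\rangle$ decoupling $\eta$ from the multiplier, and, in case (ii), the observation that the length multiplier contributes a term of the same structure as one already present in the variation of $E_{LM}$. The only cosmetic discrepancy is the regularity index for $\psi_0^\ast$ (to place it in $W^{k,\infty}$ rather than merely $L^\infty$ one must test with $\psi=\partial_s^{j}g\,\eta$ for $j\le k$, or read off $\psi_0^\ast$ pointwise from the identity, not just use the single bound $\int_\gamma\psi_0^\ast g\,\mathrm{d}s\le C\|g\|_{L^1}$), which does not affect the $C^\infty$ conclusion.
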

 
\begin{proof}
(i) The proof follows by induction as in Theorem~\ref{regularity-stationary-points} and we sketch the key estimate which states that for all $k\in \N$, $k\geq 1$, $\kappa\in W_{per}^{k,\infty}$, $\gamma\in W_{per}^{k+2,\infty}$, $\eta\in W_{per}^{k+1,\infty}$, $\psi_0^\ast\in W_{per}^{k-1,\infty}$, $\psi_1^\ast\in W_{per}^{k,\infty}$ together with the corresponding estimates. The case $k=1$ is stated in Proposition~\ref{DualityArgumentConstraintW1p}. Suppose thus that the assertion holds for $k-1\geq 1$. We use $\varphi = \partial_s^{k-2}g$ and $\psi = \partial_s^{k-1}\sigma$ with $g\in C_{per}^{\infty}([0,2\pi])$ and $\sigma\in C_{per}^{\infty}([0,2\pi];\R^2)$ as test functions as in Proposition~\ref{DualityArgumentConstraintW1p}. This choice implies $\kappa + \delta \diverg_\gamma \eta\in W_{per}^{k,\infty}$ and $h=\lambda \partial_s \eta -2 \psi_1^\ast \eta\in W_{per}^{k,\infty}$ together with the corresponding estimates. Consequently $\eta\in W_{per}^{k+1}$, $\kappa \in W_{per}^{k,\infty}$ and $\gamma\in W_{per}^{k+2,\infty}$. Then $\psi_1^{\ast}\in W_{per}^{k,\infty}$ and finally $\psi_0^\ast\in W_{per}^{k-1,\infty}$.

(ii) The proof proceeds analogously starting with the variations of $E$ in~\eqref{variationEgamma} and~\eqref{variationEeta}. By Lemma~\ref{lemma:AdmissibleConstraint}, the constraints (L), (A) and (S) are admissible constraints and also the geometric constraint (G) which combines (L) and (A) is admissible if $\gamma$ does not have constant curvature. Thus we may use Theorem~\ref{Deimlingc9s26t1} with $\Phi = E$, $B_r\subset X$ as in Proposition~\ref{RegularityELM} and the constraint
\begin{align*}
 F\colon X \to \R \times \R \times H_{per}^1([0,2\pi]) = Y\,,\quad (\varphi, \psi)\mapsto (G,S)(\varphi, \psi) =  (L(\varphi), A(\varphi), |\psi|^2-1)\,.
\end{align*}
Thus there exists a Lagrange multiplyer $y^\ast \in Y^\ast$ such that $E'(\gamma,\eta) + (F'(\gamma, \eta))^\ast y^\ast = 0$ in $X^\ast$. The multiplier $y^\ast$ is now given by a triple $(\ell, a, \psi^\ast)$ with $\ell$, $a\in \R$ and $\psi^\ast \in H_{per}^{-1}$. The variation with respect to $\gamma$ contains from the constraint on the length the term $\ell\int_\gamma \kappa \langle \varphi,\nu\rangle\mathrm{d}s = - \ell\int_\gamma \langle\tau,\partial_s \varphi\rangle\mathrm{d}s$. This term is also present in the variation of $E_{LM}$ (with a constant one in front of it) and we conclude as before.
\end{proof}

\section{Conclusions}\label{DynamicalProblem}
In this article, we discussed existence of minimizers for the variational problem involving the energy $E_{LM}$, derived the Euler-Lagrange system, and proved that critical points $(\gamma,\eta)$, and therefore in particular minimizers, are smooth if $\gamma$ is parametrized proportional to arc length. The main motivation for the formulation of $E_{LM}$ is the geometric functional $E_{LM}^g$ for simple plane curves where one can interpret $\eta$ as a vector field along the curve and where the surface gradient and the surface divergence are the usual geometric objects. One of the advantages of the formulation $E_{LM}$ is the fact that the variation of the energy with respect to the curve can be calculated without modeling assumptions concerning the vector field. More precisely, if one interprets $\eta$ as a vector field on the trace $\Gamma$ of a simple closed curve $\gamma$, then a variation of $\gamma$ changes its trace and one needs an extension of the vector field $\eta$ to a neighborhood of $\Gamma$. The usual approach for the derivation of the variation of $E_{LM}^g$ is to consider normal variations $\gamma_\varepsilon = \gamma + \varepsilon \varphi \nu$ of $\gamma$ and to extend $\eta$ as a constant vector field along the trace $\varepsilon \mapsto \gamma_\varepsilon(x)$ for all $x\in [0,2\pi]$. The calculation for normal variations is carried out in the Appendix and leads to the following notion of solution for the negative $L^2$--gradient flow. A family of smooth and regular plane closed curves $\gamma:[0,T]\times [0,2\pi]\to\mathbb{R}^2$ 
and a family of smooth vector fields $\eta:[0,T]\times [0,2\pi]\to\mathbb{R}^2$ is said to be a smooth solution of the $L^2$--gradient flow dynamics of the functional $E_{LM}$ if 
\begin{equation*}
\begin{cases}
\partial_t \gamma^\perp  = \bsqb{ - \partial_{ss}(\kappa + \delta
\diverg_\gamma\eta)
+   \delta \partial_s [(\kappa + \delta \diverg_\gamma\eta)
\scp{\partial_s \eta }{\nu}]
- \frac{1}{2}  (\kappa  + \delta\diverg_\gamma \eta)^2 \kappa  -
\frac{\lambda}{2} |\partial_s \eta|^2 \kappa + \kappa}\nu\,, \\
\partial_t \eta  =
\lambda \partial_{ss} \eta + \delta  \nabla_s (\kappa +\delta
\diverg_\gamma  \eta) +\delta (\kappa + \delta \diverg_\gamma\eta) \kappa
\nu\,,
\end{cases}
\end{equation*}
see Lemma~\ref{Gradientflow}, where $\partial_t \gamma^\perp $ denotes the normal component of the velocity vector $\partial_t \gamma$. During the derivation we also collect some useful evolution equations of geometric quantities. The analysis will be addressed in a forthcoming publication~\cite{BDMP}.

A second generalization concerns the formulation of the energy $E_{LM}^g$ for two-dimensional embedded or immersed manifolds in $\mathbb{R}^3$. Let $\varphi:\Sigma\to\mathbb{R}^3$ be a smooth immersion of a $2$-dimensional orientable closed surface $\Sigma$. The Laradji-Mouritsen model~\cite{LaradjiMouritsen2000} for the energy of a liquid-liquid interface $\Sigma$ with mean curvature $H$, surfactant direction $\eta$, and material constants $\delta$, $\lambda>0$ leads to the energy functional 
\begin{align*}
  E_{LM}(\varphi, \eta) = \frac{1}{2}\int_\Sigma (H + \delta \diverg_\varphi\eta)^2 \dv{\mu}_\varphi + \frac{\lambda}{2} \int_\Sigma |\nabla_\varphi \eta|^2 \dv{\mu}_\varphi \,,
\end{align*}
where $\nabla_\varphi$ and $\diverg_{\varphi}$ are the surface gradient and the surface divergence, and where $\dv{\mu}_\varphi $ is the volume measure on $\Sigma$ induced by $\varphi$. Also for this model the evolution equations can be derived. First analytical results on the evolution can be found in~\cite{BrandThesis} and will be presented in detail in a forthcoming publication~\cite{BDMP}.

%
%
%
%
%


\appendix
\section{Formulas}
In the appendix, we collect formulas that are used in the text and sketch their proofs.

As indicated in the introduction, reparametrization of regular curves in Sobolev classes leaves the Sobolev class invariant. 

In fact, for $\gamma\in H^2([0,2\pi];\mathbb{R}^2)$ the function $s(x) = \int_0^x \vert \partial_x \gamma(y) \vert\ \mathrm{d}y$ satisfies $\partial_x s(x) = \vert \partial_x \gamma(x) \vert$ and $\partial_{xx} s(x) = \langle \partial_x\gamma,\partial_{xx}\gamma\rangle/\vert \partial_x \gamma(x) \vert$, thus $s$ defines an $H^2$ function which, by embedding theorems, is a diffeomorphism onto its range. From the explicit formula $\partial_y s^{-1}(y) = 1/s'(x)$ with $y=s(x)$ one sees that $s^{-1}$ is in fact in $H^2$ and 
$\partial_y (\gamma\circ s^{-1})(y) = (\partial_x\gamma)(s^{-1}(y))\cdot \partial_y(s^{-1})(y)$, $\partial_{xx} (\gamma\circ s^{-1})(y) = (\partial_{yy}\gamma)(s^{-1}(y)) \cdot(\partial_y(s^{-1})(y))^2 + (\partial_x\gamma)(s^{-1}(y))\cdot \partial_{yy}(s^{-1})(y)$. The last expression is in $H^2$ since $H^1\hookrightarrow C^0$ in one spatial dimension.

\begin{lemma}[Variation of $E$]\label{ELEqn}
The variation of the functional $E:H_{imm}^2([0,2\pi];\R^2)\times H_{per}^1([0,2\pi];\R^2)\to \R$ in a point $(\gamma,\eta)\in H_{imm}^2([0,2\pi];\R^2)\times H_{per}^1([0,2\pi];\R^2)$ in the direction $(\varphi, \psi)\in H_{per}^2([0,2\pi];\R^2)\times H_{per}^1([0,2\pi];\R^2)$ is given by
\begin{align}
\frac{\delta E}{\delta \gamma}[\varphi] & = \int_{\gamma}  (\kappa + \delta \diverg_\gamma\eta) 
\bsqb{ \scp{\partial_{ss}\varphi}{\nu} + 
\delta \scp{ \partial_s\eta  }{ \partial_s \varphi}  } \dv{s} \nonumber \\& + \int_{\gamma} \Bigl(-\frac{3}{2} (\kappa  + \delta\diverg_\gamma \eta)^2
-\frac{\lambda}{2}\vert\partial_s\eta\vert^2 \Bigr)\left\langle\tau,\partial_s\varphi\right\rangle \dv{s} \,,\label{variationEgamma} \\
\frac{\delta E}{\delta \eta}[\psi] & = \int_{\gamma} \delta (\kappa +\delta \diverg_\gamma  \eta) \diverg_\gamma\psi \dv{s}  + \lambda \int_{\gamma} \left\langle\partial_s \eta, \partial_s\psi\right\rangle \dv{s} \,.\label{variationEeta} 
\end{align}
\end{lemma}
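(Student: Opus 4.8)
The plan is to compute both variations directly as Gateaux derivatives, handling the variation in $\eta$ and the variation in $\gamma$ separately, since the former is elementary while the latter carries all of the geometric work. For \eqref{variationEeta} I would fix $\gamma$ and differentiate $t\mapsto E(\gamma,\eta+t\psi)$ at $t=0$. Because $\gamma$ is held fixed, the line element $\dv{s}$, the frame $\tau,\nu$, and the linear operators $\diverg_\gamma$ and $\nabla_\gamma$ do not move, so differentiating the two quadratic integrands gives $\int_\gamma(\kappa+\delta\diverg_\gamma\eta)\,\delta\diverg_\gamma\psi\dv{s}+\lambda\int_\gamma\langle\nabla_\gamma\eta,\nabla_\gamma\psi\rangle\dv{s}$. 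The identity $\langle\nabla_\gamma\eta,\nabla_\gamma\psi\rangle=\langle\partial_s\eta,\partial_s\psi\rangle$, which follows from $\nabla_\gamma\eta=\partial_s\eta\otimes\tau$ and $|\tau|=1$, then yields \eqref{variationEeta} at once.

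For \eqref{variationEgamma} I would set $\gamma_t=\gamma+t\varphi$ and build the first variation from the variations of the individual geometric quantities at $t=0$. The basic ingredients I would record first are the standard first-variation formulas
\begin{align*}
\partial_t(\dv{s}) &= \langle\tau,\partial_s\varphi\rangle\dv{s}\,,\quad \partial_t\tau=\langle\partial_s\varphi,\nu\rangle\nu\,,\quad \partial_t\nu=-\langle\partial_s\varphi,\nu\rangle\tau\,,\\
[\partial_t,\partial_s] &= -\langle\tau,\partial_s\varphi\rangle\,\partial_s\,,
\end{align*}
the last of which comes from $\partial_s=|\partial_x\gamma|^{-1}\partial_x$ and $\partial_t|\partial_x\gamma|=\langle\tau,\partial_x\varphi\rangle$. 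From these, together with the Frenet relations $\partial_s\tau=\kappa\nu$ and $\partial_s\nu=-\kappa\tau$, one derives the three formulas that are actually needed,
\begin{align*}
\partial_t\kappa &= \langle\partial_{ss}\varphi,\nu\rangle-2\kappa\langle\tau,\partial_s\varphi\rangle\,,\\
\partial_t\diverg_\gamma\eta &= -\langle\tau,\partial_s\varphi\rangle\diverg_\gamma\eta+\langle\partial_s\varphi,\nu\rangle\langle\partial_s\eta,\nu\rangle\,,\\
\partial_t|\partial_s\eta|^2 &= -2\langle\tau,\partial_s\varphi\rangle|\partial_s\eta|^2\,,
\end{align*}
where the middle one uses that $\eta$, viewed as a function of $x\in[0,2\pi]$, is held fixed, so that $\partial_t\partial_s\eta=[\partial_t,\partial_s]\eta$. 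Writing $K=\kappa+\delta\diverg_\gamma\eta$, I would then differentiate $E=\tfrac12\int_\gamma K^2\dv{s}+\tfrac\lambda2\int_\gamma|\partial_s\eta|^2\dv{s}$ by the product rule, inserting $\partial_tK=\partial_t\kappa+\delta\,\partial_t\diverg_\gamma\eta$ and the variation of the measure, and collect terms according to whether they carry $\langle\partial_{ss}\varphi,\nu\rangle$, $\langle\partial_s\varphi,\nu\rangle$, or $\langle\tau,\partial_s\varphi\rangle$.

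The individual variation formulas above are routine; the one place where care is needed is the reorganisation of the collected terms into the stated form. The factor $\langle\partial_s\varphi,\nu\rangle\langle\partial_s\eta,\nu\rangle$ produced by $\partial_t\diverg_\gamma\eta$, together with a piece $\delta K\diverg_\gamma\eta\,\langle\tau,\partial_s\varphi\rangle$ split off from the tangential coefficient, recombine through the orthonormal decomposition $\langle\partial_s\eta,\partial_s\varphi\rangle=\diverg_\gamma\eta\,\langle\tau,\partial_s\varphi\rangle+\langle\partial_s\eta,\nu\rangle\langle\partial_s\varphi,\nu\rangle$ into the single term $\delta K\langle\partial_s\eta,\partial_s\varphi\rangle$ of \eqref{variationEgamma}. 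After this split, the remaining coefficient of $\langle\tau,\partial_s\varphi\rangle$ must reduce to $-\tfrac32K^2-\tfrac\lambda2|\partial_s\eta|^2$; this works out because the contributions $-2\kappa$ from $\partial_t\kappa$, $-\delta\diverg_\gamma\eta$ from $\partial_t\diverg_\gamma\eta$, and $+\tfrac12$ from the variation of $\dv{s}$ conspire via $2\kappa+\delta\diverg_\gamma\eta=K+\kappa$ and the cancellation $K(\kappa+\delta\diverg_\gamma\eta-K)=0$, while the Frank term contributes $-\lambda|\partial_s\eta|^2+\tfrac\lambda2|\partial_s\eta|^2=-\tfrac\lambda2|\partial_s\eta|^2$. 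I expect this algebraic bookkeeping, rather than any single variation formula, to be the main obstacle, since the tangential terms coming from $\partial_t\kappa$, from the moving measure, and from the Frank energy must cancel exactly; note also that no $\langle\tau,\partial_s\varphi\rangle$ term survives from a length contribution here, consistent with $E$ not penalising length.
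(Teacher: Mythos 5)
Your proposal is correct and follows essentially the same route as the paper: both compute the Gateaux derivatives along $\gamma_\varepsilon=\gamma+\varepsilon\varphi$, $\eta_\varepsilon=\eta+\varepsilon\psi$, record the first variations $\partial_\varepsilon\kappa=\scp{\partial_{ss}\varphi}{\nu}-2\kappa\scp{\tau}{\partial_s\varphi}$, $\partial_\varepsilon\diverg_\gamma\eta$, $\partial_\varepsilon|\partial_s\eta|^2$ and of the measure, and then assemble the $-\tfrac32$ and $-\tfrac\lambda2$ coefficients exactly as you describe. The only (immaterial) difference is that you obtain $\partial_\varepsilon\kappa$ from the commutator $[\partial_t,\partial_s]$ and the evolution of the Frenet frame, whereas the paper differentiates the explicit formula $\kappa=\det(\partial_x\gamma,\partial_{xx}\gamma)/|\partial_x\gamma|^3$; both yield the same expression and the remaining bookkeeping agrees.
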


\begin{proof}
 For $\varphi$, $\psi\in C_{per}^\infty([0,2\pi];\R^2)$ we consider variations of the curve  $\gamma$ of the form $\gamma_\varepsilon = \gamma + \varepsilon \varphi$ and of the vector field $\eta$ of the form $\eta_\varepsilon=\eta+\varepsilon\psi$. In view of the embedding $H_{imm}^2\hookrightarrow C^{1}$ the immersion $\gamma$ satisfies $|\partial_x\gamma|\geq c_0>0$ for a positive constant $c_0$ and $\gamma+\varepsilon\varphi$ is an immersion for $|\varepsilon|>0$ small enough.From~\cite[Ex.~12,~p.~25]{DoCarmoDiffGeo1976}
\begin{align*}
\kappa_\varepsilon = \frac{\det(\partial_x(\gamma+\varepsilon\varphi),\partial_{xx}(\gamma+\varepsilon\varphi))}{|\partial_x (\gamma+\varepsilon\varphi)|^3}
\end{align*}
we get with direct computations
\begin{align*}
 \frac{\mathrm{d}}{\mathrm{d}\varepsilon}\Big|_{\varepsilon = 0 } \kappa_\varepsilon 
& = \frac{\det( \partial_x \varphi, \partial_{xx}\gamma )}{|\partial_x\gamma|^3}+ \frac{\det(\partial_x\gamma, \partial_{xx}\varphi)}{|\partial_{x}\gamma|^3} - 3 \frac{\det(\partial_{x}\gamma,\partial_{xx}\gamma)}{|\partial_{x}\gamma|^5} \scp{\partial_{x}\gamma}{\partial_{x}\varphi} = I + II + III\,.
\end{align*}
We simplify the three terms with $\partial_x \tau = \kappa |\partial_x\gamma|\nu$ and $\nu = J\tau$ and the identities
\begin{align*}
\partial_{xx}\gamma & = \partial_{x} (|\partial_{x}\gamma|\tau) = \kappa |\partial_{x}\gamma|^2 \nu + \frac{1}{|\partial_{x}\gamma|} \scp{\partial_{x}\gamma}{\partial_{xx}\gamma}\tau\,,\quad \frac {\partial_{xx}\gamma}{|\partial_x \gamma|^2} = \kappa \nu + \frac{\scp{\partial_x \gamma}{\partial_{xx}\gamma}}{|\partial_x\gamma|^3}\tau \,, \\
\partial_{xx}\varphi & = \partial_{x} (|\partial_{x}\gamma|\partial_s \varphi) = |\partial_{x}\gamma|^2 \partial_{ss}\varphi + \frac{1}{|\partial_{x}\gamma|} \scp{\partial_{x}\gamma}{\partial_{xx}\gamma}\partial_s\varphi\,,
\end{align*}
according to 
\begin{align*}
I & = \frac{\det(\partial_{x}\varphi, \partial_{xx}\gamma)}{|\partial_{x}\gamma|^3}  = \kappa\scp{\partial_s \varphi}{\tau} - \frac{\scp{\partial_{x}\gamma}{\partial_{xx}\gamma}}{|\partial_{x}\gamma|^3}\scp{\partial_s \varphi}{\nu}\,, \\
II & = \frac{\det(\partial_{x}\gamma, \partial_{xx}\varphi)}{|\partial_{x}\gamma|^3}  = \scp{\partial_{ss} \varphi}{\nu} +\frac{\scp{\partial_{x}\gamma}{\partial_{xx}\gamma}}{|\partial_{x}\gamma|^3}\scp{\partial_s \varphi}{\nu}\,, \\
III & = - 3 \frac{\det(\partial_{x}\gamma,\partial_{xx}\gamma)}{|\partial_{x}\gamma|^5} \scp{\partial_{x}\gamma}{\partial_{x}\varphi}  = 
-3\kappa\scp{\tau}{\partial_{s}\varphi} \,,
\end{align*}
and the sum of all terms is just the variation of the curvature and leads to the term $\scp{\partial_{ss}\varphi}{\nu} -2\kappa\scp{\partial_s\varphi}{\tau}$. Moreover
\begin{align*}
 \frac{\mathrm{d}}{\mathrm{d}\varepsilon}\Big|_{\varepsilon = 0 } \diverg_{\gamma_\varepsilon}\eta & =  \frac{\mathrm{d}}{\mathrm{d}\varepsilon}\Big|_{\varepsilon = 0 } \Bigscp{ \frac{\partial_x\eta}{|\partial_x \gamma_\varepsilon|} }{\frac{\partial_x \gamma_\varepsilon}{|\partial_x \gamma_\varepsilon|}}=\Bigscp{ \frac{\partial_x \eta}{|\partial_x\gamma|} }{\frac{\partial_x \varphi}{|\partial_x\gamma|} } - 2\scp{\partial_x \eta}{\partial_x \gamma} \frac{\scp{\partial_x\gamma}{\partial_x\varphi}}{|\partial_x\gamma|^4} \\
& =\scp{ \partial_s \eta}{ \partial_s \varphi} - 2 (\diverg_\gamma \eta) \scp{\tau}{\partial_s \varphi} \,,\\
 \frac{\mathrm{d}}{\mathrm{d}\varepsilon}\Big|_{\varepsilon = 0 }  | \partial_s \eta |^2 & =  \frac{\mathrm{d}}{\mathrm{d}\varepsilon}\Big|_{\varepsilon = 0 } \frac{|\partial_x\eta|^2}{|\partial_x\gamma_\varepsilon|^2} =-2 \frac{|\partial_x\eta|^2}{|\partial_x\gamma_\varepsilon|^4} \scp{\partial_x \gamma}{\partial_x\varphi} =-2 |\partial_s \eta|^2 \scp{\tau}{\partial_s \varphi}  \,.
\end{align*}
Putting all together we get 
\begin{align*}
\frac{\mathrm{d}}{\mathrm{d}\varepsilon} E(\gamma_\varepsilon,\eta)\Big|_{\varepsilon = 0 }  &= \int_\gamma  (\kappa + \delta \diverg_\gamma\eta) 
\bsqb{ \scp{\partial_{ss}\varphi}{\nu} + 
\delta \scp{ \partial_s\eta  }{ \partial_s \varphi}  } \dv{s} \\&  + \int_\gamma \Bigl(-\frac{3}{2} (\kappa  + \delta\diverg_\gamma \eta)^2
-\frac{\lambda}{2}\vert\partial_s\eta\vert^2
\Bigr)\left\langle\tau,\partial_s\varphi\right\rangle \dv{s} \,.\end{align*}
For the variation in $\eta$ we find with an integration by parts
\begin{align*}
\frac{\mathrm{d}}{\mathrm{d}\varepsilon} E(\gamma,\eta_\varepsilon)\Big|_{\varepsilon = 0 } & =
 \frac{\mathrm{d}}{\mathrm{d}\varepsilon} \Big|_{\varepsilon = 0 }
 \int_\gamma \frac{1}{2}(\kappa + \delta \diverg_\gamma \eta_\varepsilon)^2 \dv{s} + \frac{\lambda}{2}\int_\gamma |\partial_s  \eta_\varepsilon|^2 \dv{s} \\
 &= \int_\gamma \delta (\kappa +\delta \diverg_\gamma  \eta) \diverg_\gamma\psi \dv{s}  + \lambda \int_\gamma \left\langle\partial_s \eta, \partial_s\psi\right\rangle \dv{s} \,.\end{align*}
By approximation, the necessary conditions hold for $\varphi\in H_{per}^2$ and $\psi\in H_{per}^1$.
\end{proof}

\begin{lemma}[Euler-Lagrange equations for $E_{LM}$]\label{ELEqnELM}
The variation of 
$$
E_{LM}:H_{imm}^2([0,2\pi];\R^2)\times H_{per}^1([0,2\pi];\R^2)\to \R
$$ 
in a point $(\gamma,\eta)\in H_{imm}^2([0,2\pi];\R^2)\times H_{per}^1([0,2\pi];\R^2)$ in the direction $(\varphi, \psi)\in H_{per}^2([0,2\pi];\R^2)\times H_{per}^1([0,2\pi];\R^2)$ is given by
\begin{align}
\frac{\delta E_{LM}}{\delta \gamma}[\varphi] & = \int_{\gamma}  (\kappa + \delta \diverg_\gamma\eta) 
\bsqb{ \scp{\partial_{ss}\varphi}{\nu} + 
\delta \scp{ \partial_s\eta  }{ \partial_s \varphi}  } \dv{s} \nonumber\\&  + \int_{\gamma} \Bigl(-\frac{3}{2} (\kappa  + \delta\diverg_\gamma \eta)^2
-\frac{\lambda}{2}\vert\partial_s\eta\vert^2+1
\Bigr)\left\langle\tau,\partial_s\varphi\right\rangle \dv{s} \,,\label{variationELMgamma} \\
\frac{\delta E_{LM}}{\delta \eta}[\psi] & = \int_{\gamma} \delta (\kappa +\delta \diverg_\gamma  \eta) \diverg_\gamma\psi \dv{s}  + \lambda \int_{\gamma} \left\langle\partial_s \eta, \partial_s\psi\right\rangle \dv{s} \,.\label{variationELMeta} 
\end{align}
\end{lemma}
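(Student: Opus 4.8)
The plan is to exploit the decomposition $E_{LM}=E+L$ recorded in~\eqref{EnergyLM} and to reduce the computation to the variation of $E$, which has already been carried out in Lemma~\ref{ELEqn}, together with the variation of the length functional $L$. By linearity of the directional derivative one has $\delta E_{LM}/\delta\gamma=\delta E/\delta\gamma+\delta L/\delta\gamma$, and since $L(\gamma)$ does not depend on $\eta$ one has $\delta E_{LM}/\delta\eta=\delta E/\delta\eta$. The latter identity immediately yields~\eqref{variationELMeta} from~\eqref{variationEeta} with no further work, so the entire content of the lemma for the $\eta$-variation is already contained in the previous one.

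For the $\gamma$-variation the only new ingredient is $\delta L/\delta\gamma$. First I would take $\gamma_\varepsilon=\gamma+\varepsilon\varphi$ with $\varphi\in C_{per}^\infty([0,2\pi];\R^2)$, differentiate $L(\gamma_\varepsilon)=\int_0^{2\pi}|\partial_x\gamma_\varepsilon|\dv{x}$ at $\varepsilon=0$, and convert to arc length using $\partial_x\varphi=|\partial_x\gamma|\,\partial_s\varphi$:
\begin{align*}
\frac{\mathrm{d}}{\mathrm{d}\varepsilon}\Big|_{\varepsilon=0}L(\gamma_\varepsilon)
=\int_0^{2\pi}\frac{\scp{\partial_x\gamma}{\partial_x\varphi}}{|\partial_x\gamma|}\dv{x}
=\int_0^{2\pi}\scp{\tau}{\partial_x\varphi}\dv{x}
=\int_\gamma\scp{\tau}{\partial_s\varphi}\dv{s}\,.
\end{align*}
This is precisely the expression already appearing in Lemma~\ref{lemma:AdmissibleConstraint}. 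Adding it to~\eqref{variationEgamma} contributes the summand $+1\cdot\scp{\tau}{\partial_s\varphi}$ inside the second integral, which is exactly the difference between~\eqref{variationEgamma} and~\eqref{variationELMgamma}, while the first integrals coincide verbatim. This establishes~\eqref{variationELMgamma}.

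Since the variation of $E$ in Lemma~\ref{ELEqn} was derived first for smooth directions and then extended by approximation to $\varphi\in H_{per}^2$ and $\psi\in H_{per}^1$, I would invoke the same density argument here: the right-hand sides of~\eqref{variationELMgamma} and~\eqref{variationELMeta} are continuous in $(\varphi,\psi)$ with respect to the $H_{per}^2\times H_{per}^1$ topology, so the identities valid for smooth test functions persist on the full space. There is no genuine obstacle in this argument; the substantive computation—the variation of the curvature and of $\diverg_\gamma\eta$—was the content of Lemma~\ref{ELEqn}, and the present lemma merely appends the elementary length variation. The only point requiring minimal care is the arc-length conversion of $\delta L/\delta\gamma$ and the verification that its coefficient is the constant $1$ that distinguishes $E_{LM}$ from $E$.
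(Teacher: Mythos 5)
Your proposal is correct and follows essentially the same route as the paper, which likewise reduces the lemma to Lemma~\ref{ELEqn} plus the observation that the extra term $L(\gamma)$ contributes $\int_\gamma\scp{\tau}{\partial_s\varphi}\dv{s}$ to the $\gamma$-variation and nothing to the $\eta$-variation. Your explicit computation of $\delta L/\delta\gamma$ and the density remark are just slightly more detailed versions of what the paper leaves implicit.
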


\begin{proof}
 The only difference in the variation of $E_{LM}$ compared to the variation of $E$ is the additional term $L(\gamma)$ which leads to an additional term of $\int_\gamma\langle \tau,\partial_s \varphi\rangle\dv{s}$ in the variation.
\end{proof}

\begin{lemma}[Gradient flow for $E_{LM}$]\label{Gradientflow}
Let $(\gamma_0,\eta_0)\in H^4_{imm}([0,2\pi];\mathbb{R}^2)\times  H^3_{per}([0,2\pi];\mathbb{R}^2)$ and let $T>0$. 
Suppose that  $\gamma:[0,T]\times [0,2\pi]\to\mathbb{R}^2$
is a  time-dependent family of regular plane closed curves
at least of class $H^1$ in the time variable and $H^4$ in the space variable
and that $\eta:[0,T]\times [0,2\pi]\to\mathbb{R}^2$ is a time-dependent family
of vector fields of class $H^1$ in the time variable and $H^3$ in the space variable.
Then $(\gamma,\eta)$ is a solution to the (formal) $L^2$--gradient flow of $E_{LM}$
(obtained considering only normal variations of the curve)
in the time interval $[0,T]$ with initial datum $(\gamma_0,\eta_0)$
if and only if $(\gamma,\eta)$
satisfy the following system for all $t\in[0,T]$, $x\in [0,2\pi]$:
\begin{equation}\label{motioneq}
\begin{cases}
\partial_t \gamma^\perp  = \bsqb{ - \partial_{ss}(\kappa + \delta
\diverg_\gamma\eta)
+   \delta \partial_s [(\kappa + \delta \diverg_\gamma\eta)
\scp{\partial_s \eta }{\nu}]
- \frac{1}{2}  (\kappa  + \delta\diverg_\gamma \eta)^2 \kappa  -
\frac{\lambda}{2} |\partial_s \eta|^2 \kappa + \kappa}\nu\,, \\
\partial_t \eta  =
\lambda \partial_{ss} \eta + \delta  \nabla_s (\kappa +\delta
\diverg_\gamma  \eta) +\delta (\kappa + \delta \diverg_\gamma\eta) \kappa
\nu\,,\\
(\gamma(0,x),\eta(0,x))=(\gamma_0,\eta_0)
\end{cases}
\end{equation}
where $\partial_t \gamma^\perp $ denotes the normal component of the velocity vector.
\end{lemma}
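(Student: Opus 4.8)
The plan is to read off the $L^2$-gradient flow directly from the first-variation formulas \eqref{variationELMgamma} and \eqref{variationELMeta} of Lemma~\ref{ELEqnELM}. Writing $K=\kappa+\delta\diverg_\gamma\eta$ for brevity, I would rewrite each variation as an $L^2$-pairing against its test direction and then use the defining property of the flow — that the normal velocity is the negative $L^2$-gradient of $E_{LM}$ with respect to \emph{normal} variations of $\gamma$, and that $\partial_t\eta$ is the negative $L^2$-gradient with respect to $\eta$ — so that the fundamental lemma of the calculus of variations produces the pointwise system \eqref{motioneq}. The equivalence in both directions follows because, under the hypotheses $\gamma\in H^4$, $\eta\in H^3$ in space, one has $\kappa\in H^2$ and $\diverg_\gamma\eta\in H^2$; hence every term arising after integration by parts lies in $L^2$, and the weak identities (for all test directions) are equivalent to the strong equations (a.e.).

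The $\eta$-equation is the easier half. Starting from \eqref{variationELMeta}, periodicity gives $\lambda\int_\gamma\scp{\partial_s\eta}{\partial_s\psi}\dv{s}=-\lambda\int_\gamma\scp{\partial_{ss}\eta}{\psi}\dv{s}$, and the integration-by-parts formula for curves applied with $f=\delta K$ yields $\int_\gamma\delta K\diverg_\gamma\psi\dv{s}=-\int_\gamma\delta\scp{\nabla_s K}{\psi}\dv{s}-\int_\gamma\delta\kappa K\scp{\psi}{\nu}\dv{s}$. Thus $\tfrac{\delta E_{LM}}{\delta\eta}[\psi]=-\int_\gamma\scp{\lambda\partial_{ss}\eta+\delta\nabla_s K+\delta K\kappa\nu}{\psi}\dv{s}$, and equating $\int_\gamma\scp{\partial_t\eta}{\psi}\dv{s}$ with $-\tfrac{\delta E_{LM}}{\delta\eta}[\psi]$ for all $\psi$ gives the second line of \eqref{motioneq} by the fundamental lemma.

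For the $\gamma$-equation I restrict the admissible directions to normal ones, $\varphi=\phi\nu$ with $\phi$ scalar, reflecting that the flow is taken with respect to normal variations; tangential directions correspond to reparametrizations and, by the invariance \eqref{InvarianceReparametrization}, contribute nothing. Using the Frenet relations $\partial_s\tau=\kappa\nu$ and $\partial_s\nu=-\kappa\tau$ I compute $\partial_s\varphi=(\partial_s\phi)\nu-\phi\kappa\tau$ and $\partial_{ss}\varphi=(\partial_{ss}\phi-\phi\kappa^2)\nu-(2\kappa\partial_s\phi+\phi\partial_s\kappa)\tau$, so that $\scp{\partial_{ss}\varphi}{\nu}=\partial_{ss}\phi-\phi\kappa^2$, $\scp{\tau}{\partial_s\varphi}=-\phi\kappa$, and $\scp{\partial_s\eta}{\partial_s\varphi}=(\partial_s\phi)\scp{\partial_s\eta}{\nu}-\phi\kappa\diverg_\gamma\eta$ (using $\scp{\partial_s\eta}{\tau}=\diverg_\gamma\eta$). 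Substituting these into \eqref{variationELMgamma} and integrating by parts to strip all derivatives off $\phi$ turns $\tfrac{\delta E_{LM}}{\delta\gamma}[\phi\nu]$ into $\int_\gamma(\cdots)\phi\dv{s}$; setting the scalar normal velocity equal to the negative of the bracket and invoking the fundamental lemma yields the first line of \eqref{motioneq}.

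The main obstacle is the algebraic bookkeeping in this last step. After the two integrations by parts produce the term $\partial_{ss}K$ and the term $-\delta\partial_s[K\scp{\partial_s\eta}{\nu}]$, one is left with the zeroth-order contributions $-\kappa^2K$, $-\delta\kappa K\diverg_\gamma\eta$, and $+\tfrac{3}{2}\kappa K^2+\tfrac{\lambda}{2}\kappa|\partial_s\eta|^2-\kappa$. The crucial simplification is that $-\kappa^2K-\delta\kappa K\diverg_\gamma\eta=-\kappa K(\kappa+\delta\diverg_\gamma\eta)=-\kappa K^2$, which combines with $+\tfrac{3}{2}\kappa K^2$ to give exactly $+\tfrac{1}{2}\kappa K^2$; a final sign change (from the negative gradient) then reproduces the stated right-hand side $-\partial_{ss}K+\delta\partial_s[K\scp{\partial_s\eta}{\nu}]-\tfrac{1}{2}K^2\kappa-\tfrac{\lambda}{2}|\partial_s\eta|^2\kappa+\kappa$. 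Tracking every sign through these cancellations is the delicate part; the legitimacy of the two integrations by parts (requiring $K\in H^2$ and $K\scp{\partial_s\eta}{\nu}\in H^1$) is precisely where the $H^4\times H^3$ regularity hypothesis enters.
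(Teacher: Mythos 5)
Your proposal is correct and follows essentially the same route as the paper: both start from the first-variation formulas \eqref{variationELMgamma}--\eqref{variationELMeta}, specialize to normal variations $\varphi=u\nu$ with $\partial_s\varphi=(\partial_s u)\nu-u\kappa\tau$ and $\scp{\partial_{ss}\varphi}{\nu}=\partial_{ss}u-u\kappa^2$, integrate by parts to strip derivatives off the test functions, and read off the strong form; your cancellation $-\kappa^2K-\delta\kappa K\diverg_\gamma\eta+\tfrac32\kappa K^2=\tfrac12\kappa K^2$ is exactly the simplification carried out in the paper's computation. The algebra and signs check out, so no changes are needed.
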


\begin{proof}
We characterize, at least formally, the $L^2$--gradient flow dynamics
of $E_{LM}$.

For $\varphi$, $\psi\in C_{per}^\infty([0,2\pi];\R^2)$ and $|\epsilon|>0$ small enough, we consider variations of the vector field $\eta$ of the form
$\eta_\varepsilon=\eta+\varepsilon\psi$
and
of the curve  $\gamma$ of the form
$\gamma_\varepsilon = \gamma + \varepsilon \varphi$,
and we require that
$\gamma_\varepsilon$ be a normal variation of $\gamma$ with $\varphi=u\nu$, where $\nu = J\tau$
is the unit normal vector to $\gamma$, and hence
\begin{align*}
\varphi =u\nu\,,\quad \partial_s\varphi = \partial_su\nu-u\kappa\tau\,,\quad  \partial^2_s\varphi =\left(\partial^2_su-u\kappa^2\right)\nu
-\left(2\partial_s u\kappa + u\partial_s \kappa\right)\tau\,.
\end{align*}
In order to pass from the variations obtained in Lemma~\ref{ELEqn} to expressions that do not involve derivatives of $\varphi$ and $\psi$, we need to integrate by parts and this calculation needs additional regularity for $\gamma$ and $\eta$ compared to Theorem~\ref{regularity-stationary-points}. From~\eqref{variationELMgamma}
\begin{align*}
&\left.\frac{\mathrm d} {{\mathrm d} \varepsilon}
E_{LM}(\gamma_\varepsilon, \eta)\right|_{\varepsilon=0}
=\int_{\gamma} \left( \kappa+\delta\diverg_\gamma \eta\right)\partial_{ss} u
-\kappa^2\left( \kappa+\delta\diverg_\gamma \eta\right) u\\
&-\kappa\left(-\frac{3}{2}\left(\kappa+\delta\diverg_\gamma \eta\right)^2
-\frac{\lambda}{2}\vert\partial_s\eta\vert^2+1\right)u
+\delta\left(\kappa+\delta\diverg_\gamma \eta\right)\left\langle\partial_s\eta,\partial_su\nu-u\kappa\tau\right\rangle\dv{s}\\
&=\int_{\gamma} \left( \kappa+\delta\diverg_\gamma \eta\right)\partial_{ss} u
+\delta\left(\kappa+\delta\diverg_\gamma \eta\right)\left\langle\partial_s\eta,\nu\right\rangle\partial_su
+\left(\frac{1}{2}\left(\kappa+\delta\diverg_\gamma \eta\right)^2
+\frac{\lambda}{2}\vert\partial_s\eta\vert^2-1\right)\kappa
u\dv{s}\,.
\end{align*}
Integrating by parts we obtain
\begin{align*}
\left.\frac{\mathrm d} {{\mathrm d} \varepsilon}
E_{LM}(\gamma_\varepsilon, \eta)\right|_{\varepsilon=0}
&=\int_{\gamma} \partial_{ss}\left( \kappa+\delta\diverg_\gamma \eta\right) u
-\delta\partial_s\left[\left(\kappa+\delta\diverg_\gamma \eta\right)\left\langle\partial_s\eta,\nu\right\rangle\right]u\\
&+\left(\frac{1}{2}\left(\kappa+\delta\diverg_\gamma \eta\right)^2
+\frac{\lambda}{2}\vert\partial_s\eta\vert^2-1\right)\kappa
u\dv{s}\,,
\end{align*}
and we rearrange terms to obtain 
\begin{align}\label{derivativegamma}
\frac{\partial E_{LM}}{\partial \gamma}(\gamma,\eta)[\varphi]
=\int_{\gamma} \left\langle \bsqb{\partial_{ss}(\kappa + \delta
\diverg_\gamma\eta)
-   \delta \partial_s [(\kappa + \delta \diverg_\gamma\eta)
\scp{\partial_s \eta }{\nu}]\right.\nonumber\\
\left.
+ \frac{1}{2}  (\kappa  + \delta\diverg_\gamma \eta)^2 \kappa  +
\frac{\lambda}{2} |\partial_s \eta|^2 \kappa -
\kappa}\nu,\varphi\right\rangle\dv{s}\,.
\end{align}
Concerning $\eta$ we integrate by parts in~\eqref{variationELMeta} and get 
\begin{align}\label{derivativeeta}
\frac{\partial E_{LM}}{\partial \eta}(\gamma,\eta)[\psi]=
- \int_\gamma \left\langle
\lambda \partial_{ss} \eta + \delta  \nabla_s (\kappa +\delta
\diverg_\gamma  \eta) +\delta (\kappa + \delta \diverg_\gamma\eta) \kappa
\nu,\psi\right\rangle\dv{s}\,.
\end{align}
We proceed as in Theorem~\ref{RegularityELM} and consider $H_{imm}^4$ as  open subset in $H_{per}^4$. There exists an $r>0$ with $B(\gamma,r)\subset H_{imm}^4\subset H_{per}^4$. 
The Gateaux derivatives in the directions $\varphi\in H^4_{per}$ and $\psi\in H_{per}^3$ are given by~\eqref{derivativegamma} and by~\eqref{derivativeeta}. 
It is easy to see that the partial Fr\'echet derivatives exist and are 
continuous on $B((\gamma, \eta),r)$,
thus the map $E_{LM}$ is continuously Fr\'echet differentiable in a neighborhood of an immersion $\gamma$ and for $(\varphi,\psi)\in H_{per}^4\times H_{per}^3$ we have the representation 
\begin{equation*}
E'(\gamma,\eta)[\varphi,\psi]=
 \left.\frac{\mathrm d} {{\mathrm d} \varepsilon}\right|_{\varepsilon=0}
E(\gamma + \varepsilon \varphi, \eta+\varepsilon\psi)\,.
\end{equation*}
Therefore $E'_{LM}$ is the gradient of $E_{LM}$
and if a time-dependent family $(\gamma_t, \eta_t)_{t\in [0,T)}$ moves with velocity equal to the negative gradient of $E_{LM}$, then this family is a solution of the associated gradient flow.
\end{proof}

We also prove that the energy $E_{LM}$ decreases along
the flow.
To do so we introduce some notation:
\begin{align*}
 z&=\kappa+\delta\diverg_\gamma \eta\\
V&=- \partial_{ss}z
+   \delta \partial_s ( z \scp{\partial_s \eta }{\nu} )
- \frac{1}{2} z^2 \kappa  - \frac{\lambda}{2} |\partial_s \eta|^2
\kappa + \kappa\,, \\
\vec{W} & =
\lambda \partial_{ss} \eta + \delta  \partial_s z\tau +\delta z
\kappa \nu
\,.
\end{align*}

We give here some formulas that describe the evolution of
geometric quantities under the gradient flow of $E_{LM}$.

\begin{lemma}
Suppose that $\gamma_t=\gamma_t^\perp$.
We have
\begin{subequations}\label{UsefulFormula}
\begin{align}\label{UsefulFormulaA}
\partial_t\left(\mathrm{d}s\right)&=-\kappa V\mathrm{d}s\,,\\
\label{UsefulFormulaB}
\partial_t\partial_s\cdot&=\partial_s\partial_t\cdot+\kappa
V\partial_s\cdot\,,\\
\label{UsefulFormulaC}
\partial_t\tau&=\partial_t\partial_s\gamma
=\partial_s\partial_t\gamma+\kappa V\partial_s\gamma=\partial_sV\nu\,,\\
\label{UsefulFormulaD}
\partial_t\kappa &=\partial_{ss}V+V\kappa^2\,,\\
\label{UsefulFormulaE}
\partial_t\partial_s\eta&=\partial_s\partial_t\eta+kV\partial_s\eta=\partial_s\vec{W}+\kappa
V\partial_s\eta\,,\\
\label{UsefulFormulaF}
\partial_t\diverg_\gamma \eta&
=\langle\partial_s\vec{W},\tau\rangle
+\kappa
V\diverg_\gamma \eta+\left\langle\partial_s  \eta,\partial_sV\nu\right\rangle\,,\\
\label{UsefulFormulaG}
\partial_t z&=\partial_{ss}V+\kappa V
z+\delta\langle\partial_s\vec{W},\tau\rangle+\delta\left\langle\partial_s    \eta,\partial_sV\nu\right\rangle\,.
\end{align}
 
\end{subequations}

\end{lemma}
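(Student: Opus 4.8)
The plan is to derive the seven identities \eqref{UsefulFormulaA}--\eqref{UsefulFormulaG} in the order listed, each feeding into the next. These are purely kinematic relations: the only inputs are the normality of the velocity, $\partial_t\gamma = V\nu$, the evolution $\partial_t\eta = \vec{W}$, and the Frenet relations $\partial_s\tau = \kappa\nu$, $\partial_s\nu = -\kappa\tau$ coming from $\nu = J\tau$; the explicit expressions for $V$ and $\vec{W}$ from the gradient flow are not used. I would begin with \eqref{UsefulFormulaA}. Since $\mathrm{d}s = |\partial_x\gamma|\,\mathrm{d}x$ and $\partial_t$ commutes with the fixed coordinate derivative $\partial_x$, one has $\partial_t|\partial_x\gamma| = |\partial_x\gamma|^{-1}\langle\partial_x\gamma, \partial_x(V\nu)\rangle$; inserting $\partial_x\nu = -\kappa|\partial_x\gamma|\tau$ and using $\langle\partial_x\gamma,\nu\rangle = 0$, $\langle\partial_x\gamma,\tau\rangle = |\partial_x\gamma|$, the normal part $\partial_xV\,\nu$ drops out and leaves $\partial_t|\partial_x\gamma| = -\kappa V|\partial_x\gamma|$, which is \eqref{UsefulFormulaA}. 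Writing $\partial_s = |\partial_x\gamma|^{-1}\partial_x$ and differentiating in $t$, the term produced by differentiating $|\partial_x\gamma|^{-1}$ is precisely $\kappa V\partial_s\,\cdot$ by \eqref{UsefulFormulaA}, which gives the commutator \eqref{UsefulFormulaB}.

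Next, \eqref{UsefulFormulaC} is \eqref{UsefulFormulaB} applied to $\gamma$: since $\partial_s\gamma = \tau$ and $\partial_s\partial_t\gamma = \partial_s(V\nu) = \partial_sV\,\nu - \kappa V\tau$, the tangential contributions cancel and leave $\partial_t\tau = \partial_sV\,\nu$. Differentiating $\langle\tau,\nu\rangle = 0$ and $|\nu|^2 = 1$ in $t$ then yields the companion relation $\partial_t\nu = -\partial_sV\,\tau$, which I record for the next step. For \eqref{UsefulFormulaD} I would differentiate $\partial_s\tau = \kappa\nu$ in $t$, commute $\partial_t$ and $\partial_s$ on the left via \eqref{UsefulFormulaB}, substitute $\partial_t\tau = \partial_sV\,\nu$ and $\partial_t\nu = -\partial_sV\,\tau$, and compare components: the tangential parts cancel identically and the normal part gives $\partial_t\kappa = \partial_{ss}V + \kappa^2V$.

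The remaining identities concern the surfactant field. Formula \eqref{UsefulFormulaE} is \eqref{UsefulFormulaB} applied to $\eta$ together with $\partial_t\eta = \vec{W}$. For \eqref{UsefulFormulaF} I would write $\diverg_\gamma\eta = \langle\partial_s\eta,\tau\rangle$ and differentiate in $t$, substituting \eqref{UsefulFormulaE} for $\partial_t\partial_s\eta$ and \eqref{UsefulFormulaC} for $\partial_t\tau$; then $\langle\partial_s\vec{W},\tau\rangle$, the term $\kappa V\langle\partial_s\eta,\tau\rangle = \kappa V\diverg_\gamma\eta$, and $\langle\partial_s\eta,\partial_sV\,\nu\rangle$ appear directly. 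Finally \eqref{UsefulFormulaG} follows from $z = \kappa + \delta\diverg_\gamma\eta$ by adding \eqref{UsefulFormulaD} and $\delta$ times \eqref{UsefulFormulaF}; the only simplification is the recombination $\kappa^2V + \delta\kappa V\diverg_\gamma\eta = \kappa V(\kappa + \delta\diverg_\gamma\eta) = \kappa V z$.

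Since the explicit forms of $V$ and $\vec{W}$ never enter, these computations are elementary once the commutator \eqref{UsefulFormulaB} is available, and I expect no genuine obstacle. The only point requiring care is bookkeeping: consistently exploiting the normality assumption $\partial_t\gamma = V\nu$ so that no tangential velocity term is present, and tracking the signs in the Frenet relations and in $\partial_t\nu = -\partial_sV\,\tau$. Had a tangential component been allowed in the velocity, additional Lie-transport terms would enter \eqref{UsefulFormulaB} and propagate into all subsequent identities, so it is the purely normal motion that keeps the formulas in the stated form.
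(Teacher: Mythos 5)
Your proposal is correct and follows essentially the same route as the paper: the identities \eqref{UsefulFormulaE}--\eqref{UsefulFormulaG} are derived exactly as in the paper's proof (commutator applied to $\eta$, then $\partial_t\langle\partial_s\eta,\tau\rangle$, then summation for $z$). The only difference is that you derive \eqref{UsefulFormulaA}--\eqref{UsefulFormulaD} from scratch, whereas the paper simply cites \cite[Lemma 2.1]{DzuikKuwertSchaetzle2002} for these standard formulas; your computations there are the standard ones and are correct.
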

\begin{proof}
Formulas~\eqref{UsefulFormulaA}--\eqref{UsefulFormulaD} have been derived several times in the literature, we refer for instance to~\cite[Lemma 2.1]{DzuikKuwertSchaetzle2002}.
Let us pass to compute the evolution equation for $\partial_s \eta$.
Thanks to~\eqref{UsefulFormulaB}
$\partial_t\partial_s\eta=\partial_s\partial_t \eta +\kappa V \partial_s\eta$.
Since the equation of motion for $\eta$ reads $\partial_t\eta =\vec{W}$,
we can conclude $\partial_t\partial_s\eta=\partial_s\vec{W} +\kappa V \partial_s\eta$, that is~\eqref{UsefulFormulaE}. With this formula we prove~\eqref{UsefulFormulaF} since
\begin{align*}
\partial_t\diverg_\gamma \eta&
=\left\langle\partial_t\partial_s\eta, \tau\right\rangle
+\left\langle\partial_s\eta, \partial_t\tau\right\rangle\\
&=\langle\partial_s\vec{W},\tau\rangle
+\left\langle\kappa V\partial_s\eta,\tau\right\rangle
+\left\langle\partial_s\eta, \partial_sV\nu\right\rangle
=\langle\partial_s\vec{W},\tau\rangle
+\kappa V\diverg_\gamma \eta+\left\langle\partial_s  \eta,\partial_sV\nu\right\rangle\,.
\end{align*}
Finally, combining~\eqref{UsefulFormulaD} and~\eqref{UsefulFormulaE} we get
\begin{align*}
\partial_t z
&=\partial_t\left(\kappa+\delta\diverg_\gamma\eta\right)=
\partial_{ss}V+V\kappa^2+\delta(\langle\partial_s\vec{W},\tau\rangle
+\kappa V\diverg_\gamma \eta+\left\langle\partial_s  \eta,\partial_sV\nu\right\rangle) \\
&=\partial_{ss}V+\kappa V
z+\delta\left\langle\partial_s\vec{W},\tau\right\rangle+\delta\langle\partial_s \eta,\partial_sV\nu\rangle\
\end{align*}
and this is~\eqref{UsefulFormulaG}.
\end{proof}

\begin{lemma}\label{decreasing-in-time}
Let $(\gamma_t,\eta_t)$ be a time dependent family of closed curves and
vector fields
evolving under the law~\eqref{motioneq} with $\gamma_t=\gamma_t^\perp$. Then
\begin{equation*}
\partial_t E_{LM}(\gamma_t,\eta_t)=\int_{\gamma_t}
-V^2-\vert\vec{W}\vert^2\dv{s}\,.
\end{equation*}
\end{lemma}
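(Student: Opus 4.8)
The plan is to read the statement off from the $L^2$--gradient structure established in Lemma~\ref{Gradientflow}, rather than to differentiate the three terms of $E_{LM}$ by hand. Assuming the solution has the regularity of Lemma~\ref{Gradientflow} (so that the curve stays an immersion and $E_{LM}$ is continuously Fr\'echet differentiable along it, while $t\mapsto(\gamma_t,\eta_t)$ is differentiable into the relevant Banach space), the chain rule applies and yields
\begin{align*}
 \partial_t E_{LM}(\gamma_t,\eta_t) = \frac{\partial E_{LM}}{\partial\gamma}(\gamma_t,\eta_t)[\partial_t\gamma] + \frac{\partial E_{LM}}{\partial\eta}(\gamma_t,\eta_t)[\partial_t\eta]\,.
\end{align*}

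Next I would substitute the two partial gradients derived in Lemma~\ref{Gradientflow}. Using $z=\kappa+\delta\diverg_\gamma\eta$ and the definition of $V$, the vector field in the bracket of~\eqref{derivativegamma} is exactly $-V\nu$, so that $\frac{\partial E_{LM}}{\partial\gamma}[\varphi]=-\int_\gamma V\scp{\nu}{\varphi}\dv{s}$; likewise, recognising $\nabla_s z=(\partial_s z)\tau$, the integrand of~\eqref{derivativeeta} is precisely $\vec W$, so that $\frac{\partial E_{LM}}{\partial\eta}[\psi]=-\int_\gamma\scp{\vec W}{\psi}\dv{s}$. Matching the definitions of $V$ and $\vec W$ to the brackets is the only bookkeeping required here, and it is immediate once one writes everything in terms of $z$.

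Finally I would insert the equations of motion~\eqref{motioneq}. The hypothesis $\gamma_t=\gamma_t^\perp$ means there is no tangential velocity, so $\partial_t\gamma=V\nu$ and $\scp{\nu}{\partial_t\gamma}=V$, while the second line of~\eqref{motioneq} gives $\partial_t\eta=\vec W$. Substituting $\varphi=\partial_t\gamma$ and $\psi=\partial_t\eta$ turns the two terms into $-\int_{\gamma_t}V^2\dv{s}$ and $-\int_{\gamma_t}\vert\vec W\vert^2\dv{s}$, which is the claim.

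The main obstacle is the justification of the chain rule: one must know that $t\mapsto E_{LM}(\gamma_t,\eta_t)$ is differentiable with derivative factoring through the Fr\'echet derivative, which is exactly what the $H^4/H^3$ spatial regularity together with $H^1$ regularity in time provides. I would also note that only the normal part of $\partial_t\gamma$ contributes, because the $\gamma$--gradient is a normal field, consistent with the reparametrisation invariance~\eqref{InvarianceReparametrization}; this is why the restriction $\gamma_t=\gamma_t^\perp$ is harmless. A direct alternative would differentiate each summand using the evolution formulas~\eqref{UsefulFormula} (in particular $\partial_t(\dv{s})=-\kappa V\dv{s}$, see~\eqref{UsefulFormulaA}) and integrate by parts, but the sign bookkeeping in those integrations by parts is precisely where the computation becomes error-prone, so the gradient route is preferable.
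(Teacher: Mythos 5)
Your argument is correct, but it is organized differently from the paper's. The paper proves Lemma~\ref{decreasing-in-time} by brute force: it differentiates the three summands of the energy in time using the commutation and evolution formulas~\eqref{UsefulFormula} (in particular $\partial_t(\dv{s})=-\kappa V\dv{s}$ and the evolution of $z=\kappa+\delta\diverg_\gamma\eta$), and then integrates by parts to collect everything into $-V^2-\vert\vec W\vert^2$. You instead invoke the chain rule and reuse the gradient representations~\eqref{derivativegamma} and~\eqref{derivativeeta} already established in Lemma~\ref{Gradientflow}; your identifications of the bracket in~\eqref{derivativegamma} with $-V\nu$ and of the integrand in~\eqref{derivativeeta} with $\vec W$ (via $\nabla_s z=(\partial_s z)\tau$) are both correct, as is the observation that $\gamma_t=\gamma_t^\perp$ gives $\partial_t\gamma=V\nu$, so that substituting the equations of motion~\eqref{motioneq} yields the claim. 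Two remarks on the trade-off. First, your route is shorter and less error-prone precisely because the integrations by parts were already performed once in the derivation of~\eqref{derivativegamma}--\eqref{derivativeeta}; the paper's route redoes an equivalent computation starting from the time derivative, but as a by-product it establishes the formulas~\eqref{UsefulFormulaA}--\eqref{UsefulFormulaG}, which are used again in the subsequent remark and are of independent interest for the evolution problem. Second, note that~\eqref{derivativegamma} was derived in the paper only for normal variations $\varphi=u\nu$; since the hypothesis $\gamma_t=\gamma_t^\perp$ puts you exactly in that case, this is harmless here, but it is worth stating explicitly rather than appealing to the general chain rule with an arbitrary direction $\varphi$. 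The chain-rule justification itself is at the same (formal) level of rigour as the paper's own computation, so no genuine gap arises.
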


\begin{proof}
With an extensive use of the formulas of the previous lemma, we can
compute 
\begin{align*}
&\quad\frac{\mathrm{d}}{\mathrm{d}t}\left[\int_{\gamma_t}
\frac{1}{2}\left(\kappa+\delta\diverg_\gamma \eta\right)^2
+\frac{\lambda}{2}\vert\partial_s\eta\vert^2
+1\dv{s}\right]=\frac{\mathrm{d}}{\mathrm{d}t}\left[\int_{\gamma_t}
\frac{1}{2}z^2+\frac{\lambda}{2}\vert\partial_s\eta\vert^2
+1\dv{s}\right]\\
&=\int_{\gamma_t} \partial_{ss}Vz+\kappa V
z^2+\delta\langle\partial_s\vec{W},\tau\rangle
z+\delta\left\langle\partial_s  \eta,\partial_sV\nu\right\rangle z
+\lambda\langle\partial_s\eta, \partial_s\vec{W}\rangle
+\lambda\left\langle\partial_s\eta, \kappa V\partial_s\eta\right\rangle\\
&-\frac{1}{2}\kappa V z^2
-\frac{\lambda}{2} \kappa V\vert\partial_s\eta\vert^2-\kappa
V\dv{s}\\
&=\int_{\gamma_t} \partial_{ss}zV+\frac{1}{2}\kappa z^2 V
+\frac{\lambda}{2} \kappa \vert\partial_s\eta\vert^2V-\kappa
V-\delta\partial_s\left(\left\langle\partial_s\eta,\nu\right\rangle
z\right)V\\
&-\lambda\langle\partial_{ss}\eta, \vec{W}\rangle
-\delta\langle\partial_s(z)\tau,\vec{W}\rangle
-\delta\langle z\kappa\nu,\vec{W}\rangle\dv{s}\\
&=\int_{\gamma_t} -V^2-\langle\vec{W},\vec{W}\rangle\dv{s}\,,
\end{align*}
as desired.
\end{proof}

\begin{corollary}
Let $\gamma:[0,T]\times [0,2\pi]\to\mathbb{R}^2$
be a family of time-dependent smooth and regular plane closed curves
and $\eta:[0,T]\times [0,2\pi]\to\mathbb{R}^2$ a time-dependent family
of vector fields
evolving under the law~\eqref{motioneq} with  $\gamma_t=\gamma_t^\perp$ in the time interval $[0,T]$
with initial datum $(\gamma_0(x),\eta_0(x))=(\gamma(0,x),\eta(0,x))$.
Then
for every $t\in [0,T]$ the energy $E_{LM}(\gamma(t,x),\eta(t,x))$
at time $t$ is bounded by the energy of the initial
datum
$E_{LM}(\gamma_0,\eta_0)$.
\end{corollary}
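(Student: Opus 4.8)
The plan is to deduce this corollary directly from Lemma~\ref{decreasing-in-time}, which already computes the time derivative of the energy along the flow. The entire content of the statement is the monotonicity of the map $t\mapsto E_{LM}(\gamma_t,\eta_t)$, so once the derivative is available in the closed form supplied by the previous lemma, only an elementary integration in time remains.

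First I would invoke Lemma~\ref{decreasing-in-time} to write
\begin{equation*}
\frac{\mathrm{d}}{\mathrm{d}t}E_{LM}(\gamma_t,\eta_t)=\int_{\gamma_t}-V^2-\vert\vec{W}\vert^2\dv{s}\,,
\end{equation*}
where $V$ and $\vec{W}$ are the scalar and vector fields introduced before the lemma. The key observation is that the integrand is pointwise nonpositive, being minus a sum of squares; consequently the right-hand side is $\leq 0$ for every $t\in[0,T]$, so the energy is nonincreasing along the flow.

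Next I would integrate this inequality from $0$ to an arbitrary $t\in[0,T]$. Since $\gamma$ and $\eta$ are assumed smooth in space and at least $H^1$ (in fact smooth) in time, the map $t\mapsto E_{LM}(\gamma_t,\eta_t)$ is absolutely continuous and the fundamental theorem of calculus applies, giving
\begin{equation*}
E_{LM}(\gamma_t,\eta_t)-E_{LM}(\gamma_0,\eta_0)=\int_0^t\int_{\gamma_\tau}-V^2-\vert\vec{W}\vert^2\dv{s}\,\mathrm{d}\tau\leq 0\,,
\end{equation*}
which is exactly the claimed bound $E_{LM}(\gamma_t,\eta_t)\leq E_{LM}(\gamma_0,\eta_0)$.

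This argument is essentially immediate given the previous lemma, so there is no substantial obstacle at this stage; the only point requiring a word of justification is the differentiability of $t\mapsto E_{LM}(\gamma_t,\eta_t)$ needed to apply the fundamental theorem of calculus. This is guaranteed by the smoothness hypotheses on $\gamma$ and $\eta$ together with the explicit form of $E_{LM}$, and all the genuine work—the differentiation of the energy and the integrations by parts that collapse the derivative into $-\int_{\gamma_t}V^2+\vert\vec{W}\vert^2\dv{s}$—has already been carried out in Lemma~\ref{decreasing-in-time}.
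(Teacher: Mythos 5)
Your proposal is correct and follows exactly the same route as the paper: invoke Lemma~\ref{decreasing-in-time} to see that $\partial_t E_{LM}(\gamma_t,\eta_t)=-\int_{\gamma_t}V^2+|\vec{W}|^2\dv{s}\leq 0$ and conclude monotonicity in $t$. Your additional remark on the absolute continuity of $t\mapsto E_{LM}(\gamma_t,\eta_t)$ is a small but welcome precision that the paper leaves implicit.
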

\begin{proof}
Suppose that $(\gamma_t,\eta_t)$ is a solution to system~\eqref{motioneq} in the time interval $[0,T]$
with initial datum $(\gamma_0(x),\eta_0(x))=(\gamma(0,x),\eta(0,x))$.
Thanks to Lemma~\ref{decreasing-in-time}
we have that $\partial_t E$ is nonpositive, so $E$ is decreasing in $t$, 
the maximum of the energy is attained at $t=0$
and for all $t\in(0,T]$ the energy of $(\gamma_t,\eta_t)$ is less or equal 
to the energy of the initial datum.
\end{proof}

\begin{remark}
If we combine the corollary with Lemma~\ref{bound-geo-quantities}
we also get a (uniform in time) bound on the $L^2$--norm of the curvature
of evolving curves $\gamma_t$
and  a (uniform in time) bound
on the $L^2$--norm of $\partial_s\eta_t$ and on $\diverg(\eta_t)$.
\end{remark}

\bibliographystyle{amsplain}
\bibliography{Brand-Dolzmann-Pluda}%

\end{document}